\renewcommand{\vartheta}{\theta}
\newcommand{\K}{\mathbb{K}}
\newcommand{\N}{\mathbb{N}}
\newcommand{\Z}{\mathbb{Z}}
\newcommand{\R}{\mathbb{R}}
\newcommand{\C}{\mathbb{C}}
\newcommand{\x}{\mathbf{x}}
\renewcommand{\leq}{\leqslant}
\newcommand{\Sym}{S}
\numberwithin{equation}{section}
\newtheorem{theorem}{Theorem}[section]
\newtheorem{lemma}[theorem]{Lemma}
\newtheorem{remark}[theorem]{Remark}
\newtheorem{corollary}[theorem]{Corollary}
\newtheorem{proposition}[theorem]{Proposition}
\newtheorem{definition}[theorem]{Definition}
\newtheorem{example}[theorem]{Example}
\newtheorem{conjecture}[theorem]{Conjecture}
\DeclareMathOperator{\sh}{sh}
\DeclareMathOperator{\Mon}{Mon}
\DeclareMathOperator{\Stab}{Stab}
\DeclareMathOperator{\len}{len}
\DeclareMathOperator{\spe}{sp}
\DeclareMathOperator{\wt}{wt}
\DeclareMathOperator{\BP}{BP}
\DeclareMathOperator{\cut}{cut}
\DeclareFontFamily{U}{mathx}{\hyphenchar\font45}
\DeclareFontShape{U}{mathx}{m}{n}{
      <5> <6> <7> <8> <9> <10>
      <10.95> <12> <14.4> <17.28> <20.74> <24.88>
      mathx10
      }{}
\DeclareSymbolFont{mathx}{U}{mathx}{m}{n}
\DeclareMathAccent{\widecheck}{0}{mathx}{"71}
\begin{document}

\title[]
{
The poset of Specht ideals for  hyperoctahedral groups
}

\author{Sebastian Debus}
\address{Fakultät für Mathematik, Otto-von-Guericke-Universität Magdeburg, 39106 Magdeburg, Germany}
\email{sebastian.debus@ovgu.de}

\author{Philippe Moustrou}
\address{
Institut de Mathématiques de Toulouse, UMR 5219, UT2J, 31058
Toulouse, France}
\email{philippe.moustrou@math.univ-toulouse.fr}

\author{Cordian Riener}
\address{Department of Mathematics and Statistics, UiT - the Arctic University of Norway, 9037 Troms\o, Norway}
\email{cordian.riener@uit.no}

\author{Hugues Verdure}
\address{
Department of Mathematics and Statistics, UiT - the Arctic University of Norway, 9037 Troms\o, Norway}
\email{hugues.verdure@uit.no}

\keywords{bipartitions, Specht polynomials, hyperoctahedral group, invariant ideals}
\thanks{This work has been supported by European Union's Horizon 2020 research and innovation programme under the Marie Sk\l{}odowska-Curie grant agreement 813211 (POEMA), the Troms\o~ Research foundation grant agreement 17matteCR and by the project Pure Mathematics in Norway, funded by Trond Mohn Foundation and Tromsø Research Foundation}

\begin{abstract}
Specht polynomials classically realize the irreducible representations of the symmetric group. The ideals defined by these polynomials provide a strong connection with the combinatorics of Young tableaux and have been intensively  studied by several authors. We initiate similar investigations for the ideals defined by the Specht polynomials associated to the hyperoctahedral group $B_n$. We introduce a bidominance order on bipartitions which describes the poset of inclusions of these ideals and study algebraic consequences on general $B_n$-invariant ideals and varieties, which can lead to computational simplifications.    

\end{abstract}
\maketitle
\markboth{S.~Debus, P.~Moustrou, C.~Riener, and H.~Verdure}{Specht ideals for the hyperoctahedral group}

\section{Introduction}

Symmetries provide beautiful connections between algebra, geometry and efficient computations: on the one hand, the symmetries of geometrical objects can be described with the algebraic language of group theory, while on the other hand algebraic problems affording additional structure can be solved more efficiently once symmetry is appropriately taken into consideration.  
A particular incarnation of these phenomena occurs when studying algebraic systems of polynomial equations whose solution set is invariant under a group action. In this set-up, when looking at the corresponding polynomial ideal, the machinery of invariant and representation theory can be employed to gain information about the solutions of the initial system, and to simplify its resolution.

\medskip

These kinds of questions have been extensively studied in the literature for the \emph{symmetric group} $\mathcal{S}_n$, acting on the polynomial ring $\K[\mathbf{x}_1, \ldots, \mathbf{x}_n]$ over a field $\K$ by permuting variables. In particular, it has been observed in different computational tasks that the understanding of this action can lead to substantial algorithmic improvements (see for example \cite{computing1,computing2,computing3,computing4,computing5,computing6,computing7,computing8}). These improvements mostly build on the fact that in this situation, both representation and invariant theory are classically understood, and are closely related to the combinatorics of partitions and Young Tableaux.
More precisely, the irreducible representations of $\mathcal{S}_n$ are in bijection with the partitions of $n$, through a construction due to Specht: for every partition, one can define a polynomial whose $\mathcal{S}_n$-orbit spans an irreducible $\mathcal{S}_n$-module, called \emph{Specht module} \cite{specht1937darstellungstheorieSn}.
This motivates the study of \emph{Specht ideals}, the ideals generated by such modules, since they can be seen as building blocks of the action of the symmetric group on a polynomial ring.
The study of these objects has shown to be fruitful from various aspects, and the connection between these ideals and the combinatorics of partitions turns out to be deeper: not only there is a bijection between $\mathcal{S}_n$-Specht ideals and partitions of $n$, but this correspondence respects the poset structures. First results were proven by Li and Li \cite{li1981independence} and Haiman and Woo (see Woo's doctoral thesis \cite{woo2005ideals}), and then independently revisited and extended for algorithmic purposes in \cite{moustrou2021symmetric}.
In turn, this combinatorial understanding also provides information on these ideals from the point of view of commutative algebra: for instance they all are radical \cite{murai2021note}, and the partitions for which they are Cohen-Macaulay are understood \cite{yanagawa2021specht}.
The study of these ideals has also paved algorithmic ways to simplify calculations for $\mathcal{S}_n$-closed ideals and their corresponding varieties. They allow to understand the symmetry of the coordinates of the points in the variety, which in turn gives information on their dimension. This information can then be used to design more efficient algorithms by reducing the number of variables.

\medskip

In this article, we initiate a similar study for the  action of the \emph{hyperoctahedral group $B_n$} on a polynomial ring $\K[\mathbf{x}_1, \ldots, \mathbf{x}_n]$. The field $\K$ is assumed to be of characteristic $0$, although many results remain valid in positive characteristic. In this representation, this group can be seen as the group generated by permutations of variables and sign switches of variables, namely maps sending $\mathbf{x}_i$ to $-\mathbf{x}_i$. The group is isomorphic to the Weyl group of type $B$ and appears in several different areas, as hyperplane arrangements (\cite[Section 6.7]{aguiar2017topics},\cite{abe2020hessenberg}), representation theory \cite{chen1993derangements,chen1993induced,musili1993representations}, and has applications in the study of non negative even symmetric polynomials \cite{choi1987even,harris1999real} and optimization \cite{dostert2017new}.
Similarly to the case of permutations, this situation is profoundly connected to combinatorics. 
In this case, instead of partitions, the irreducible representations of $B_n$ are in bijection with \emph{bipartitions} of $n$. Furthermore, polynomial generators of the irreducible $B_n$-modules can be constructed in a similar way \cite{specht1937darstellungstheorieBn,musili1993representations}. We aim at a first investigation of the corresponding ideals  with the goal to extend the connections between algebra and combinatorics as far as possible. In contrast to the $\mathcal{S}_n$-case where there is a natural order on partitions, several orders are possible on bipartitions \cite{geissinger1978representations,al1981representations,dipper1995hecke}. However, while in the $\mathcal{S}_n$ case the poset of the standard order on partitions reflects the corresponding poset of ideal inclusions, none of the previously studied orders of bipartitions  satisfy this property. Therefore, we define another order on bipartitions. After studying the basic properties of this order, we are able to show that it indeed translates well to the ideal inclusion. Similarly to the case of $\mathcal{S}_n$, this combinatorial connection finds consequences for the corresponding varieties. In addition to the inclusion of varieties we are able to give a complete characterization in terms of orbit types of the points in these varieties. Further, this gives information on the possible orbit types of points in general $B_n$-invariant varieties, allowing for complexity reduction in the resolution of $B_n$-closed polynomial systems. 

\medskip

The paper is structured as follows: Section \ref{sec:Sn} overviews the situation of $\mathcal{S}_n$-Specht ideals. We initiate the study of $B_n$-Specht ideals in Section \ref{sec:Bn} with definitions and natural connections to the $\mathcal{S}_n$ case. In Section \ref{sec:posetbipartitions} we define our order for bipartitions and study its combinatorial properties. Following this, we show equivalence between our poset of bipartitions and the posets of Specht ideals and varieties in Section \ref{sec:inclusion}. In Section \ref{sec:varieties},  study possible decompositions of Specht varieties in terms of orbit types. Finally, we extend our study to general $B_n$-invariant ideals in Section \ref{sec:genideals}, before concluding the paper with closing remarks and open questions in Section \ref{sec:ciao}.

\section{\texorpdfstring{$\mathcal{S}_n$}{Sn}-Specht ideals}\label{sec:Sn}

\subsection{Definitions}
A \textit{partition} $\lambda=(\lambda_1,\ldots,\lambda_l)$ of $n$ is a sequence of non-increasing non negative integers such that $\sum_{i\geq1} \lambda_i = n$. 
We write $\lambda \vdash n$ when $\lambda$ is a partition of $n$ and say that $\emptyset$ is the unique partition of $0$. 
The \textit{size} of a partition $\lambda$ is $\mid\lambda\mid =\sum_{i\geq 1} \lambda_i$. 
The \textit{length} of a non-empty partition $\lambda=(\lambda_1,\lambda_2,\ldots) \vdash n$ is the maximal $l \in \N_0 := \N \uplus \{0\}$ with $\lambda_l > 0$, while the length of $\emptyset$ is $0$. We denote the length by $\len (\lambda)$. For any partition $\lambda=(\lambda_1,\ldots,\lambda_l) \vdash n$ we use the convention that $\lambda_s=0$ for every $s\geqslant l+1$. 

Let $\lambda,\mu \vdash n$ be partitions of the same size. Then $\lambda$ \textit{dominates} $\mu$ if and only if $\sum_{j=1}^k \lambda_j \geq \sum_{j=1}^k \mu_j$ for any $k$. We denote domination by $\mu \unlhd \lambda$.
A partition $\lambda$ can be represented via its \textit{(Young) diagram}, i.e., the ordered sequence of boxes from the left to the right and the top to the bottom, where the $i$-th line contains $\lambda_i$ many boxes. We say that the associated diagram has \textit{shape} $\lambda$. A \textit{tableau} of shape $\lambda$ is a filling of a diagram of shape $\lambda$ with all the numbers $[n] = \{1,\ldots,n\}$. Then, we write $\sh (T) = \lambda$ if $T$ is a tableau of shape $\lambda$.
For instance, $S=  \ytableausetup{smalltableaux}
\begin{ytableau}
9 & 3 & 6 & 4 \\
2 & 1 & 8\\ 
5 & 7
\end{ytableau}$ is a tableau of shape $(4,3,2)$. A \textit{generalized} tableau is a filling of a diagram with elements in $\K$. The \textit{conjugate partition} $\lambda^\perp$ of a partition $\lambda$ is the partition whose diagram is the one obtained from the diagram of $\lambda$ by interchanging the rows and columns. 

For a sequence $(i_1,\ldots,i_m)$ of natural numbers, we define the associated \textit{Vandermonde polynomial} in the variables $\mathbf{x}_{i_1},\ldots,\mathbf{x}_{i_m}$ as \[ \Delta_{(i_1,\ldots,i_m)}(\mathbf{x}) = \prod_{j < k \in  [m]}(\mathbf{x}_{i_j}-\mathbf{x}_{i_k}), \] while $\Delta_{(i)} = \prod_\emptyset (\mathbf{x}_{i_j}-\mathbf{x}_{i_k}) =1$.

\begin{definition}
Let $T$ be a tableau of shape $\lambda \vdash n$ with $m$ columns and let $T_i$ be the sequence of natural numbers containing the entries of the $i$-th column of $T$ from above to below. Then, the associated $\mathcal{S}_n$ \emph{Specht polynomial} $\spe_T(\mathbf{x})$ is the product of all the column Vandermonde polynomials of the columns, i.e., \[ \spe_{T}(\mathbf{x}) = \prod_{j=1}^m \Delta_{T_j}.\]
\end{definition}

For the tableau $S$ of shape $(4,3,2)$ above, we have 
\begin{align*}
\spe_{S}(\mathbf{x})&=\Delta_{(9,2,5)}(\x)\Delta_{(3,1,7)}(\x)\Delta_{(6,8)}(\x)\Delta_{(4)}(\x) 
\\ &=(\mathbf{x}_9-\mathbf{x}_2)(\mathbf{x}_9-\mathbf{x}_5)(\mathbf{x}_2-\mathbf{x}_5)(\mathbf{x}_3-\mathbf{x}_1)(\mathbf{x}_3-\mathbf{x}_7)(\mathbf{x}_1-\mathbf{x}_7)(\mathbf{x}_6-\mathbf{x}_8).
\end{align*}

\begin{definition} \label{def:Sn Specht}
Let $\lambda$ be a partition of $n$. We define the $\mathcal{S}_n$-\emph{Specht ideal} \[I_{\lambda} = \langle \spe_{T} (\mathbf{x}) : T \mbox{ is a tableau of shape } \lambda \rangle \subset \K[\mathbf{x}_1,\ldots,\mathbf{x}_n]\] and the $\mathcal{S}_n$-\emph{Specht variety} 
\[ V_{\lambda} = \{ a \in \K^n : f(a) = 0 \mbox{ for all } f \in I_{\lambda} \} \subset \K^n\]
 associated to $\lambda$.
 \end{definition}
 
 The group $\mathcal{S}_n$ acts transitively on the set of tableaux of shape $\lambda$, where an element $\sigma \in \mathcal{S}_n$ acts on a tableau $T$ by replacing every entry $i$ in a box by $\sigma (i)$. Thus, the $\mathcal{S}_n$-Specht ideal $I_{\lambda}$ is the ideal generated by the $\mathcal{S}_n$-orbit of a Specht polynomial of a tableau of shape $\lambda$.

\begin{definition}
For a partition $\lambda \vdash n$ we write $\mathcal{S}_\lambda = \mathcal{S}_{\lambda_1} \times \mathcal{S}_{\lambda_2} \times \cdots \subset \mathcal{S}_n$ and define the \emph{$\mathcal{S}_n$-orbit set} $H_\lambda = \{ z \in \K^n : \Stab_{\mathcal{S}_n}(z) \simeq \mathcal{S}_\lambda\}$. If $z \in H_\lambda$ we call $\lambda$ the \emph{$\mathcal{S}_n$-orbit type} of $z$.
\end{definition}

The orbit set of any partition is non-empty and the $H_\lambda$'s define a set partition of $\K^n$. For instance, $H_{(3,2,2,1)}$ is the $\mathcal{S}_n$ orbit of the set \[\{(a_1,a_1,a_1,a_2,a_2,a_3,a_3,a_4) \in \K^n  : a_i \neq a_j, ~\forall i \neq j\}.\]

\subsection{Inclusions and applications}

The dominance order for integer partitions is well studied and understood. We recall that if $(P,\preccurlyeq)$ is a poset and $p,q\in P$ then $p$ \textit{covers} $q$ if and only if $p \neq q$, $ q \preccurlyeq p$, and for any $r \in P$, $q \preccurlyeq r \preccurlyeq p$ implies $r \in \{p,q\}$.
Brylawski studied the lattice of integer partitions of $n$ with respect to the dominance order and classified the covering relations (\cite[Proposition 2.3]{brylawski1973lattice}). Let $\lambda,\mu \vdash n$ be partitions. 
Then, $\mu \unlhd \lambda$ is a covering if and only if $\lambda$ is of the form 
\begin{align*}
    \lambda= (\mu_1,\ldots,\mu_{i-1},\mu_{i}+1,\mu_{i+1},\ldots,\mu_{j-1},\mu_{j}-1,\mu_{j+1},\ldots,\mu_l),
\end{align*}
and either $j=i+1$ or $\mu_i = \mu_{j-1}$ 
(and $\mu_{i-1} > \mu_i$ and $\mu_{j} > \mu_{j+1}$ to ensure that $\mu$ is a partition). In particular, the diagram of shape $\mu$ can be obtained from the diagram of shape $\lambda$ via moving one box from the end of row $i$ to row $j$.
\begin{example}
The following are two coverings of partitions displayed by their diagrams. 
 \begin{center}
 \ytableausetup{smalltableaux}
 \begin{ytableau}
*(white) & *(white) & *(red) \\
*(white) & *(white) \\
*(white) & *(white) 
\end{ytableau} 
$~~\unrhd ~~$
 \ytableausetup{smalltableaux}
 \begin{ytableau}
*(white) & *(white) \\
*(white) & *(white) \\
*(white) & *(white) \\
 *(red)
\end{ytableau} \quad and \quad \ytableausetup{smalltableaux}
 \begin{ytableau}
*(white) & *(white) & *(white) & *(red) \\
*(white)   \\
*(white) 
\end{ytableau} 
$~~\unrhd ~~$
 \ytableausetup{smalltableaux}
 \begin{ytableau}
*(white) & *(white) & *(white) \\
*(white) & *(red) \\
*(white)  
\end{ytableau}~.
\end{center}
\end{example}
The following theorem shows the equivalences of the posets of partitions with respect to dominance order, and the posets of $\mathcal{S}_n$-Specht ideals and varieties with respect to inclusion.
\begin{theorem}[\cite{moustrou2021symmetric},~Theorem~1]\label{thm:equivalenceSn}
Let $\lambda$ and $\mu$ be partitions of $n$. Let $I_{\lambda}, I_{\mu}$ denote their associated $\mathcal{S}_n$-Specht ideals and $V_{\lambda}, V_{\mu}$ their associated $\mathcal{S}_n$-Specht varieties. Then, the following assertions are equivalent:
\begin{enumerate}
    \item\label{equivalence_aSn} The partition $\lambda$ dominates $\mu$, i.e. $\lambda \unrhd \mu$;
    \item\label{equivalence_bSn} The $\mathcal{S}_n$-Specht ideal $I_{\lambda}$ contains the $\mathcal{S}_n$-Specht ideal $I_{\mu}$, i.e. $I_{\lambda} \supset I_{\mu}$;
    \item\label{equivalence_cSn} The $\mathcal{S}_n$-Specht variety $V_{\lambda}$ is contained in the $\mathcal{S}_n$-Specht variety $V_{\mu}$, i.e. $V_{\lambda} \subset V_{\mu}$.
    \end{enumerate}
\end{theorem}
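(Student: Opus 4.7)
The plan is to establish the equivalence via the cyclic chain $(\ref{equivalence_aSn}) \Rightarrow (\ref{equivalence_bSn}) \Rightarrow (\ref{equivalence_cSn}) \Rightarrow (\ref{equivalence_aSn})$. The implication $(\ref{equivalence_bSn}) \Rightarrow (\ref{equivalence_cSn})$ is immediate from the general fact that an inclusion of ideals reverses on the corresponding vanishing loci, so the real work is concentrated in the other two implications.

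For $(\ref{equivalence_aSn}) \Rightarrow (\ref{equivalence_bSn})$, I would combine transitivity of ideal inclusion with Brylawski's description of cover relations, recalled just before the theorem, to reduce to a single covering step $\mu \lhd \lambda$. In that case the Young diagrams of $\mu$ and $\lambda$ differ in exactly two columns $c<c'$: $\lambda$ has one additional box at the bottom of column $c'$ and one fewer box at the bottom of column $c$ compared to $\mu$. The task is then to show that every Specht polynomial $\spe_T$ with $T$ of shape $\mu$ lies in $I_\lambda$. For this, I plan to invoke a Garnir/Laplace-type expansion identity for Vandermonde determinants: given a well-chosen tableau $T'$ of shape $\lambda$ whose columns outside $\{c,c'\}$ agree with those of $T$, one can expand the product of the two ``moving'' column Vandermondes as an alternating sum over transfers of one entry from the longer column to the shorter one. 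Rearranging this identity exhibits $\spe_T$ as a $\K[\x]$-linear combination of elements in the $\mathcal{S}_n$-orbit of Specht polynomials of shape $\lambda$, and hence as an element of $I_\lambda$. Carrying out this combinatorial identity cleanly for arbitrary $T$, $c$, $c'$ is the main technical difficulty.

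For $(\ref{equivalence_cSn}) \Rightarrow (\ref{equivalence_aSn})$, I would rely on the following description of $V_\lambda$ in terms of orbit sets: $H_\nu\subset V_\lambda$ if and only if $\lambda\not\unrhd \nu$, so that $V_\lambda = \bigsqcup_{\nu : \lambda\not\unrhd\nu} H_\nu$. Indeed, for $z\in H_\nu$, some $\spe_T(z)$ with $T$ of shape $\lambda$ is nonzero exactly when one can fill the Young diagram of $\lambda$ with the coordinates of $z$ so that every column contains pairwise distinct values; this is equivalent to the existence of a $\{0,1\}$-matrix with row sums $\nu$ and column sums $\lambda^\perp$, which by the Gale--Ryser theorem happens if and only if $\nu\unlhd (\lambda^\perp)^\perp=\lambda$, i.e.\ $\lambda\unrhd\nu$. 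Granted this characterization and $V_\lambda\subset V_\mu$: taking $\nu = \mu$ yields $H_\mu\cap V_\mu = \emptyset$. If $\lambda\not\unrhd\mu$ held, then $H_\mu\subset V_\lambda\subset V_\mu$, contradicting this disjointness. Hence $\lambda\unrhd\mu$, which is $(\ref{equivalence_aSn})$.
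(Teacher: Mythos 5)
Your overall architecture is sound, and your treatment of $(\ref{equivalence_cSn}) \Rightarrow (\ref{equivalence_aSn})$ is both correct and genuinely different from the route taken in \cite{moustrou2021symmetric} (which this paper only cites; its own $B_n$-analogue in Section~\ref{sec:inclusion} mirrors that route). There, the implication is obtained from an explicit witness point of orbit type $\mu$ together with a pigeonhole count on where repeated coordinates can sit in the columns (cf.\ Lemma~\ref{lem:z1} and its $B_n$ refinement Lemma~\ref{lem:z2}), and the full orbit decomposition of Theorem~\ref{thm:Sn orbit decomposition} is deduced \emph{afterwards} as a corollary. You instead derive the decomposition $V_\lambda=\bigcup_{\nu\not\unlhd\lambda}H_\nu$ up front from Gale--Ryser (the translation ``column-distinct filling $\Leftrightarrow$ $\{0,1\}$-matrix with row sums $\nu$, column sums $\lambda^\perp$'' is correct) and then conclude by disjointness. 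This is a clean shortcut; the only point you should make explicit is that $H_\mu\neq\emptyset$, which is immediate since $\K$ is infinite.

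The genuine gap is in $(\ref{equivalence_aSn}) \Rightarrow (\ref{equivalence_bSn})$, which is the heart of the theorem and which you leave at the level of ``invoke a Garnir/Laplace-type expansion.'' As literally described --- expanding $\Delta_C\Delta_D$ (columns of sizes $p\geq q+2$) as an alternating sum over single-entry transfers from the long column to the short one --- the identity is false: that alternating sum is exactly a Garnir relation and vanishes identically (already for $p=2$, $q=0$ one gets $\Delta_{(u_2)}\Delta_{(u_1)}-\Delta_{(u_1)}\Delta_{(u_2)}=0$, not $\x_{u_1}-\x_{u_2}$). So the Specht polynomial of $\mu$ is \emph{not} a signed sum of products of the rearranged column Vandermondes; one must insert nontrivial polynomial cofactors. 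This is precisely how the paper handles the analogous $B_n$ step in Proposition~\ref{prop:Specht Ideals inclusion}: the smaller Specht polynomial is written as $\sum_{\sigma\in G/H}\epsilon(\sigma)\sigma\bigl(\tilde{Q}\bigr)$ where $\tilde{Q}$ is a \emph{monomial multiple} of a Specht polynomial of the larger shape (equivalently, factors such as $\prod_{i\in A}R(\x_i^2)$ appear), and the identity is verified by showing the right-hand side is divisible by the target Vandermonde and comparing leading terms in the lexicographic order. Your reduction to Brylawski covers and to the two affected columns is fine, but without proving an identity of this corrected form (or citing the Garnir/straightening machinery in a form that yields ideal membership with $\K[\x]$-coefficients), the implication $(\ref{equivalence_aSn}) \Rightarrow (\ref{equivalence_bSn})$ remains unproved.
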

The $\mathcal{S}_n$-Specht varieties can be decomposed using $\mathcal{S}_n$ orbit sets.
\begin{theorem}[\cite{moustrou2021symmetric},~Corollary~1] \label{thm:Sn orbit decomposition}
Let $\mu \vdash n$ be a partition. Then, the associated $\mathcal{S}_n$-Specht variety is
\[ V_\mu = \left( \bigcup_{\lambda \unlhd \mu} H_\lambda \right)^c = \bigcup_{\lambda \not \unlhd \mu}H_\lambda.\]
\end{theorem}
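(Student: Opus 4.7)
The plan rests on the observation that the vanishing of any generator $\spe_T$ of $I_\mu$ at a point $z$ depends only on the equality pattern of the coordinates of $z$: since $\spe_T$ is a product of differences $\mathbf{x}_i - \mathbf{x}_j$, one has $\spe_T(z) = 0$ if and only if some column of $T$ contains indices $i,j$ with $z_i = z_j$. Combined with the $\mathcal{S}_n$-invariance of $V_\mu$, this shows that $V_\mu$ is a union of orbit sets $H_\lambda$. Since $\{H_\lambda\}_{\lambda \vdash n}$ partitions $\K^n$, the second equality in the statement is immediate, and it remains to determine which $H_\lambda$ lie inside $V_\mu$.

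For the direction $\lambda \unlhd \mu \Rightarrow H_\lambda \cap V_\mu = \emptyset$, I would invoke Theorem \ref{thm:equivalenceSn}, which yields $V_\mu \subseteq V_\lambda$, thereby reducing the task to proving $H_\lambda \cap V_\lambda = \emptyset$. For this, pick a canonical $z \in H_\lambda$ whose coordinate blocks $B_i = \{\lambda_1 + \cdots + \lambda_{i-1}+1, \ldots, \lambda_1 + \cdots + \lambda_i\}$ carry pairwise distinct values, and fill a tableau $T$ of shape $\lambda$ by placing $B_i$ along row $i$. Every column of $T$ then contains indices from distinct blocks of $z$, so every column Vandermonde evaluates to a nonzero product, giving $\spe_T(z) \neq 0$ and $z \notin V_\lambda$.

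The opposite implication $\lambda \not\unlhd \mu \Rightarrow H_\lambda \subseteq V_\mu$ is the combinatorial heart of the proof. Suppose for contradiction that some $z \in H_\lambda$ and some tableau $T$ of shape $\mu$ satisfy $\spe_T(z) \neq 0$, so that each column of $T$ contains indices from pairwise distinct blocks of $z$. Fix any $k$; each block $B_i$ of $z$ occurs at most once per column and at most $\lambda_i$ times in total, so contributes at most $\min(\lambda_i,k)$ cells to the union of the first $k$ columns of $T$. Counting cells in these columns gives
\[
\sum_{j=1}^{k} \mu_j^\perp \;\leq\; \sum_i \min(\lambda_i, k) \;=\; \sum_{j=1}^{k} \lambda_j^\perp,
\]
which is exactly the condition $\mu^\perp \unlhd \lambda^\perp$, equivalent to $\lambda \unlhd \mu$ by the standard conjugation identity on the dominance order, contradicting the hypothesis.

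The main obstacle is this last counting argument with its conjugate-partition reformulation; beyond it, the proof reduces to routine use of Theorem \ref{thm:equivalenceSn} and the orbit-type invariance observation.
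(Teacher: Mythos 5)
Your argument is correct. Note that the paper does not prove this statement itself --- it is quoted from \cite{moustrou2021symmetric} (Corollary 1) --- so the only comparison available is with the strategy the paper uses for the $B_n$ analogue (Lemma \ref{lem:z1}, Proposition \ref{prop:not in variety and dominance}, Theorem \ref{thm: specht varieties and orbit types}), and your proof mirrors exactly that strategy: the observation that membership in $V_\mu$ depends only on the equality pattern of the coordinates (so $V_\mu$ is a union of orbit sets), a canonical representative of $H_\lambda$ with a row-filled tableau showing $H_\lambda \cap V_\lambda = \emptyset$, the poset equivalence (Theorem \ref{thm:equivalenceSn}) to transfer this to all $\lambda \unlhd \mu$, and a pigeonhole count over the first $k$ columns giving $\sum_{j\leq k}\mu_j^\perp \leq \sum_i \min(\lambda_i,k) = \sum_{j\leq k}\lambda_j^\perp$, i.e.\ $\mu^\perp \unlhd \lambda^\perp$, hence $\lambda \unlhd \mu$, for the remaining inclusion. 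The counting step is precisely the content of the result the paper imports as \cite{moustrou2021symmetric}, Proposition 1(ii) in the proof of Lemma \ref{lem:z1}, so your writeup in effect supplies a self-contained proof of the cited statement; all steps, including the reduction to a canonical point via the equality-pattern observation and the conjugation identity for the dominance order, are sound.
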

This characterization already shows that in general $\K[\mathbf{x}_1,\ldots,\mathbf{x}_n]/I_\lambda$ is not Cohen-Macaulay for a $\mathcal{S}_n$-Specht ideal $I_\lambda$, since the varieties are not equidimensional. Yanagawa classified the few cases when a Specht ideal is Cohen-Macaulay.

\begin{theorem}[\cite{yanagawa2021specht},~Corollary~4.4]
The ring $\K[\mathbf{x}_1,\ldots,\mathbf{x}_n] /I_\lambda$ is Cohen-Macaulay if and only if $\lambda$ is one of the following form
\begin{enumerate}
    \item $\lambda = (n-d,1,\ldots,1)$;
    \item $\lambda = (n-d,d)$;
    \item $\lambda = (a,a,1)$.
\end{enumerate}
\end{theorem}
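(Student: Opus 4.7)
The plan is to combine Theorem \ref{thm:Sn orbit decomposition} with the radicality of Specht ideals (mentioned just above in the introduction) in order to reformulate the Cohen--Macaulay question as a geometric problem about the linear subspace arrangement $V_\lambda = \bigcup_{\mu \not\unlhd \lambda} H_\mu$. Each closure $\overline{H_\mu}$ is a union of linear subspaces of dimension $\len(\mu)$, one for each set partition of $[n]$ of shape $\mu$, so the intersection lattice of $V_\lambda$ is a sublattice of the partition lattice of $[n]$. This brings the standard subspace-arrangement CM machinery into play, whether through Goresky--MacPherson-type descriptions of the complement or through reduced homology of associated order complexes.

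For the ``only if'' direction, I would first extract the equidimensionality obstruction: by Theorem \ref{thm:Sn orbit decomposition}, $\dim V_\lambda = \max\{\len(\mu) : \mu \not\unlhd \lambda\}$, and whenever the irreducible components of $V_\lambda$ do not all attain this dimension Cohen--Macaulayness already fails. This eliminates many shapes in a single stroke. Those partitions that slip through equidimensionality, most notably $(a,a,b)$ with $b\geq 2$ and $(a,a,a)$, would be ruled out by a finer homological obstruction, namely by exhibiting a rank-interval in the intersection lattice whose order complex carries nontrivial reduced homology in intermediate degree, which violates the CM criterion for the arrangement.

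For the ``if'' direction, each of the three families requires a tailored argument. For hooks $\lambda = (n-d, 1^d)$ the Specht polynomials are the $(d+1)$-variable Vandermondes, so $V_\lambda$ is the locus of points with at most $d$ distinct coordinate values; the associated intersection lattice is the rank-$d$ truncation of the partition lattice of $[n]$, whose order complex is Cohen--Macaulay by classical shelling arguments. For two-row partitions $\lambda = (n-d, d)$, $I_\lambda$ coincides with a classical ideal generated by a single irreducible representation whose minimal free resolution is linear and explicitly known, giving CM directly. For $\lambda = (a,a,1)$, one would construct an explicit minimal free resolution, plausibly by induction on $a$ via a mapping-cone argument that grows the ideal one Specht-column at a time, exploiting the precise arithmetic $n = 2a+1$.

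The main obstacle will be the sufficiency for $\lambda = (a,a,1)$: geometrically, $V_{(a,a,1)}$ and $V_{(a,a,b)}$ for $b \geq 2$ are both unions of $(a+1)$-subspaces of the same general form, yet only $b=1$ yields a CM quotient, so the proof cannot come from the coarse shape of the arrangement alone and must isolate a combinatorial feature specific to $n = 2a+1$. A secondary difficulty is the ``only if'' direction for the equidimensional non-CM cases, where locating the precise non-vanishing middle homology in the intersection lattice demands careful combinatorial bookkeeping.
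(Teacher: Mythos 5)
The paper does not prove this statement at all: it is imported verbatim from Yanagawa (\cite{yanagawa2021specht}, Corollary~4.4) as background on the $\mathcal{S}_n$ side, so there is no in-paper argument to compare yours against; your proposal must stand on its own, and as it stands it is a plan with a broken load-bearing step. For the hook case $\lambda=(n-d,1,\ldots,1)$ you deduce Cohen--Macaulayness of $\K[\mathbf{x}_1,\ldots,\mathbf{x}_n]/I_\lambda$ from shellability (hence CM-ness) of the intersection lattice of the subspace arrangement $V_\lambda$. For subspace arrangements this inference is simply false: take two $2$-planes in $\K^4$ meeting only at the origin; the proper part of the intersection lattice is a $0$-dimensional complex, trivially Cohen--Macaulay, yet the coordinate ring of the union has depth $1<2$ and is not Cohen--Macaulay. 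There is no Reisner-type criterion that converts CM-ness of (rank intervals of) the intersection lattice into CM-ness of the ring once the subspaces are not coordinate subspaces, and the hook case is precisely the ``at most $d$ distinct coordinates'' arrangement whose CM property is a genuinely hard theorem in the literature (deduced from work of Etingof, Gorsky and Losev on rational Cherednik algebras, and taken as an input by Yanagawa), not something recoverable from a classical shelling of the truncated partition lattice. The same objection undercuts your proposed ``finer homological obstruction'' in the only-if direction: you would need a valid criterion linking intermediate lattice homology to non-CM-ness of the ring, and none is available at this level of generality --- as you yourself observe, $(a,a,1)$ and $(a,a,b)$ with $b\ge 2$ give arrangements of the same coarse shape but opposite CM behaviour, which is exactly the phenomenon a lattice-only argument cannot see.

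The pieces you do get right are the easy ones: radicality together with Theorem~\ref{thm:Sn orbit decomposition} identifies $V_\lambda$ set-theoretically, and non-equidimensionality correctly rules out most shapes, since a graded Cohen--Macaulay quotient is equidimensional. But everything past that is deferred: the ``classical ideal with an explicitly known linear resolution'' claimed for the two-row case $(n-d,d)$ is never identified (in the literature this case required a separate argument of Watanabe and Yanagawa), the sufficiency for $(a,a,1)$ is left as a hoped-for mapping-cone induction with no candidate resolution, and the necessity for the equidimensional survivors such as $(a,a,b)$, $b\ge 2$, is left as ``bookkeeping.'' Those deferred items are the actual content of the theorem, so the proposal does not yet constitute a proof of either direction beyond the equidimensionality reduction.
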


The authors in \cite{woo2005ideals,murai2021note} prove that a $\mathcal{S}_n$-Specht ideal is radical. Their proof uses Theorem \ref{thm:Sn orbit decomposition}, i.e., that $\mathcal{S}_n$-Specht varieties can be written as disjoint unions of $\mathcal{S}_n$ orbit sets, and the non-emptyness of any orbit set $H_\lambda.$

\section{Definition and first properties of \texorpdfstring{$B_n$}{Bn}-Specht ideals}\label{sec:Bn}

The \textit{hyperoctahedral group} $B_n$ is the symmetry group of the $n$-dimensional hypercube. It can be written as the wreath product $ \mathcal{S}_2 \wr \mathcal{S}_n \simeq \{ \pm 1\}^n \ltimes \mathcal{S}_n$,  acting on the polynomial ring $\mathbb{K}[\mathbf{x}_1,\ldots,\mathbf{x}_n]$ in the following natural way. An element $(\tau,\rho) \in B_n$, where $\tau \in \{ \pm 1\}^n \cong \mathcal{S}_2^n$ and $\rho \in \mathcal{S}_n$, acts on a monomial $\mathbf{x}_i$ by $(\tau,\rho)\cdot \mathbf{x}_i = \tau_{\rho(i)} \cdot \mathbf{x}_{\rho (i)}$. 
We call an ideal $I \subset \mathbb{K}[\mathbf{x}]$ \textit{$B_n$-invariant} if for all $f \in I$ and all $\sigma \in B_n$ we have $\sigma \cdot f \in I$.

\subsection{Definitions}
 A \textit{bipartition} of $n$ is a pair $(\lambda,\mu)$, where $\lambda \vdash n_1$, $\mu \vdash n_2$ are partitions and $n_1 + n_2 = n$. We denote the set of all bipartitions of $n$ by $\BP_n$. A \textit{(Young) bidiagram} of a bipartition $(\lambda,\mu)$ is the pair of diagrams of shape $\lambda$ and $\mu$. A \textit{bitableau} is a filling of a bidiagram with all the numbers in $[n]$. We write $\sh (T,S) = (\lambda,\mu)$ if $(T,S)$ is a bitableau of shape $(\lambda,\mu)$.
For example, $(T',S')= \left( \ytableausetup{smalltableaux}
\begin{ytableau}
4 & 3 \\
2 \\ 
5
\end{ytableau} ~~~, ~~~
\ytableausetup{smalltableaux}
\begin{ytableau}
 6 \\ 1 \\ \none
\end{ytableau} \right)$ is a bitableau of shape $((2,1,1),(1,1))$.  When we consider representatives of $B_n$-orbits of points, we do not need to distinguish between the signs of coordinates. Thus, we write $\mathbf{x}^2 = (\mathbf{x}_1^2,\ldots,\mathbf{x}_n^2)$ and analogously $z^2=(z_1^2,\ldots,z_n^2)$ for points $z \in \K^n$. A \textit{generalized} bitableau is a filling of a bidiagram with elements in $\K$.

\begin{definition}
Let $(T,S)$ be a bitableau and let $T_i,\mathcal{S}_i$ be the sequences of natural numbers containing the entries of the $i$-th column of $T$ and $S$ from above to below. Then, the associated $B_n$ \emph{Specht polynomial} is 
\[\spe_{(T,S)}(\mathbf{x}) \quad = \quad \spe_{T}(\mathbf{x}^2) \spe_{S} (\mathbf{x}^2) \prod_{k \in S} \mathbf{x}_k \quad = \quad
\prod_{i\geq 1} \Delta_{T_i}(\mathbf{x}^2)\prod_{j \geq 1}  \Delta_{\mathcal{S}_j}(\mathbf{x}^2)  \prod_{k \in S} \mathbf{x}_k   \]
\end{definition}
\noindent where the notation $\spe_T$ is naturally adapted in this context to a Tableau $T$ which is not necessarily filled with the integers $1,\ldots,k$.
For the bitableau $(T',S')$ of shape $((2,1,1),(1,1))$ above, we have \[ \spe_{(T',S')}(\mathbf{x})=\Delta_{(4,2,5)}(\mathbf{x}^2)\cdot \Delta_{(3)}(\x^2)\cdot \Delta_{(6,1)}(\mathbf{x}^2)\cdot \mathbf{x}_6 \mathbf{x}_1 =  (\mathbf{x}^2_4-\mathbf{x}^2_2)(\mathbf{x}^2_4-\mathbf{x}^2_5)(\mathbf{x}^2_2-\mathbf{x}^2_5)(\mathbf{x}^2_6-\mathbf{x}^2_1)\mathbf{x}_1\mathbf{x}_6.\] 
The $B_n$ Specht polynomials are defined  in works of Specht \cite{specht1937darstellungstheorieBn} who proved that the vector space of all $B_n$ Specht polynomials associated with a fixed bipartition is an irreducible $B_n$-representation if the characteristic is $0$. Moreover, for pairwise different bipartitions the associated irreducible representations are non-isomorphic and all irreducible representations arise as the vector spaces of certain $B_n$ Specht polynomials. A constructive proof can also be found in~\cite[Chapter 7]{musili1993representations}.

From now on, if not specified, Specht polynomials will stand for $B_n$-Specht polynomials.

\begin{definition} \label{def:Bn Specht}
Let $(\lambda,\mu)$ be a bipartition of $n$. We define the $B_n$-\emph{Specht ideal} \[I_{(\lambda,\mu)} = \langle \spe_{(T,S)} (\mathbf{x}) : (T,S) \mbox{ is a bitableau of shape } (\lambda,\mu) \rangle \subset \K[\mathbf{x}_1,\ldots,\mathbf{x}_n]\] and the $B_n$-\emph{Specht variety} 
\[ V_{(\lambda,\mu)} = \{ z \in \K^n : f(z) = 0 \mbox{ for all } f \in I_{(\lambda,\mu)} \} \subset \K^n\]
 associated to $(\lambda,\mu)$. 
 \end{definition}

Again, the $B_n$-Specht ideal $I_{(\lambda,\mu)}$ is the ideal generated by the $\mathcal{S}_n$ orbit of a Specht polynomial of a bitableau of shape $(\lambda,\mu)$. We observe that switching of signs of variables in a Specht polynomial $\spe_{(T,S)}$ returns $\pm \spe_{(T,S)}$. Thus, $I_{(\lambda,\mu)}$ also equals the $B_n$ orbit of $\spe_{(T,S)}$.

\subsection{Relations between \texorpdfstring{$B_n$}{Bn} and \texorpdfstring{$\mathcal{S}_n$}{Sn} Specht polynomials}
\begin{definition}
Let $\lambda \vdash n_1,\mu \vdash n_2$ be partitions. Then, the \emph{glueing} of $\lambda$ and $\mu$ is the partition $\lambda \uplus \mu = (\lambda_1+\mu_1,\lambda_2+\mu_2,\ldots) \vdash n_1 + n_2$. The \emph{concatenation} $\lambda \vee \mu \vdash n_1+n_2$ is the partition obtained by rearranging $(\lambda_1,\ldots,\lambda_s,\mu_1,\ldots,\mu_t)$ in decreasing order.
\end{definition}
The glueing $\lambda \uplus \mu$ defines indeed again a partition. Since $\lambda_i \geq \lambda_{i+1}$ and $\mu_i \geq \mu_{i+1}$ we have $\lambda_i+\mu_i \geq \lambda_{i+1} + \mu_{i+1}$ for any $i$.

\begin{example}
The glueing of the partitions $ (3,2,2),(4,1)$ with diagrams \[
\begin{ytableau}
*(white) & *(white) & *(white)\\
*(white)  & *(white) \\ 
*(white) & *(white)
\end{ytableau} ~~~, ~~~
\ytableausetup{smalltableaux}
\begin{ytableau}
 *(white) & *(white)  & *(white) & *(white) \\ *(white) \\ \none
\end{ytableau}~\] is the partition $(7,3,2)$ with diagram 
\[\begin{ytableau}
*(white) & *(white) & *(white) & *(white) & *(white) & *(white) & *(white) \\
*(white) & *(white) &*(white)  \\ 
*(white) & *(white)
\end{ytableau}.\] 
\end{example}

As observed in~\cite{hall1959algebra}, if $\lambda,\mu$ are two partitions, then $(\lambda \uplus \mu)^\perp = \lambda^\perp \vee \mu^\perp$. This observation provides a natural connection between bitableaux of shape $(\lambda, \mu)$ and tableaux of shape $\lambda \uplus \mu$.
Concretely, let $(T,S)$ be a bitableau of shape $(\lambda,\mu)$. Then, we can consider the tableau $T \uplus S$ of shape $\lambda \uplus \mu$, where the columns of $T \uplus S$ are filled like the columns of $T$ and $S$. When two columns in $\lambda \uplus \mu$ have the same length, they are ordered by their occurrence in the bitableau $(T,S)$ from the left to the right. For instance, for $(T^*,S^*)=  \left( \ytableausetup{smalltableaux}
\begin{ytableau}
1 & 2 & 10 & 9 \\
4 & 8 & 7 \\ 
6
\end{ytableau} ~~~, ~~~
\ytableausetup{smalltableaux}
\begin{ytableau}
 3 \\ 5 \\ \none
\end{ytableau}\right)$ we have $T^* \uplus S^* = \ytableausetup{smalltableaux}
\begin{ytableau}
1 & 2 & 10 & 3 & 9 \\
4 & 8 & 7 & 5  \\ 
6
\end{ytableau}~.$ Since this map is invertible we get:

\begin{proposition} \label{prop:gluing of tableau}
The tableaux of shape $\lambda \uplus \mu$ are in 1:1 correspondence with the bitableaux of shape $(\lambda,\mu)$. A bijection is given by $(T,S) \mapsto T \uplus S$. 
\end{proposition}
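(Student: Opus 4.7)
The plan is to exhibit an explicit inverse for the map $\Phi \colon (T,S) \mapsto T \uplus S$ and check that both compositions equal the identity. Three things need to be verified: (i) $\Phi$ is well-defined, in the sense that $T \uplus S$ genuinely has shape $\lambda \uplus \mu$; (ii) there is a natural map $\Psi$ in the other direction; and (iii) $\Psi \circ \Phi = \mathrm{id}$ and $\Phi \circ \Psi = \mathrm{id}$.

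The key observation driving the whole argument is the identity $(\lambda \uplus \mu)^\perp = \lambda^\perp \vee \mu^\perp$ recalled just before the statement: the multiset of column lengths of $\lambda \uplus \mu$ is precisely the multiset union of the column lengths of $\lambda$ and of $\mu$. This immediately settles (i), since $\Phi$ concatenates the columns of $T$ and $S$ and stably reorders them by decreasing length, ties being broken by giving precedence to columns of $T$, as specified in the paragraph preceding the statement. The resulting column-length profile then matches that of $\lambda \uplus \mu$, and no box is created or destroyed, so the entries still fill the diagram with $[n]$ exactly once.

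For (ii), given a tableau $U$ of shape $\lambda \uplus \mu$, I would read off, for each length $\ell$, the numbers $a_\ell$ and $b_\ell$ of columns of length $\ell$ in $\lambda$ and $\mu$ respectively. Since the columns of $U$ of length $\ell$ form a contiguous block of size $a_\ell + b_\ell$, I would declare the first $a_\ell$ of them (in the order induced by $U$) to be the columns of a tableau $T$ of shape $\lambda$, and the remaining $b_\ell$ to be the columns of a tableau $S$ of shape $\mu$. Setting $\Psi(U) := (T,S)$ yields a well-defined bitableau of shape $(\lambda,\mu)$, again by the same identity on conjugates.

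Item (iii) is then a direct verification: at the level of columns, $\Psi \circ \Phi$ and $\Phi \circ \Psi$ act respectively as the splitting inverse of a stable sort and as the stable sort itself. I do not expect any real obstacle; the only subtle point is fixing a consistent tie-breaking convention between columns of $T$ and $S$ of equal length inside $T \uplus S$, but this is precisely the role of the ordering rule quoted in the paragraph just above the proposition.
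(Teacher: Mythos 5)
Your proposal is correct and follows essentially the same route as the paper: the paper's argument is precisely that the glueing map is invertible, resting on the identity $(\lambda \uplus \mu)^\perp = \lambda^\perp \vee \mu^\perp$ and the stated tie-breaking convention for equal-length columns. You have simply made the inverse map and the stable-sort bookkeeping explicit, which the paper leaves implicit.
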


 The lemma below describes the connection between $\mathcal{S}_n$ and $B_n$-Specht polynomials. In particular, it motivates the definition of the same operations on the bidiagram of shape $(\lambda,\mu)$ and on the diagram of its glueing $\lambda \uplus \mu$ via moving some of the boxes in a diagram in Section \ref{sec:posetbipartitions}.

\begin{lemma}\label{lemma:spechtofsums}
Let $(\lambda,\mu) \in \BP_n$ be a bipartition and let $(T,S)$ be a bitableau of shape $(\lambda,\mu)$. Then \begin{eqnarray*}\spe_{(T,S)}(\mathbf{x}_1,\ldots \mathbf{x}_n) =   \spe_{T \uplus S}(\mathbf{x}_1^2,\ldots,\mathbf{x}_{n}^2)\prod_{j \in S} \mathbf{x}_j .\end{eqnarray*}
\end{lemma}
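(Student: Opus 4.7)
The plan is to unravel both sides directly from the definitions and observe that the claim essentially reduces to a column-by-column bookkeeping.

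First, I would unfold the left hand side using the definition of $\spe_{(T,S)}$:
\begin{equation*}
\spe_{(T,S)}(\mathbf{x}) \;=\; \spe_T(\mathbf{x}^2)\,\spe_S(\mathbf{x}^2)\prod_{k\in S}\mathbf{x}_k \;=\; \Bigl(\prod_{i\geq 1}\Delta_{T_i}(\mathbf{x}^2)\Bigr)\Bigl(\prod_{j\geq 1}\Delta_{S_j}(\mathbf{x}^2)\Bigr)\prod_{k\in S}\mathbf{x}_k .
\end{equation*}
The trailing factor $\prod_{k\in S}\mathbf{x}_k$ already matches the trailing factor on the right hand side of the claim, so it is enough to show that $\spe_{T\uplus S}(\mathbf{x}^2)$ equals $\spe_T(\mathbf{x}^2)\,\spe_S(\mathbf{x}^2)$.

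To see this, I would inspect the construction of $T\uplus S$. By the Hall identity $(\lambda\uplus\mu)^\perp = \lambda^\perp\vee\mu^\perp$ recalled just before the lemma, the multiset of column lengths of $\lambda\uplus\mu$ is precisely the disjoint union of the multisets of column lengths of $\lambda$ and $\mu$. The convention used to define $T\uplus S$ fills each column of $\lambda\uplus\mu$ either with the entries of some column of $T$ or with the entries of some column of $S$, and every column of $T$ and every column of $S$ appears exactly once. Writing $(T\uplus S)_r$ for the sequence of entries of the $r$-th column of $T\uplus S$, this gives a bijection between the multiset $\{(T\uplus S)_r\}_r$ of columns and the multiset $\{T_i\}_i\sqcup\{S_j\}_j$.

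Applying the definition of the $\mathcal{S}_n$ Specht polynomial with the variables $(\mathbf{x}_1^2,\ldots,\mathbf{x}_n^2)$, I then get
\begin{equation*}
\spe_{T\uplus S}(\mathbf{x}^2) \;=\; \prod_{r\geq 1}\Delta_{(T\uplus S)_r}(\mathbf{x}^2) \;=\; \prod_{i\geq 1}\Delta_{T_i}(\mathbf{x}^2)\prod_{j\geq 1}\Delta_{S_j}(\mathbf{x}^2) \;=\; \spe_T(\mathbf{x}^2)\spe_S(\mathbf{x}^2),
\end{equation*}
where the second equality uses the column bijection above and the commutativity of the product. Multiplying by $\prod_{j\in S}\mathbf{x}_j$ yields the desired identity. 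The argument is essentially bookkeeping, with the only non-trivial ingredient being the column-matching between $T\uplus S$ and the pair $(T,S)$, which is exactly the content of the construction recalled in Proposition \ref{prop:gluing of tableau}; no real obstacle is expected.
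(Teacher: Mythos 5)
Your proof is correct and follows essentially the same route as the paper, which simply notes that the identity is immediate from the gluing construction because the columns of $T\uplus S$ are exactly the columns of $T$ together with the columns of $S$. You merely spell out the column-matching bookkeeping that the paper leaves implicit.
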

\begin{proof}
It is an immediate consequence of Proposition~\ref{prop:gluing of tableau}, since the Specht polynomials are defined as product of Vandermonde polynomials on the columns of the glued partition.
\end{proof}

\subsection{Existing orders on bipartitions}
As mentioned in the introduction, partial orders on the set of bipartitions of $n$ have been studied by several authors.
Let $(\lambda,\mu),(\lambda',\mu') \in \BP_n$ be bipartitions. 
For instance the following statements define partial orders: 
 \begin{equation} \label{order Hecke}
        (\lambda',\mu') \preccurlyeq (\lambda,\mu) \Leftrightarrow \begin{cases} \sum_{j \leq k} \lambda_j' \leq \sum_{j \leq k}\lambda_j , & \mbox{ for all } k, \mbox{ and } \\
    \mid\lambda'\mid + \sum_{j \leq k} \mu_j' \leq  \mid\lambda \mid + \sum_{j \leq k} \mu_j, & \mbox{ for all } k 
    \end{cases}
    \end{equation}
    was introduced in \cite{dipper1995hecke} to study Hecke algebras of type $B_n$, and was recently proven to occur naturally in the field of spin group theory \cite{xia2018partial}. Ariki generalized their order to multipartitions to study Hecke algebras of type $ G (m, 1, n) $ \cite{ariki2001classification}. The partial order
       \begin{equation} \label{order induced}
        (\lambda',\mu') \preccurlyeq (\lambda,\mu) \Leftrightarrow          \begin{cases} \mid\lambda'\mid < \mid\lambda\mid, & \mbox{ or } \\
             \mid\lambda'\mid = \mid\lambda\mid, & \mbox{ and } \lambda' \unlhd \lambda, \mu' \unlhd \mu 
             \end{cases}
    \end{equation}
    was formalized in \cite{al1981representations} to construct $B_n$-irreducible representations based on a more general procedure valid for finite groups. In~\cite{geissinger1978representations}, the authors introduce a partial order on bipartitions that extends the dominance order on partitions in the following way: \begin{equation}\label{order Geissinger}(\lambda',\mu') \preccurlyeq (\lambda,\mu) \Leftrightarrow \chi_{(\lambda,\mu)}-\chi_{(\lambda',\mu')} \textrm{ is zero or proper},\end{equation} where $\chi_{(\lambda,\mu)}$ is the character of $\operatorname{Ind}_{\mathcal{S}(\lambda) \times B(\mu)}^{B(n)} (1)$. The partial orders~(\ref{order Hecke}),~(\ref{order induced}) and~(\ref{order Geissinger}) are not equivalent.
    
    Moreover, these orders do not capture inclusions of ideals and varieties. Namely, for~(\ref{order Hecke}) and~(\ref{order induced}), we have the following ordering of bipartitions of $n=2$: \[ ((2),\emptyset) \succ ((1,1),\emptyset) \succ ((1),(1)),\] and for~(\ref{order Geissinger}), \[((2),\emptyset) \succ ((1,1),\emptyset) \textrm{ and }((1),(1)) \succ ((1,1),\emptyset)\]while the corresponding ideals are \[I_{((1,1),\emptyset)} = <\mathbf{x}_1^2-\mathbf{x}_2^2> \subsetneq I_{((1),(1))} = <\mathbf{x}_1,\mathbf{x}_2> \subsetneq I_{((2),\emptyset)} = <1>.\] In the next section, we introduce a new order on bipartitions that will capture inclusion of Specht ideals.

\section{The poset of bipartitions}\label{sec:posetbipartitions}

In this section we introduce our new order for bipartitions:
\begin{definition}
Let $(\lambda,\mu),(\lambda',\mu') \in \BP_n$ be biparitions of $n$. We say that $ (\lambda,\mu)$ \emph{bidominates} $(\lambda',\mu')$ if and only if \begin{align*}
     \sum_{j=1}^k (\lambda_j' + \mu_j') \quad & \leq \quad \sum_{j=1}^k (\lambda_j + \mu_j), \quad \mbox{and}\\ 
    \sum_{j=1}^{k-1} (\lambda_j' + \mu_j') + \lambda_{k}' \quad & \leq \quad \sum_{j=1}^{k-1} (\lambda_j + \mu_j )+ \lambda_{k} 
\end{align*} for all positive integers $k$. If $(\lambda,\mu)$ bidominates $(\lambda',\mu')$ we write $(\lambda',\mu') \unlhd (\lambda,\mu)$. We call $\unlhd$ the \emph{bidominance order}.
\end{definition}

We point out that the first condition is just a condition on the glueing of the bipartitions, i.e., \[\lambda' \uplus \mu' \unlhd \lambda \uplus \mu.\]

\begin{example}
The following bipartitions of $8$ are comparable: $ ((2,1,1),(3,1)) \unlhd ((3,2),(2,1)) $, since \begin{align*}
    2 \leq 3, \quad 5 \leq 5, \quad 6 \leq 7, \quad
    7 \leq 8, \quad 8 \leq 8. 
\end{align*}
However, the bipartitions $((2),(1,1))$ and $(\emptyset,(4))$ are not comparable, since $2 > 0 $ but $3 < 4$.
\end{example}

Although we use the same symbol for dominance and bidominance, this should not create any confusion, as they are defined on sets with empty intersection. We identify bipartitions with their associated bidiagrams and speak about \textit{boxes} in a bipartition.\\
It follows from the definition that our bidominance order is a partial order on $\BP_n$, and the previous example shows that it is not a total order.

Before proving our main theorem in the next section, we need a better understanding of our poset of bipartitions. 

The smallest element in $(\BP_n, \unlhd)$ is $(\emptyset,(1,\ldots,1))$, while the largest element is $((n),\emptyset)$. 
The following theorem characterizes the covering relations in the poset $(\BP_n,\unlhd)$. It turns out that there are four different cases, that are illustrated in Example \ref{example:covering cases}.

\begin{theorem} \label{thm:Bn covering classification}
Let $(\lambda,\mu),(\lambda',\mu') \in \BP_n$ be bipartitions and let $i = \min \{ j \in [n] : (\lambda_j,\mu_j) \neq (\lambda'_j,\mu'_j)\}$. Then, $(\lambda,\mu)$ covers $(\lambda',\mu')$ if and only if one of the following statements is true:
\begin{enumerate}
    \item $\mu=\mu'$, $\lambda$ covers $\lambda'$ with respect to the dominance order on partitions with  $\lambda_i' = \lambda_{i}-1$, and for $k$ such that $\lambda_k' = \lambda_k+1$, we have $\mu_{i-1} = \mu_i =\cdots = \mu_k$; 
    \item $\lambda=\lambda'$, $\mu$ covers $\mu'$ with respect to the dominance order on partitions, with $\mu_i' = \mu_i-1$, and for $k$ such that $\mu_k' = \mu_k+1$, we have $\lambda_i = \lambda_{i+1} = \cdots = \lambda_{k+1}$; 
 
    \item $\lambda\neq \lambda'$, $\mu\neq \mu'$ and $\lambda_i > \lambda_i'$.  If $k$ is maximal with $\lambda_i = \lambda_k$, then $\mu_i = \mu_k$, $(\lambda'_j,\mu'_j) = (\lambda_j-1,\mu_j+1)$ for any integer $i \leq j \leq k$, and $(\lambda'_j,\mu'_j) = (\lambda_j,\mu_j)$ otherwise;

    \item $\lambda\neq \lambda'$, $\mu \neq \mu'$, $\lambda_i = \lambda'_i$ (and therefore $\mu_i>\mu'_i$). If $k$ is maximal with $\mu_i = \mu_k$, then $\lambda_{i+1} = \lambda_{k+1}$, $(\mu'_j,\lambda_{j+1}') = (\mu_j-1,\lambda_{j+1}+1)$ for any integer $i \leqslant j \leqslant k$ and there is equality otherwise. 
\end{enumerate}
\end{theorem}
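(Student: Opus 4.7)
The plan is to prove the theorem in two directions: sufficiency, that each of the four stated configurations yields a covering, and necessity, that every covering must take one of these four forms. As a bookkeeping device I would introduce, for any $(\lambda',\mu')\unlhd(\lambda,\mu)$, the two nonnegative integer sequences
\[
A_k = \sum_{j=1}^k(\lambda_j+\mu_j)-\sum_{j=1}^k(\lambda'_j+\mu'_j), \qquad B_k = A_{k-1}+(\lambda_k-\lambda'_k),
\]
so that bidominance is exactly $A_k\geq 0, B_k\geq 0$ for all $k$, and an intermediate bipartition corresponds to a decomposition of $(A_k)$ and $(B_k)$ into nonnegative summands coming from a valid bipartition strictly between $(\lambda',\mu')$ and $(\lambda,\mu)$.

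For sufficiency, I would compute $(A_k)$ and $(B_k)$ explicitly in each of the four cases. In Cases 1 and 2 the sequences are indicators on a single contiguous interval, reflecting a Brylawski-style box move within $\lambda$ or within $\mu$; in Cases 3 and 4 they are indicators of the interval corresponding to a column transferred between $\lambda$ and $\mu$. In all four cases $A_k,B_k\in\{0,1\}$, so any intermediate decomposition also has values in $\{0,1\}$. I would then enumerate the elementary moves that could produce a candidate intermediate bipartition, namely the removal of a single box from one row and its reinsertion in a lower row of the same or the other component of the bidiagram. A direct calculation in each case shows that the equality conditions stated in the theorem, such as $\mu_{i-1}=\mu_i=\cdots=\mu_k$ in Case 1, are precisely what forbid the row configurations needed for any such move to produce a valid intermediate.

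For necessity, I would take a covering $(\lambda',\mu')\lhd(\lambda,\mu)$, let $i$ be the smallest index at which the two bipartitions differ, and split according to whether $\lambda_i=\lambda'_i$ or $\mu_i=\mu'_i$. The four resulting sign combinations match the four cases of the theorem. In each branch I would argue contrapositively using the elementary-move analysis from the sufficiency direction: any failure of the stated equality conditions yields an explicit elementary move that produces a strict intermediate bipartition, contradicting the covering hypothesis. The Brylawski-style conditions on $\lambda$ in Case 1 (and on $\mu$ in Case 2) are recovered by restricting attention to intermediates that modify only one component.

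The main technical obstacle is the completeness of the elementary-move enumeration: since bidominance mixes two families of inequalities, one must systematically check how each of the four kinds of box move perturbs both $(A_k)$ and $(B_k)$, and verify that the requirement that $\lambda''$ and $\mu''$ remain legitimate partitions translates back precisely into the row-equality conditions of the theorem. The crossed Cases 3 and 4 are the most delicate, since they correspond to composite moves shifting an entire column between $\lambda$ and $\mu$; the alignment conditions $\mu_i=\mu_k$ (respectively $\lambda_{i+1}=\lambda_{k+1}$) are what prevent such a composite from splitting into two strictly smaller steps.
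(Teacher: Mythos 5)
Your skeleton (two directions; for sufficiency, sandwich an intermediate $(\lambda^*,\mu^*)$ between $(\lambda',\mu')$ and $(\lambda,\mu)$; the bookkeeping sequences $A_k,B_k$ and the observation that they take values in $\{0,1\}$ in all four cases) is sound and consistent with the paper. The genuine gap is the step where you replace ``rule out every sandwiched bipartition'' by ``rule out every single-box elementary move''. A bipartition $(\lambda^*,\mu^*)$ with $(\lambda',\mu')\unlhd(\lambda^*,\mu^*)\unlhd(\lambda,\mu)$ need not be obtained from $(\lambda,\mu)$ by removing one box and reinserting it lower; in fact the bidominance order is not generated by single-box moves at all: the coverings of types (3) and (4) move a whole partial column (for example $((1,1),\emptyset)\unrhd(\emptyset,(1,1))$ is a covering, and $((1,1),\emptyset)$ admits no legal downward single-box move whatsoever). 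Hence showing that no elementary move lands strictly between the two bipartitions does not establish the covering property. Symmetrically, in the necessity direction the assertion that ``any failure of the stated equality conditions yields an explicit elementary move that produces a strict intermediate'' leaves unproved both the existence of a suitable (possibly multi-box) move and the verification that its result still bidominates $(\lambda',\mu')$; checking the second family of partial-sum inequalities against $(\lambda',\mu')$ is precisely where the difficulty sits, and your sketch does not indicate how the move is chosen or how these inequalities are verified.

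For comparison, the paper's sufficiency argument deduces from the sandwich the coordinatewise constraints $\lambda^*_j\in\{\lambda_j,\lambda_j-1\}$ and $\mu^*_j\in\{\mu_j,\mu_j+1\}$ on the window $i\leq j\leq k$ (with equality outside it), and then uses the fact that $\lambda^*,\mu^*$ are partitions, hence weakly decreasing, to force an all-or-nothing conclusion in cases (3) and (4); in cases (1) and (2) it reduces to Brylawski's classification of coverings for the ordinary dominance order on the window where the other component is constant. For necessity it proves the stronger statement that for \emph{any} strictly smaller bipartition there is an intermediate obtained from $(\lambda,\mu)$ by one of the four operations, via an explicit construction (a case analysis on auxiliary indices $p,q,r$ locating the nearest admissible place in the bidiagram), a verification of both families of inequalities in each subcase, and a reduction trick (prepending a row of size $\mu_1$) to handle $\lambda_i=\lambda'_i$. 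Your proposal would need to supply arguments of this kind, or else prove separately that every interval of this poset is exhausted by your elementary moves, which is essentially the content of the theorem itself; as written, both directions remain open.
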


The example below shows instances for all the covering cases of bipartitions. The boxes that are moved are colored in red.  
\begin{example} \label{example:covering cases}
\begin{itemize}
\item An example of a covering of type (1), where $i=2$ and $k=4$:
 \begin{center}

 $  \left(  \ytableausetup{smalltableaux}
\begin{ytableau}
*(white) & *(white) & *(white)\\
*(white) & *(white) & *(red) \\
*(white) & *(white) \\
*(white)
\end{ytableau} ~~~, ~~~
\ytableausetup{smalltableaux}
\begin{ytableau}
 *(white) & *(white)  \\
*(white) & *(white)\\
*(white) & *(white)  \\
*(white) & *(white) 
\end{ytableau}  \right) ~~\unrhd ~~
\left( \ytableausetup{smalltableaux}
\begin{ytableau}
*(white) & *(white) & *(white)\\
*(white) & *(white)  \\
*(white) & *(white) \\
*(white) & *(red)
\end{ytableau} ~~~, ~~~
\ytableausetup{smalltableaux}
\begin{ytableau}
 *(white) & *(white)  \\
*(white) & *(white)\\
*(white) & *(white)  \\
*(white) & *(white) 
\end{ytableau}\right)~. $
\end{center}

\item An example of a covering of type (2), where $i=1$ and $k=4$:
 \begin{center}
 $ \left(\ytableausetup{smalltableaux}
 \begin{ytableau}
*(white) & *(white) & *(white)  \\
*(white) & *(white) & *(white) \\
*(white) & *(white) & *(white) \\
*(white) & *(white) & *(white) \\
 *(white) & *(white) & *(white) 
\end{ytableau} ~~~, ~~~
\ytableausetup{smalltableaux}
\begin{ytableau}
 *(white) & *(white) & *(red) \\
*(white) & *(white)\\
*(white) & *(white)  \\
*(white)  \\
\none
\end{ytableau}  \right)
~~\unrhd ~~
\left( \ytableausetup{smalltableaux}
\begin{ytableau}
*(white) & *(white) & *(white) \\
*(white) & *(white) & *(white) \\
*(white) & *(white) & *(white) \\
*(white) & *(white) & *(white) \\
 *(white) & *(white) & *(white) 
\end{ytableau} ~~~, ~~~
\ytableausetup{smalltableaux}
\begin{ytableau}
 *(white) & *(white)  \\
*(white) & *(white)\\
*(white) & *(white)  \\
*(white)  & *(red) \\
\none
\end{ytableau} \right)~.$
\end{center}

\item An example of a covering of type (3), where $i=2$:
 \begin{center}
$\left( \ytableausetup{smalltableaux}
 \begin{ytableau}
*(white) & *(white) & *(white) \\
*(white) & *(red) \\
*(white) & *(red) \\
*(white) 
\end{ytableau} ~~~, ~~~
\ytableausetup{smalltableaux}
\begin{ytableau}
 *(white) & *(white) \\
*(white) \\
*(white)  \\
*(white)  
\end{ytableau}  \right)
~~\unrhd ~~
\left( \ytableausetup{smalltableaux}
 \begin{ytableau}
*(white) & *(white) & *(white) \\
*(white)  \\
*(white)  \\
*(white) 
\end{ytableau} ~~~, ~~~
\ytableausetup{smalltableaux}
\begin{ytableau}
 *(white) & *(white) \\
*(white) & *(red) \\
*(white) & *(red) \\
*(white)  
\end{ytableau} \right) ~.$
\end{center}

\item An example of a covering of type (4), where $i=1$:
 \begin{center}
$\left( \ytableausetup{smalltableaux}
 \begin{ytableau}
*(white) & *(white) & *(white) \\
*(white)  \\
*(white)  \\
*(white) \\
*(white) 
\end{ytableau} ~~~, ~~~
\ytableausetup{smalltableaux}
\begin{ytableau}
 *(white) & *(red) \\
 *(white) & *(red)  \\
 *(white) & *(red)   \\
 *(white) & *(red)   \\
 *(white)
\end{ytableau}  \right)
~~\unrhd ~~
\left( \ytableausetup{smalltableaux}
 \begin{ytableau}
*(white) & *(white) & *(white) \\
*(white)  & *(red)  \\
*(white)  & *(red) \\
*(white) & *(red) \\
*(white)  & *(red)
\end{ytableau} ~~~, ~~~
\ytableausetup{smalltableaux}
\begin{ytableau}
 *(white) \\
*(white) \\
*(white) \\
*(white)  \\
*(white) 
\end{ytableau} \right) ~.$
\end{center} 
\end{itemize}
\end{example}

One can think of the cases (3) and (4) as moving a \textit{partial column} from the left diagram in the bidiagram $(\lambda,\mu)$ to the right side and staying in the same row, or moving a partial column from the right diagram to the left and going down one row.

Now we present a proof of the theorem.

\begin{proof}[Proof of Theorem \ref{thm:Bn covering classification}]
We start the proof by showing that operations $(1)-(4)$ define covering relations. Suppose that $(\lambda',\mu')$ is obtained from $(\lambda,\mu)$ by one of these operations. We need to show that $(\lambda,\mu)$ covers $(\lambda',\mu')$, that is, if $(\lambda^*,\mu^*)$ is such that $(\lambda,\mu) \unrhd (\lambda^*,\mu^*) \unrhd (\lambda',\mu')$, then either $(\lambda^*,\mu^*)=(\lambda,\mu)$ or $(\lambda^*,\mu^*)=(\lambda',\mu')$. Since the proofs for operations $(2)$ and $(4)$ are respectively similar to $(1)$ and $(3)$, we will focus on these two operations. \begin{enumerate}[leftmargin=*]
    \item[(A)]Suppose that $(\lambda',\mu')$ is obtained from $(\lambda,\mu)$ by operation $(1)$. In particular, $\mu=\mu'$, and there exists $1<i<k$ such that $\lambda'_i=\lambda_i-1$, $\lambda'_k=\lambda_k+1$, $\lambda'_j=\lambda_j$ for $j\neq i,k$ and $\mu_{i-1}=\cdots=\mu_k$. 
    It is not difficult to show, by taking the difference between two consecutive partial sums, that \[\forall j \in \{1,\ldots, i-1,k+1,\ldots\}   ,\ \lambda_j=\lambda^*_j=\lambda'_j \textrm{ and } \mu_j=\mu^*_j=\mu'_j,\] as well as $\mu^*_k = \mu_k=\mu'_k$. Since $\mu^*_{i-1} = \mu_{i-1}=\mu_k=\mu^*_k$, this implies that $\mu^*_{i-1}=\cdots=\mu^*_k$ as well. In turn, this means that \[\overline{\lambda}=(\lambda_i,\ldots,\lambda_k) \unrhd \overline{\lambda^*}=(\lambda^*_i,\ldots,\lambda^*_k) \unrhd (\lambda'_i,\ldots,\lambda'_k)=\overline{\lambda'}.\]  By hypothesis, $\overline{\lambda}$ covers $\overline{\lambda'}$, so that either $\overline{\lambda^*} = \overline{\lambda}$ or $\overline{\lambda^*}=\overline{\lambda'}$. In turn, this shows that $(\lambda,\mu)$ covers $(\lambda',\mu')$. 
    \item[(B)] Now, we assume that $(\lambda',\mu')$ is obtained from $(\lambda,\mu)$ by operation $(3)$. This means, that there exists $i \leqslant k$ such that $\lambda_i = \lambda_k > \lambda_{k+1}$, $\mu_{i-1}>\mu_i = \mu_k$ and $\lambda_j' = \lambda_j-1$, $\mu_j' = \mu_j+1$, for $i \leq j \leq k$ and otherwise $\lambda_j' = \lambda_j$ and $\mu_j' = \mu_j$. \\
    As above we observe that \[ \forall j < i \mbox{ or } j > k, \lambda_j = \lambda_j^* = \lambda_j' \mbox{ and } \mu_j = \mu_j^* = \mu_j'. \] In the same way, it is easy to show that \[\forall 
  j, \ i\leqslant j \leqslant k,\ \lambda_j+ \mu_j \geqslant \lambda^*_j + \mu^*_j \geqslant \lambda'_j + \mu_j' = \lambda_j+\mu_j,\] and \[\forall j, \ i\leqslant j \leqslant k,\ \lambda_j \geqslant \lambda^*_j \geqslant \lambda'_j=\lambda_j-1.\] Together, this implies that \[\forall j, \ i\leqslant j \leqslant k,\ \lambda^*_j \in \{\lambda_j,\lambda_j-1\} \textrm{ and } \mu^*_j \in \{\mu_j,\mu_j+1\}.\]
    \begin{enumerate}
        \item Assume first that $\lambda_i^* = \lambda_i -1$. Then for $i \leqslant j \leqslant k$, \[ \lambda_j-1  \leqslant \lambda^*_j \leqslant \lambda_i^* =\lambda_i-1=\lambda_j-1\] which implies  $\lambda^*_j=\lambda_j-1$ and $\mu^*_j=\mu_j+1$, that is, $(\lambda^*,\mu^*)=(\lambda',\mu')$.
        \item On the other hand, if $\lambda_i^* = \lambda_i$ or equivalently $\mu^*_i=\mu_i$, then for $i \leqslant j \leqslant k$, we have \[\mu_j \leqslant \mu^*_j \leqslant \mu^*_i = \mu_i=\mu_j\] which implies that $\mu_j^*=\mu_j$ and $\lambda^*_j=\lambda_j$, that is $(\lambda^*,\mu^*)=(\lambda,\mu)$.
    \end{enumerate}
\end{enumerate}
Thus, they describe a covering relation in the poset $(\BP_n,\unlhd)$. \smallskip

Now, we prove the converse. Let $(\lambda,\mu)$ and $(\lambda',\mu')$ be two different bipartitions of $n$ and assume that $(\lambda',\mu') \unlhd (\lambda,\mu)$. We show that there exists a bipartition $(\lambda^*,\mu^*)$ of $n$ that can be obtained from $(\lambda,\mu)$ through one of the cases (1)-(4), and $(\lambda',\mu') \unlhd (\lambda^*,\mu^*) \unlhd (\lambda,\mu)$. 
Let $i \in \N$ be minimal with $(\lambda_i,\mu_i) \neq (\lambda_i',\mu_i')$. \begin{enumerate}[leftmargin=*]
    \item[(A)] We  consider first the case where $\lambda_i > \lambda_i'$ and show that we can obtain $(\lambda^*,\mu^*)$ using one of the operations (1),(3), or (4). Let $k \in \N$ be maximal with $\lambda_i = \lambda_k$. 

\noindent We begin our analysis with distinguishing between the following two cases. Either there exists a $p \in \N$ such that $i \leq p \leq k$ and $\mu_p < \mu_{p-1} $ or not, with the convention that $\mu_1<\mu_0$. \smallskip

\begin{enumerate}[leftmargin=5pt]
    \item First, we assume that there exists such a $p$ and we fix the minimal $p$ with this property. Then $\mu_p$ is the first place after $\mu_{i-1}$ where we can put a box to still obtain a partition. Let $q \in \N$ be minimal such that $p \leq q \leq k$ and $\mu_q = \cdots = \mu_k$. We define $(\lambda^*,\mu^*)$ as the bipartition of $n$ with $(\lambda_j^*,\mu_j^*) = (\lambda_j-1,\mu_j+1)$ for every $q \leq j \leq k$ and otherwise $(\lambda_j^*,\mu^*_j) = (\lambda_j,\mu_j)$. We observe easily that $(\lambda^*,\mu^*) \unlhd (\lambda,\mu)$ and $\lambda^* \uplus \mu^* = \lambda \uplus \mu \unrhd \lambda' \uplus \mu'$, and we are just left with verifying \[\lambda_{t}^*+\sum_{j=1}^{t-1}(\lambda_j^*+\mu_j^*) \geq \lambda_{t}'+\sum_{j=1}^{t-1}(\lambda_j'+\mu_j'),\] for any $t \in \N$. However, this is clear for any $t < q$ and $t>k$. If $q \leqslant t \leqslant k$, we have \[\lambda_t' \leqslant \lambda_i' \leqslant \lambda_i-1 = \lambda_t-1 = \lambda_t^*\] and \[\sum_{j=1}^{t-1} (\lambda_j^*+\mu_j^*) = \sum_{j=1}^{t-1} (\lambda_j + \mu_j ) \geq \sum_{j=1}^{t-1} (\lambda_j'+\mu_j')\] so that  we also have \[\lambda_t^* +\sum_{j=1}^{t-1} (\lambda_j^* +\mu_j^*) \geq \lambda_t' + \sum_{j=1}^{t-1} (\lambda_j'+\mu_j').\] This is operation $(3)$.

\item Next, we assume that no such $p$ exists. In particular, $i>1$ and $\mu_{i-1} = \cdots = \mu_k$. We consider the closest possible free place in the bidiagram $(\lambda,\mu)$, namely we take $r > k$ to be the minimal integer with $\lambda_r < \lambda_k -1$ or ($\lambda_r = \lambda_k-1$ and $\mu_r < \mu_k$). Such an $r$ always exists, since we allow ourselves to extend the partitions with empty rows. If it did not exist, that would mean that $\lambda_k=1$ and $\mu_k=0$. By definition of $k$ and $i$, this would mean that $\lambda'_j=\lambda_j$ for $1 \leqslant j < i$, $\mu'_j=\mu_j$ for $1 \leqslant j <i$, that $1=\lambda_k= \cdots =\lambda_i >\lambda'_i = 0$. Also, we have $0=\mu_k = \dots=\mu_{i-1} = \mu_{i-1}'$. That means that the sizes of the bipartitions $(\lambda,\mu)$ and $(\lambda',\mu')$ are different, which is absurd. We proceed again with a case distinction. \\
\begin{enumerate}[leftmargin=7pt]
    \item Let us start with assuming that $\lambda_r = \lambda_k-1$ and $\mu_r < \mu_k$. We define $\mu_j^* = \mu_j-1$ and $\lambda_{j+1}^* = \lambda_{j+1}+1$ for all $k \leq j \leq r-1$, while $\mu_j^* = \mu_j$ and $\lambda_{j+1}^* = \lambda_{j+1}$ for any other $j \in \N_0$. Since $\lambda_{k+1}< \lambda_k$ and $\mu_{r-1} =\mu_k >\mu_r$, $(\lambda^*,\mu^*)$ is a bipartition. Clearly $(\lambda^*,\mu^*) \unlhd (\lambda,\mu)$. By construction we have \[\lambda_{t}^*+\sum_{j=1}^{t-1}(\lambda_j^*+\mu_j^*) = \lambda_{t}+\sum_{j=1}^{t-1}(\lambda_j+\mu_j) \geq \lambda_{t}'+\sum_{j=1}^{t-1}(\lambda_j'+\mu_j')\] for any $t \in \N$. Also, for $t<k$ or $t \geqslant r$, we have \[\sum_{j=1}^t (\lambda_j + \mu_j ) = \sum_{j=1}^t (\lambda_j^* + \mu_j^* )\] Thus, it remains to show this inequality holds for $k \leqslant t <r$. The following inequalities follow from the definitions of $k$, $i$ and $r$: \[\begin{array}{ll}\lambda_k^* = \lambda_k = \lambda_i \geqslant \lambda_i'+1 \geqslant \lambda_k' +1,\\  \lambda_t^* = \lambda_t + 1 = \lambda_k-1+1 \geqslant \lambda_k' +1 \geqslant \lambda_t'+1  & \textrm{ for } k<t\leqslant r \\ \mu_t^*+1 = \mu_t = \mu_k = \mu_{i-1} = \mu'_{i-1} \geqslant \mu_t' & \textrm{ for } k \leqslant t < r.\end{array} \] This shows that $\lambda^* \uplus \mu^* \unrhd \lambda' \uplus \mu'$, and we obtain $(\lambda^* ,\mu^*)$ from $(\lambda,\mu)$ by operation $(4)$.

\item Finally, we assume that $\lambda_r < \lambda_k-1$. In particular, $\mu_{i-1} = \cdots = \mu_{r-1}$ and $\lambda_{r-1} = \cdots = \lambda_{k+1} = \lambda_{k}-1 = \cdots = \lambda_{i}-1.$ We distinguish between two cases. 
\begin{enumerate}[leftmargin=9pt]
    \item[(ii,a)] First, assume that $\mu_{r-1} > \mu_r$. We define $\lambda^*_r = \lambda_r+1 \leqslant \lambda_{r-1}$ and $\mu_{r-1}^* = \mu_{r-1}-1 \geqslant \mu_r$, while $\mu_j^* = \mu_j$ and $\lambda_j^* = \lambda_j$ otherwise. Then $(\lambda^*,\mu^*)$ is a well-defined bipartition, and as usual, $(\lambda^*,\mu^*) \unlhd (\lambda,\mu)$. Also, proving that $(\lambda',\mu') \unlhd(\lambda^* ,\mu^*)$ is straightforward, except maybe proving that \[\sum_{j=1}^{r-1}(\lambda_j^* + \mu_j^*) \geqslant  \sum_{j=1}^{r-1} (\lambda'_j + \mu'_j).\] But as previously, we have: \[\begin{array}{ll}\lambda^*_j=\lambda_j=\lambda_j' & \textrm{ for } 1 \leqslant j  <i \\\mu^*_j=\mu_j=\mu_j' & \textrm{ for } 1 \leqslant j <i \\ \lambda_j^* = \lambda_j = \lambda_i \geqslant \lambda_i'+1 \geqslant \lambda'_j+1 &\textrm{ for } i\leqslant  j \leqslant k \\
\lambda_j^* = \lambda_j = \lambda_k-1 = \lambda_i-1 \geqslant \lambda_i' \geqslant \lambda'_j & \textrm{ for } k< j\leqslant r-1 \\ \mu_j^* = \mu_j = \mu_{i-1} = \mu'_{i-1} \geqslant \mu'_j & \textrm{ for } i \leqslant j \leqslant r-2 \\ \mu_{r-1}^* =  \mu_{i-1}-1 = \mu'_{i-1}-1 \geqslant \mu_{r-1}'-1 \end{array}\] All together, this gives \[\sum_{j=1}^{r-1} (\lambda^*_j + \mu^*_j) \geqslant \sum_{j=1}^{r-1} (\lambda'_j+ \mu_j') + (k-i) \geqslant \sum_{j=1}^{r-1} (\lambda'_j+ \mu_j')\] as wanted.  This also means that we obtain $(\lambda ^*,\mu^*)$ from $(\lambda,\mu)$ by operation $(4)$.
\item[ii,b] We are left with the case $\mu_r= \mu_{r-1}$. By a previous remark, this means that  $\mu_{i-1} = \cdots = \mu_r$. We set $\mu^* = \mu$ and $\lambda_k^* = \lambda_k-1,\lambda_r^* = \lambda_r+1$, and otherwise $\lambda_j^* = \lambda_j$. Then, by assumption $(\lambda^*,\mu^*)$ is a bipartition and $(\lambda^*,\mu^*) \unlhd (\lambda,\mu)$. It is also straightforward to show that $(\lambda^*,\mu^*) \unrhd (\lambda',\mu')$ except maybe the partial sums inequalities in rows $k$ to $r$. Since $\lambda^*_k + \lambda^*_r = \lambda_k+\lambda_r$, we only need to look at rows $k$ to $r-1$. To this purpose, we remark that: \[\begin{array}{ll}
\lambda_j^* =\lambda_j = \lambda'_j & \textrm{ for } 1 \leqslant j <i \\
\mu_j^* =\mu_j = \mu'_j & \textrm{ for  every }  j  \\
\lambda_j^* = \lambda_j = \lambda_i \geqslant \lambda'_i+1 \geqslant \lambda'_j+1 & \textrm{ for } i \leqslant j <k \\
\lambda^*_k = \lambda_k-1 = \lambda_i-1 \geqslant \lambda'_i \geqslant \lambda_k' \\
\lambda_j^* = \lambda_j = \lambda_k-1 = \lambda_i-1 \geqslant\lambda'_i \geqslant \lambda'_j & \textrm{ for } k<j\leqslant r-1
\end{array}\] Then for any $k \leqslant j <r$, we have \[\sum_{t=1}^j(\lambda^*_t + \mu^*_t)  \geqslant \sum_{t=1}^j(\lambda'_t + \mu'_t)  + (k-i) \geqslant \sum_{t=1}^j(\lambda'_t + \mu'_t) \]
and in the same way  \[\lambda^*_j + \sum_{t=1}^{j-1}(\lambda^*_t + \mu^*_t)   \geqslant \lambda'_j + \sum_{t=1}^{j-1}(\lambda'_t + \mu'_t). \] Thus we obtain $(\lambda^*,\mu^*)$ from $(\lambda,\mu)$ by operation $(1)$.
 \end{enumerate}
 \end{enumerate}
 \end{enumerate}
 \item[(B)] It remains to deal with the case $\lambda_i = \lambda'_i$, and $\mu_i> \mu'_i$. It can easily be deduced from the previous case, by noticing the following: let $\rho=(\mu_1,\mu_1,\mu_2,\ldots)$ and $\rho'=(\mu_1,\mu'_1,\mu'_2,\ldots)$, then $(\rho,\lambda)$ and $(\rho',\lambda')$ are bipartitions of $n+\mu_1$ such that $(\rho,\lambda)\unrhd (\rho',\lambda')$. If $j$ is minimal such that $(\rho_j,\lambda_j) \neq (\rho'_j,\lambda'_j)$, then $j=i+1$ and $\rho_j=\mu_i > \mu'_i = \rho'_j$. From what we have just seen, there exists $(\rho^*,\lambda^*)$, obtained from $(\rho,\lambda)$ by operations $(1)$, $(3)$ or  $(4)$, such that $(\rho,\lambda) \unrhd (\rho^*,\lambda^*) \unrhd (\rho',\lambda')$. It is then clear that $\rho^*_1 = \rho_1 = \rho'_1 = \mu_1$. Let $\mu^* = (\rho^*_2,\rho^*_3,\ldots)$. It is clear that $(\lambda^*,\rho^*)$ is a bipartition and obviously $(\lambda,\mu) \unrhd (\lambda^*,\mu^*) \unrhd (\lambda',\mu')$. Moreover if we obtained $(\mu^*,\rho^*)$ from $(\mu,\rho)$ by operations $(1)$, $(3)$ or $(4)$ respectively, we obtain $(\lambda^*,\mu^*)$ from $(\lambda,\mu)$ by operations $(2)$, $(4)$ or $(3)$ respectively.

 \end{enumerate}

\end{proof}

It is in general not true that if $\lambda$ is a partition covering $\lambda'$, then $(\lambda,\mu)$ covers $(\lambda',\mu)$, as the following example shows: 

\begin{example}\label{ex:non-example}
Consider $(\lambda,\mu) = ((3,3,2,1),(2,2,2,1))$ and $(\lambda',\mu')=((3,2,2,2),(2,2,2,1))$. Indeed, it is $\mu_1 = \mu_2 = \mu_3 > \mu_4$. Thus, there exist bipartitions which lie in between.
    \[\left(
 \ytableausetup{smalltableaux}
 \begin{ytableau}
*(white) & *(white) & *(white)\\
*(white) & *(white) & *(red) \\
*(white) & *(white) \\
*(white)
\end{ytableau} ~~~, ~~~
\ytableausetup{smalltableaux}
\begin{ytableau}
 *(white) & *(white)  \\
*(white) & *(brown)\\
*(white) &*(cyan)  \\
*(white)
\end{ytableau}  \right)
~~\unrhd ~~
\left( \ytableausetup{smalltableaux}
\begin{ytableau}
*(white) & *(white) & *(white)\\
*(white) & *(white) & *(red) \\
*(white) & *(white) \\
*(white) & *(cyan)
\end{ytableau} ~~~, ~~~
\ytableausetup{smalltableaux}
\begin{ytableau}
 *(white) & *(white)  \\
*(white) & *(brown)\\
*(white)  \\
*(white)
\end{ytableau} \right)
~~\unrhd ~~~
\left( \ytableausetup{smalltableaux}
\begin{ytableau}
*(white) & *(white) & *(white)\\
*(white) & *(white) & *(red) \\
*(white) & *(white) &*(brown) \\
*(white) & *(cyan)
\end{ytableau} ~~~, ~~~
\ytableausetup{smalltableaux}
\begin{ytableau}
 *(white) & *(white)  \\
*(white) \\
*(white) \\
*(white)
\end{ytableau} \right)
~~\unrhd ~~~
\left(\begin{ytableau}
*(white) & *(white) & *(white)\\
*(white) & *(white) \\
*(white) & *(white) \\
*(white) &  *(cyan)
\end{ytableau} ~~~, ~~~
\ytableausetup{smalltableaux}
\begin{ytableau}
 *(white) & *(white)  \\
*(white) & *(red)\\
*(white) &*(brown) \\
*(white)
\end{ytableau}\right)~. 
\]
\end{example}

Since the poset of partitions for the standard dominance order is a lattice, it is natural to ask whether this holds for our order on bipartitions. However, this is not the case already for $n=4$.
Consider $a = ((2), (1,1))$ and $b = ((2,2), \emptyset )$. Now take $c = ((2,1,1), \emptyset)$. It is covered by both $a$ and $b$, so if $a$ and $b$ have a meet, that is a greatest lower bound, it has to be $c$. However, for $d = (\emptyset, (2,2))$,   $d < a$ and $d < b$, but $c$ and $d$ are not comparable. 
Similarly, one can ask if the poset $(\BP_n,\unlhd)$ is graded, i.e., any maximal chain has equal length. However, already for $n=3$ the poset is non-graded since there exist maximal chains of length $6$ and $7$. 

\section{The posets of Specht ideals and varieties}\label{sec:inclusion}

In this section, we state and prove our main theorem:

\begin{theorem}\label{thm:equivalence}
Let $(\lambda,\mu)$ and $(\vartheta,\omega)$ be bipartitions of $n$. Let $I_{(\lambda,\mu)}, I_{(\vartheta,\omega)}$ denote their associated Specht ideals and $V_{(\lambda,\mu)}, V_{(\vartheta,\omega)}$ their associated Specht varieties. Then, the following assertions are equivalent:
\begin{enumerate}
    \item\label{equivalence_a} The bipartition $(\lambda,\mu)$ bidominates $(\vartheta,\omega)$, i.e. $(\lambda,\mu) \unrhd (\vartheta,\omega)$;
    \item\label{equivalence_b} The $B_n$-Specht ideal $I_{(\lambda,\mu)}$ contains the $B_n$-Specht ideal $I_{(\vartheta,\omega)}$, i.e. $I_{(\lambda,\mu)} \supset I_{(\vartheta,\omega)}$;
    \item\label{equivalence_c} The $B_n$-Specht variety $V_{(\lambda,\mu)}$ is contained in the $B_n$-Specht variety $V_{(\vartheta,\omega)}$, i.e. $V_{(\lambda,\mu)} \subset V_{(\vartheta,\omega)}$.
    \end{enumerate}
\end{theorem}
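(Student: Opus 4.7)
The plan is to prove the cycle of implications $(1) \Rightarrow (2) \Rightarrow (3) \Rightarrow (1)$. The implication $(2) \Rightarrow (3)$ is immediate from the definition of Specht varieties as the common zero loci of Specht ideals. For $(1) \Rightarrow (2)$, I would invoke Theorem~\ref{thm:Bn covering classification} to reduce to the case where $(\lambda,\mu)$ covers $(\vartheta,\omega)$ in the bidominance order, so that it suffices to establish the ideal containment for each of the four covering types separately.

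Consider first covering types (1) and (2), where only one of $\lambda$ or $\mu$ changes and the change is a covering in the classical $\mathcal{S}$-dominance order. Without loss of generality suppose we are in type~(1), so that $\omega=\mu$ and $\vartheta \unlhd \lambda$ as partitions of $|\lambda|$. Using Lemma~\ref{lemma:spechtofsums}, write
\[
\spe_{(T',S')}(\mathbf{x}) \;=\; \spe_{T'}(\mathbf{x}^2)\, \spe_{S'}(\mathbf{x}^2) \prod_{k \in S'} \mathbf{x}_k.
\]
Since $T'$ is a tableau of shape $\vartheta$ whose entries form the set $E := [n]\setminus S'$, Theorem~\ref{thm:equivalenceSn}, applied to the $\mathcal{S}$-Specht ideals in the subring $\K[\mathbf{y}_i : i \in E]$, yields an expansion $\spe_{T'}(\mathbf{y}) = \sum_\alpha h_\alpha(\mathbf{y}) \spe_{T_\alpha}(\mathbf{y})$ with each $T_\alpha$ a tableau of shape $\lambda$ with entries in $E$. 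Substituting $\mathbf{y} \mapsto \mathbf{x}^2$ and multiplying by $\spe_{S'}(\mathbf{x}^2) \prod_{k \in S'} \mathbf{x}_k$ exhibits $\spe_{(T',S')}$ as $\sum_\alpha h_\alpha(\mathbf{x}^2)\spe_{(T_\alpha,S')} \in I_{(\lambda,\mu)}$.

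The covering types (3) and (4) are more delicate because entries migrate between $T$ and $S$, so the monomial factor $\prod_{k \in S}\mathbf{x}_k$ is no longer preserved. For type~(3), letting $A$ denote the migrating partial column and $B,C$ the surviving top parts of the two interacting columns, a direct calculation on a companion bitableau $(T,S)$ of shape $(\lambda,\mu)$ sharing all other columns with $(T',S')$ yields
\[
\spe_{(T',S')} \;=\; \spe_{(T,S)}\cdot\Bigl(\prod_{j \in A}\mathbf{x}_j\Bigr)\cdot\frac{\prod_{c \in C,\,a \in A}(\mathbf{x}_c^2 - \mathbf{x}_a^2)}{\prod_{b \in B,\,a \in A}(\mathbf{x}_b^2 - \mathbf{x}_a^2)}.
\]
To convert this rational identity into an honest ideal membership, I plan to telescope using $(\mathbf{x}_c^2 - \mathbf{x}_a^2) = (\mathbf{x}_c^2 - \mathbf{x}_b^2) + (\mathbf{x}_b^2 - \mathbf{x}_a^2)$. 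Each application clears one factor in the denominator and produces an extra term of the form $\mathbf{x}_a(\mathbf{x}_c^2 - \mathbf{x}_b^2) \cdot (\spe_{(T,S)}/(\mathbf{x}_b^2-\mathbf{x}_a^2))$, which one recognises as (a scalar multiple of) $\mathbf{x}_c\cdot\spe_{(T^\ast,S^\ast)}$ for the bitableau $(T^\ast,S^\ast)$ of shape $(\lambda,\mu)$ obtained from $(T,S)$ by swapping $a$ and $c$ in the corresponding columns. Iterating this over all pairs in $B \times A$ and $C \times A$ and inducting on $|A|\cdot|B|\cdot|C|$ should express $\spe_{(T',S')}$ as a $\K[\mathbf{x}]$-linear combination of Specht polynomials of shape $(\lambda,\mu)$; type~(4) is analogous with the partial column descending one row.

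For $(3) \Rightarrow (1)$ I argue by contrapositive: assuming $(\lambda,\mu) \not\unrhd (\vartheta,\omega)$, I construct $z \in V_{(\lambda,\mu)} \setminus V_{(\vartheta,\omega)}$. The key observation is that $\spe_{(T,S)}(z) \neq 0$ precisely when every column of $T$ and of $S$ has entries in pairwise distinct classes under the equivalence $i \sim j \Leftrightarrow z_i = \pm z_j$, and no entry of $S$ lies in the zero class. I take $z$ whose equivalence classes have sizes $\vartheta_j + \omega_j$, placing the zero class in those rows $j$ where $\omega_j=0$; then the natural bitableau of shape $(\vartheta,\omega)$ that assigns the entries of class $j$ to row $j$ witnesses $\spe(z)\neq 0$, so $z\notin V_{(\vartheta,\omega)}$. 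On the other hand, the two families of bidominance inequalities encode the Hall-type conditions necessary to place these classes in the bidiagram of $(\lambda,\mu)$ without either repeating a class within a column or putting a zero entry into $S$; their failure forces $\spe_{(T,S)}(z)=0$ for every bitableau of shape $(\lambda,\mu)$, so $z\in V_{(\lambda,\mu)}$. The principal obstacle is performing the telescoping Garnir-type argument in covering types~(3) and~(4) for general $B$, $C$ and $A$ while tracking signs and the companion bitableaux; a secondary obstacle is verifying that the failure of precisely the second bidominance inequality translates into a genuine combinatorial obstruction to packing classes into bidiagrams, which reduces to a discrete transportation argument on the bidiagram of $(\lambda,\mu)$.
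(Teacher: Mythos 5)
The serious gap is in your argument for $(3) \Rightarrow (1)$. Your single test point, with equivalence classes of sizes $\vartheta_j+\omega_j$ and the zero class confined to rows with $\omega_j=0$, only probes the first family of bidominance inequalities (the gluing condition $\lambda\uplus\mu \unrhd \vartheta\uplus\omega$); it cannot detect violations of the second family, which involve $\lambda_k$ alone, because your recipe never produces a zero class of size $\vartheta_1$ when $\len(\omega)\geq 1$. Concretely, take $n=3$, $(\lambda,\mu)=((1),(2))$ and $(\vartheta,\omega)=((2),(1))$. Then $(\lambda,\mu) \not\unrhd (\vartheta,\omega)$ since $\vartheta_1=2>1=\lambda_1$, but your point is $z=(a,a,a)$ with $a\neq 0$, and the Specht polynomials of shape $((1),(2))$ are the $\mathbf{x}_j\mathbf{x}_k$, so $\spe(z)=a^2\neq 0$: your $z$ lies in \emph{neither} variety and witnesses nothing. (Here $V_{((1),(2))}$ is the union of the coordinate axes, $V_{((2),(1))}=\{0\}$, and a correct witness is $(0,0,a)$.) So your claim that failure of the second inequality forces $\spe_{(T,S)}(z)=0$ for all bitableaux of shape $(\lambda,\mu)$ is simply false for this point, and no ``transportation argument'' on it can repair this. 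The paper avoids the problem by using two kinds of test points: one with blocks of sizes $\vartheta_j+\omega_j$ (Lemma \ref{lem:z1}), giving exactly the gluing dominance, and a second with $\vartheta_1$ zeros followed by blocks of sizes $\omega_j+\vartheta_{j+1}$ (Lemma \ref{lem:z2}), whose pigeonhole analysis yields precisely the second family of inequalities.

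There is also a flaw in your treatment of covering types (3) and (4) in $(1)\Rightarrow(2)$. In your telescoping step the expression $\spe_{(T,S)}/(\mathbf{x}_b^2-\mathbf{x}_a^2)$ is not a polynomial: $a$ and $b$ sit in different columns of $(T,S)$, so $\mathbf{x}_b^2-\mathbf{x}_a^2$ is not a factor of $\spe_{(T,S)}$, and the identification of the leftover terms with $\mathbf{x}_c\cdot\spe_{(T^\ast,S^\ast)}$ is unjustified; as written the induction does not get off the ground. The paper instead proves membership by an explicit alternating (Garnir-type) sum over cosets $\mathcal{S}_{A\cup B_1}/(\mathcal{S}_A\times\mathcal{S}_{B_1})$ applied to a companion polynomial, establishing the needed identity by showing the target Vandermonde divides the sum and then comparing lexicographic leading terms; your sketch would have to be replaced by, or reworked into, an argument of this kind. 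Your reduction to the four covering cases via Theorem \ref{thm:Bn covering classification}, your treatment of covering types (1) and (2) via the $\mathcal{S}_n$ result and Lemma \ref{lemma:spechtofsums}, and the implication $(2)\Rightarrow(3)$ are all fine and agree with the paper.
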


We start with the first implication, namely that a dominance of bipartitions implies the containment of the corresponding $B_n$-Specht ideals.
\begin{proposition}\label{prop:Specht Ideals inclusion}
Let $(\lambda,\mu), (\lambda',\mu') \in \BP_n$ be bipartitions of $n$ and let $(\lambda',\mu') \unlhd (\lambda,\mu)$. Then, $I_{(\lambda',\mu')}\subset I_{(\lambda,\mu)}$.
\end{proposition}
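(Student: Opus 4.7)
The plan is to proceed by induction and case analysis. By Theorem~\ref{thm:Bn covering classification}, every relation $(\lambda',\mu') \unlhd (\lambda,\mu)$ factors as a chain of coverings of four specific types. Since ideal inclusion is transitive, it suffices to prove the inclusion when $(\lambda,\mu)$ covers $(\lambda',\mu')$. I would then split into the four covering types of Theorem~\ref{thm:Bn covering classification} and treat them in turn.

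Cases (1) and (2) of the classification change only one of the two partitions, and can be reduced to the symmetric group case via Lemma~\ref{lemma:spechtofsums}. For instance, in case (1), $\mu=\mu'$ and $\lambda$ covers $\lambda'$ in the standard dominance order. For any bitableau $(T',S)$ of shape $(\lambda',\mu)$, Theorem~\ref{thm:equivalenceSn} applied in the variables $\mathbf{x}^2$ yields an expression $\spe_{T'}(\mathbf{x}^2) = \sum_i g_i(\mathbf{x}^2)\, \spe_{T_i}(\mathbf{x}^2)$ with each $T_i$ a tableau of shape $\lambda$. Multiplying through by $\spe_S(\mathbf{x}^2)\prod_{k \in S} \mathbf{x}_k$ and invoking Lemma~\ref{lemma:spechtofsums} on both sides gives
\[
\spe_{(T',S)}(\mathbf{x}) = \sum_i g_i(\mathbf{x}^2)\, \spe_{(T_i,S)}(\mathbf{x}) \in I_{(\lambda,\mu)},
\]
as required.

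For cases (3) and (4), where a column-segment of length $k-i+1$ is transferred between the two partitions, I would focus on case (3), as case (4) is symmetric in the roles of $\lambda$ and $\mu$. Choose a bitableau $(T,S)$ of shape $(\lambda,\mu)$ in which the entries $a_i,\ldots,a_k$ to be moved occupy the positions $(i,\lambda_i),\ldots,(k,\lambda_i)$ in $T$, and let $(T',S')$ be the bitableau obtained by relocating these entries into the new column of $S'$ at position $\mu_i+1$, leaving every other entry unchanged. In the ``clean'' subcase, where $\lambda_{i-1}>\lambda_i$ and $\mu_{i-1}>\mu_i+1$, the transferred segment is a full column in both $T$ and $S'$, and a direct computation from the definition of the Specht polynomial and Lemma~\ref{lemma:spechtofsums} gives
\[
\spe_{(T',S')}(\mathbf{x}) = \Bigl(\prod_{j=i}^k \mathbf{x}_{a_j}\Bigr)\, \spe_{(T,S)}(\mathbf{x}),
\]
which yields the inclusion immediately.

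The main obstacle will be the remaining subcases of (3) and (4), where $\lambda_{i-1}=\lambda_i$ or $\mu_{i-1}=\mu_i+1$, so that the transferred segment shares a column with some ``anchor'' entry above it (call it $c$ in $T$, or $c'$ in $S$). Using the splitting identity
\[
\Delta_{(c,a_i,\ldots,a_k)}(\mathbf{x}^2) = \Delta_{(a_i,\ldots,a_k)}(\mathbf{x}^2)\,\prod_{j=i}^k (\mathbf{x}_c^2 - \mathbf{x}_{a_j}^2)
\]
and its counterpart on the $S$-side, the ratio $\spe_{(T',S')}/\spe_{(T,S)}$ acquires spurious factors of the form $\prod_j(\mathbf{x}_c^2 - \mathbf{x}_{a_j}^2)^{\pm 1}$, so $\spe_{(T',S')}$ is no longer a single polynomial multiple of $\spe_{(T,S)}$. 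The plan is to combine several identities of this shape, obtained by letting the anchor entries vary over different bitableaux of shape $(\lambda,\mu)$, and to exploit the $B_n$-invariance of $I_{(\lambda,\mu)}$ to cancel the extraneous factors and realize $\spe_{(T',S')}$ as a $\K[\mathbf{x}]$-combination of Specht polynomials of shape $(\lambda,\mu)$. I expect this to be the technically most delicate step of the proof.
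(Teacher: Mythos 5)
Your reduction to the covering relations of Theorem~\ref{thm:Bn covering classification} and your treatment of types (1) and (2) (apply Theorem~\ref{thm:equivalenceSn} in the squared variables and multiply back the untouched factor $\spe_S(\mathbf{x}^2)\prod_{k\in S}\mathbf{x}_k$) are correct and match the paper. The genuine gap is in types (3) and (4), which carry essentially all of the content of the proposition: you prove only a degenerate situation and explicitly defer "the technically most delicate step", i.e.\ you do not prove it. What is needed there (and what the paper actually does) is a nontrivial identity: after reducing to the two columns involved, with $A,B_1,B_2$ of sizes $a,b,b$, one must show that $\Delta_{A\cup B_1}(\mathbf{x}^2)$ equals the alternating sum $\sum_{\sigma\in G/H}\epsilon(\sigma)\,\sigma\bigl(\Delta_A(\mathbf{x}^2)\Delta_{B_1}(\mathbf{x}^2)\prod_{i\in A}\prod_{j\in B_2}(\mathbf{x}_i^2-\mathbf{x}_j^2)\bigr)$ with $G=\mathcal{S}_{A\cup B_1}$ and $H=\mathcal{S}_A\times\mathcal{S}_{B_1}$, which is proved by checking divisibility by every $\mathbf{x}_i^2-\mathbf{x}_j^2$ with $i,j\in A\cup B_1$ and comparing lexicographic leading terms. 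This is a sum over all $\binom{a+b}{a}$ ways of distributing the entries of the long column, not a cancellation between a handful of "anchored" variants; your plan to "combine several identities and cancel the extraneous factors" names the obstacle but supplies no mechanism, so the proof is incomplete at its core.

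Your subcase analysis is also off. Since the rows of a partition are weakly decreasing, the boxes $(1,\lambda_i),\ldots,(i-1,\lambda_i)$ always exist, so the transferred segment is a full column of $T$ (and likewise of $S'$) exactly when $i=1$; whenever $i>1$ there are anchor boxes above it, regardless of whether $\lambda_{i-1}>\lambda_i$ or $\mu_{i-1}>\mu_i+1$. In particular, under your stated "clean" hypotheses with $i>1$ the identity $\spe_{(T',S')}=\bigl(\prod_{j=i}^k\mathbf{x}_{a_j}\bigr)\spe_{(T,S)}$ is false (the two sides differ by exactly the anchor factors you later call spurious), so the easy case covers only $i=1$ and the generic case is precisely the one left open. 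A smaller point: case (4) is analogous but not literally symmetric to case (3), because the factor $\prod\mathbf{x}_k$ is attached to the second partition; in the paper the moved column lengths are $a+b+1$ versus $b$ and the auxiliary generator is multiplied by $\prod_{i\in A}\mathbf{x}_i$ in a slightly different way, so it requires its own (short) verification rather than a formal swap of $\lambda$ and $\mu$.
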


\begin{proof}
It is sufficient to prove the theorem in the four covering cases in Theorem~\ref{thm:Bn covering classification}. 

In cases (1) and (2), we have in particular ($\lambda' \unlhd \lambda$ and $\mu' = \mu$) or ($\mu' \unlhd \mu$ and $\lambda' = \lambda$), and the result follows from the proof of (\cite[Theorem 1]{moustrou2021symmetric}), combined with the definition of $B_n$-Specht polynomials.

Now, we consider case (3). 
In this case, to go from $(\lambda', \mu')$ to $(\lambda, \mu)$, we remove a number $a$ of boxes from a column $U_1$ in $\mu'$, that will be added to a column $U_2$ in $\lambda$. 
We can restrict our attention to these two columns. Let $b=\mid U_2\mid$, we then have $\mid U_1 \mid = a +\mid U_2 \mid = a+b$. Let $A=\{1,\ldots,a\}$, $B_1=\{a+1,\ldots,a+b\}$ and $B_2=\{a+b+1,\ldots,a+2b\}$. Up to permutation, it suffices to show that the polynomial 
\[P(\mathbf{x})=\Delta_{B_2}(\mathbf{x}^2)\Delta_{A \cup B_1}(\mathbf{x}^2) \prod_{i \in A \cup B_1}\mathbf{x}_i\] 
is in the ideal generated by polynomials of the form $\Delta_{S}(\mathbf{x}^2) \Delta_{\overline{S}}(\mathbf{x}^2) \prod_{i\in \overline{S}} \mathbf{x}_i $, where $ \{1, \ldots, a+2b\}$ is the disjoint union of $S$ and $\overline{S}$, and $\mid S \mid = a + b$.
Let us consider 
\[
Q(\mathbf{x}) = \Delta_{A \cup B_2}(\mathbf{x}^2)\Delta_{B_1}(\mathbf{x}^2) \prod_{i \in B_1} \mathbf{x}_{i},
\]
which is a polynomial of the expected form, and
\[
\begin{aligned}
\tilde{Q}(\mathbf{x}) &=  Q(\mathbf{x}) \prod_{i \in A} \mathbf{x}_i 
\\
&= \Delta_{A \cup B_2}(\mathbf{x}^2)\Delta_{B_1}(\mathbf{x}^2) \prod_{i \in A \cup B_1} \mathbf{x}_{i}. 
\end{aligned}
\]
Note that we have:
\[
\deg (P) = b(b-1) + (a+b) (a+b-1) + a+b = 2b^2 + a^2 + b(2a-1) 
\]
and 
\[
\deg (Q) = (a+b)(a+b-1) + b(b-1) +b = 2b^2 + a^2  + b(2a-1) -a 
\]
so that $P$ and $\tilde{Q}$ have the same degree. We are going to show that $P$ is a combination of $\epsilon(\sigma)\sigma \cdot \tilde{Q}$ for $\sigma$'s in $G=\mathcal{S}_{A \cup B_1}$. 
Note that we can rewrite 
\[
\Delta_{A \cup B_2}(\mathbf{x}^2) = \Delta_A(\mathbf{x}^2)\Delta_{B_2}(\mathbf{x}^2) \prod_{i \in A} R(\mathbf{x}_i^2)
\]
where 
\[
R(y) = \prod_{j\in B_2} (y-\mathbf{x}_j^2).
\]
Since $\prod_{i \in A \cup B_1} \mathbf{x}_{i}$ and $\Delta_{B_2}(\mathbf{x}^2)$ are invariant by $G$, we can factor them out and focus on the remaining terms, and we look therefore at
\[
P^*(\mathbf{x}) = \Delta_{A \cup B_1}(\mathbf{x}^2)
\]
and 

\[
Q^*(\mathbf{x}) = \Delta_A(\mathbf{x}^2)\Delta_{B_1}(\mathbf{x}^2)\prod_{i\in A} R(\mathbf{x}_i^2).
\]
Also consider the subgroup $H=\mathcal{S}_{A}\times \mathcal{S}_{B_1}$ of $G$. Then, for $\tau_1 \in \mathcal{S}_{A}$, $\tau_2 \in \mathcal{S}_{B_1}$, we have $\tau_1 \tau_2(\Delta_A(\mathbf{x}^2)) = \epsilon(\tau_1) (\Delta_A(\mathbf{x}^2))$ and 
$\tau_1 \tau_2(\Delta_{B_1}(\mathbf{x}^2)) = \epsilon(\tau_2) (\Delta_{B_1}(\mathbf{x}^2))$, and because $\prod_{i \in A} R(\mathbf{x}_i^2)$ is $H$-invariant, we get
\[
\epsilon(\tau_1 \tau_2) \tau_1 \tau_2 Q^* = Q^*,
\]
allowing us to consider the sum 
\[
\overline{Q} = \sum_{\sigma \in G/H} \epsilon(\sigma) \sigma Q^*, 
\]
and we claim that $P^* = \overline{Q}$.

First, we show that $P^*$ divides $\overline{Q}$, namely that for every $i\neq j \in A \cup B_1$, $\mathbf{x}_i^2 - \mathbf{x}_j^2$ divides $\overline{Q}$.  
Since for every $\sigma\in G$, $\sigma \overline{Q} = \pm \overline{Q}$, and $G$ acts transitively on pairs $(i,j)$, it is enough to check that $\mathbf{x}_1^2-\mathbf{x}_2^2$ divides $\overline{Q}$. 
We hence have to show that $\overline{Q}$ vanishes when imposing $\mathbf{x}_1^2 = \mathbf{x}_2^2$.
To see this, first observe that the terms in the sum are in correspondence with set partitions $K \cup \overline{K}$ of $A \cup B_1$, where $\mid K \mid = a$. Indeed, up to permutation by elements of $H$, we only need to choose where to send the subset $A=\{1, \ldots, a\}$.
Now, if $\sigma$ sends $1$ and $2$ in the same subset, the corresponding Vandermonde determinant in $\epsilon (\sigma) \sigma Q^*$ vanishes whenever $\mathbf{x}_1^2=\mathbf{x}_2^2$.
We then only need to focus on partitions where $1$ and $2$ are not in the same subset. 
There are two kinds of such partitions: those with $1\in K$ and $2 \in \overline{K}$, and those $2\in K$ and $1 \in \overline{K}$. The transposition $(12)$ naturally induces a bijection between these sets of partitions. If $\sigma \in G$ is a representative for a partition of the first kind, then $(1 2)\sigma $ is a representative for the corresponding partition of the second kind.
When $\mathbf{x}_1^2=\mathbf{x}_2^2$, we have $(12) \sigma Q^* (\mathbf{x}) = \sigma Q^*(\mathbf{x})$, and because $\epsilon ((12) \sigma) = - \epsilon(\sigma)$, the two corresponding terms cancel out. 

Then, we need to check that $P^*$ and $\overline{Q}$ have the same leading term with respect to the lexicographical ordering.   
The leading term of $P^*$ is $\mathbf{x}_1^{2(a+b-1)}\mathbf{x}_2^{2(a+b-2)} \cdots \mathbf{x}_{a+b-1}^2$. Then, for $\sigma \in G$ sending $\{1, \ldots, k\} $ onto $K$, the partial degree of $\sigma  Q^*$ in $\mathbf{x}_1$ is
\[
\begin{cases}
2(a-1) + 2b & \text{ if } 1 \in K
\\
2(b-1)  & \text{ if } 1 \notin K
\end{cases}
\]
and therefore $\sigma  Q^*$ can give a contribution to the leading term of $\overline{Q}$ only if $1 \in K$. By the same argument, $2$ has to be in $K$, and in the end, $K = \{1, \ldots, a\}$: Indeed, assume there is a minimal $i \leqslant a$ with $i \notin K$. Then, the leading term of $\sigma  Q^*$ is of the form $\mathbf{x}_1^{2(a+b-1)}\mathbf{x}_2^{2(a+b-2)}\cdots \mathbf{x}_{j-1}^{2(a+b-j+1)} \mathbf{x}_j^{2(b-1)} m$ where $m$ is a monomial in the variables $\mathbf{x}_{j+1},\ldots,\mathbf{x}_{a+b}$. Since, $j\leq a$, then $2(b-1) < 2(a+b-j)$, and therefore the leading monomial of $\sigma  Q^*$ is strictly lower than that of $Q^*$.
Thus, the leading term of $\overline{Q}$ is exactly the leading term of $Q^*$, which is $\mathbf{x}_1^{2(a+b-1)}\mathbf{x}_2^{2(a+b-2)} \cdots \mathbf{x}_{a+b-1}^2$, as expected. This concludes the covering case (3).

The proof for the covering case (4) is very similar. In this situation, to go from $(\lambda', \mu')$ to $(\lambda, \mu)$, we remove $a$ boxes from a column $U_1$ in $\lambda'$, before adding them to a column $U_2$ in $\mu$, with $\mid U_2 \mid = b$ and $\mid U_1 \mid = a+b+1 $.
We can apply the previous argument, where this time $A = \{1, \ldots, a\}$, $B_1 = \{a+1, \ldots, a+b+1 \}$, $B_2=\{a + b + 2, \ldots, a+2b+1\}$, 
\[
P = \Delta_{A \cup B_1}(\mathbf{x}^2)\Delta_{B_2}(\mathbf{x}^2) \prod_{i \in  B_2}\mathbf{x}_i
\]
\[
Q =\Delta_{B_1}(\mathbf{x}^2)\Delta_{A \cup B_2}(\mathbf{x}^2)\prod_{i \in A \cup B_2}\mathbf{x}_i 
\]
 and
\[
\begin{aligned}
\tilde{Q} &=   (\prod_{j \in A} \mathbf{x}_{j}) Q 
\\
&= \Delta_{B_1}(\mathbf{x}^2)\Delta_{A \cup B_2}(\mathbf{x}^2)\prod_{i \in A}\mathbf{x}_i^2 \prod_{i \in B_2}\mathbf{x}_i. 
\end{aligned}
\]

\end{proof}

The second implication ((2) implies (3)) of Theorem~\ref{thm:equivalence} is clear, it remains to prove that (3) implies (1):

\begin{proposition} \label{prop:varieties and dominance order}
Let $(\lambda,\mu),(\vartheta,\omega)$ be bipartitions and $V_{(\lambda,\mu)} \subset V_{(\vartheta,\omega)}$. Then, $(\lambda,\mu) \unrhd (\vartheta,\omega)$.
\end{proposition}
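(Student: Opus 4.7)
The plan is to prove the contrapositive: assuming $(\lambda, \mu) \not\unrhd (\vartheta, \omega)$, exhibit a point $z \in V_{(\lambda, \mu)} \setminus V_{(\vartheta, \omega)}$. The natural strategy is to build a ``generic'' point attached to $(\vartheta, \omega)$, whose multiplicity pattern reflects the row structure of the bidiagram. Concretely, I would associate to the bidiagram of $(\vartheta, \omega)$ an indexing of $[n]$ by cells, and for each row $i$ choose a scalar $c_i \in \mathbb{K}$: the cells in row $i$ of $\vartheta$ and row $i$ of $\omega$ all receive $\pm c_i$, with the sign chosen so that the $\omega$-side entries are nonzero. The $c_i$ are taken pairwise distinct, and $c_i = 0$ is allowed only when $\omega_i = 0$ (so $\omega$-columns never receive a zero).

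For the nonvanishing step, I would take the ``identity'' bitableau $(T^*, S^*)$ where each column of $\vartheta$ (resp.\ $\omega$) is filled top-to-bottom by the indices of its own cells. By Lemma~\ref{lemma:spechtofsums},
\[
\spe_{(T^*, S^*)}(z) \;=\; \spe_{T^*\uplus S^*}(z^2) \prod_{k\in S^*} z_k,
\]
and both factors are visibly nonzero with the above construction (distinct $c_i$'s per column, nonzero $\omega$-values), proving $z \notin V_{(\vartheta, \omega)}$.

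For the vanishing step, I apply the same lemma to every bitableau $(T, S)$ of shape $(\lambda, \mu)$ and split according to which part of bidominance fails. In \emph{Case A}, where $\vartheta\uplus\omega \not\unlhd \lambda\uplus\mu$, one chooses all $c_i \neq 0$ so that the $\mathcal{S}_n$-orbit type of $z^2$ equals $\vartheta \uplus \omega$; then Theorem~\ref{thm:Sn orbit decomposition} applied to $z^2$ kills the factor $\spe_{T \uplus S}(z^2)$ for every $(T, S)$ of shape $(\lambda, \mu)$. In \emph{Case B}, where the first inequality holds but the second fails at some index $k$, I would introduce zero coordinates in the rows beyond $\len(\omega)$ (or more generally rearrange the $c_i$'s to merge certain classes) so that the failure inequality $\sum_{j\leq k}\vartheta_j + \sum_{j<k}\omega_j > \sum_{j\leq k}\lambda_j + \sum_{j<k}\mu_j$ translates into a counting obstruction: in any filling of $(\lambda, \mu)$ that keeps $\spe_{T\uplus S}(z^2)\neq 0$, there are too many zero indices to fit inside the available $\lambda$-columns, so at least one must appear in $S$, killing $\prod_{k\in S} z_k$.

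The main obstacle is Case B: building a single point $z$ that simultaneously witnesses Step 1 and handles every $(\lambda, \mu) \not\unrhd (\vartheta, \omega)$ requires balancing the number of zero coordinates and their placement. It may be cleanest to allow the auxiliary point to depend mildly on the failure index $k$, or to prove the statement by induction along the covering relations of Theorem~\ref{thm:Bn covering classification} -- in each covering case, one exhibits a dedicated point $z$ tailored to the elementary move (zero placement for cases~(1)--(2), and mixed nonzero/zero patterns adapted to the column-shifts of cases~(3)--(4)). The Hall-type counting linking the second bidominance inequality to the impossibility of assigning the zeros of $z$ into the $\vartheta$-columns of $(\lambda, \mu)$ is the combinatorial heart of the argument.
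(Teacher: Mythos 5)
Your Case A is sound and is essentially the paper's own argument in contrapositive form: the point whose coordinates are constant (with pairwise distinct squares, all nonzero) along the rows of $\vartheta\uplus\omega$ is exactly the point of Lemma~\ref{lem:z1}, and combining Lemma~\ref{lemma:spechtofsums} with the $\mathcal{S}_n$-statement (Theorem~\ref{thm:Sn orbit decomposition}) handles the failure of the glueing condition $\vartheta\uplus\omega\unlhd\lambda\uplus\mu$. Also note the paper does not need a single separating point nor the contrapositive at all: it argues directly with two test points, each lying outside $V_{(\vartheta,\omega)}$ and hence outside $V_{(\lambda,\mu)}$, one point per family of inequalities; your case split plays the same role.

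The genuine gap is Case B, which you yourself flag as the main obstacle: the point you describe does not work, and the missing construction is precisely the content of Lemma~\ref{lem:z2}. Your recipe (same-row pairing of $\vartheta_i$ and $\omega_i$, zeros only in rows beyond $\len(\omega)$) fails already for $(\vartheta,\omega)=((2),(2))$ and $(\lambda,\mu)=((1),(3))$ in $\K^4$: here the glueings agree but $\vartheta_1=2>\lambda_1=1$, so the second inequality fails at $k=1$; since $\vartheta$ has no row beyond $\len(\omega)=1$, your construction yields $z=(\pm c_1,\pm c_1,\pm c_1,\pm c_1)$ with $c_1\neq 0$, and the Specht polynomial $\x_i\x_j\x_k$ of shape $((1),(3))$ does not vanish there, so $z\notin V_{(\lambda,\mu)}$. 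The correct point is $(0,0,a,a)$, i.e.\ the \emph{shifted} merging of Lemma~\ref{lem:z2}: $\vartheta_1$ zeros followed by blocks of size $\omega_j+\vartheta_{j+1}$ filled with a common nonzero value $a_j$ (zeros fill the \emph{first} row of $\vartheta$, and row $j$ of $\omega$ is merged with row $j+1$ of $\vartheta$). This shift is needed both for the nonvanishing step at $(\vartheta,\omega)$ and for the counting step, and your counting claim is also too weak: ``too many zeros to fit in the $\lambda$-columns'' only yields $\lambda_1\geq\vartheta_1$ ($k=1$); for general $k$ one must pigeonhole the zeros \emph{together with} the values $a_1,\ldots,a_k$ (equal squares are forced into distinct columns, and sorting each column forces these values into the first $k$ rows of $S$ and first $k+1$ rows of $T$), giving $\sum_{j=1}^{k}(\lambda_j+\mu_j)+\lambda_{k+1}\geq\sum_{j=1}^{k}(\vartheta_j+\omega_j)+\vartheta_{k+1}$. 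Your fallback of inducting along the covering relations of Theorem~\ref{thm:Bn covering classification} does not repair this, since the negation of bidominance (in particular incomparability) does not decompose along covering chains.
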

To prove this implication, we will consider two types of points in $\K^n$:
\begin{lemma}\label{lem:z1}
Let $(\vartheta,\omega) \in \BP_n$ and $\Lambda = \vartheta \uplus \omega$ be a partition of $n$. Consider the point
\[
z  = (\underbrace{a_1,\ldots,a_1}_{\Lambda_1},\underbrace{a_2,\ldots,a_2}_{\Lambda_2},\ldots,\underbrace{a_m,\ldots,a_m}_{\Lambda_{m}})
\] with $a_i^2 \neq a_j^2$ if $i \neq j$ and $a_i \neq 0$ if $i\leqslant \len(\omega)$.

\begin{enumerate}
    \item[i)]  $z \notin V_{(\vartheta,\omega)}$.
    \item[ii)] If $(\lambda, \mu) \in \BP_n$ is a bipartition such that $z \notin V_{(\lambda, \mu )}$, then $\lambda \uplus \mu \trianglerighteq \Lambda$.
\end{enumerate}

\end{lemma}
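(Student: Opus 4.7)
The plan is to handle the two parts independently: for part~(i), I would exhibit an explicit bitableau of shape $(\vartheta,\omega)$ whose Specht polynomial does not vanish at $z$; for part~(ii), I would reduce to the $\mathcal{S}_n$-Specht variety theory via Lemma~\ref{lemma:spechtofsums} and Theorem~\ref{thm:Sn orbit decomposition}.

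For part~(i), I would first group the coordinates of $z$: for each $i$, let $G_i \subseteq [n]$ be the set of coordinates at which $z$ takes the value $a_i$, so that $|G_i| = \Lambda_i = \vartheta_i + \omega_i$. I would then construct $(T,S)$ by putting any $\vartheta_i$ elements of $G_i$ in row~$i$ of $T$ and the remaining $\omega_i$ elements of $G_i$ in row~$i$ of $S$. By Proposition~\ref{prop:gluing of tableau}, row~$i$ of the glued tableau $T \uplus S$ then consists exactly of the set $G_i$, so entries in the same column of $T \uplus S$ lie in pairwise distinct groups. Lemma~\ref{lemma:spechtofsums} then gives
\[
\spe_{(T,S)}(z) \;=\; \spe_{T \uplus S}(z^2)\prod_{k\in S} z_k \;=\; \Bigl(\prod_{j\geq 1} \Delta_{C_j}(z^2)\Bigr)\prod_{i=1}^{\len(\omega)} a_i^{\omega_i},
\]
where $C_j$ denotes the $j$-th column of $T \uplus S$. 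Each column Vandermonde is a product of nonzero factors $a_i^2 - a_{i'}^2$, and the monomial factor is nonzero by the hypothesis $a_i \neq 0$ for $i \leq \len(\omega)$.

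For part~(ii), I would use the assumption $z \notin V_{(\lambda,\mu)}$ to pick a bitableau $(T,S)$ of shape $(\lambda,\mu)$ with $\spe_{(T,S)}(z) \neq 0$. Lemma~\ref{lemma:spechtofsums} then forces $\spe_{T\uplus S}(z^2) \neq 0$, i.e.\ $z^2$ lies outside the $\mathcal{S}_n$-Specht variety $V_{\lambda \uplus \mu}$. Since the values $a_i^2$ are pairwise distinct and appear with multiplicities $\Lambda_1, \Lambda_2,\ldots$, the $\mathcal{S}_n$-orbit type of $z^2$ is exactly $\Lambda$, and Theorem~\ref{thm:Sn orbit decomposition} yields $\Lambda \unlhd \lambda \uplus \mu$.

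The main subtle point is in part~(i): I need to control simultaneously the column structure of $T \uplus S$ (which governs the Vandermonde factors) and the partition of indices between $T$ and $S$ (which governs the monomial factor $\prod_{k \in S} z_k$). The glueing bijection of Proposition~\ref{prop:gluing of tableau} is exactly what makes the row-based filling translate cleanly to a valid bitableau, and the hypothesis $a_i \neq 0$ for $i \leq \len(\omega)$ is exactly what is needed so that no entry of $S$ is placed on a zero coordinate. Part~(ii) is then a rather direct translation to the known $\mathcal{S}_n$ case.
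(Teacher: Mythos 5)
Your proposal is correct and follows essentially the same route as the paper: for (i) the paper also fills a bitableau of shape $(\vartheta,\omega)$ row-by-row with the values $a_i$ (equivalently, your index sets $G_i$), so that column entries have distinct squares and no entry of $S$ is zero, and for (ii) it likewise passes through Lemma~\ref{lemma:spechtofsums} to $z^2 \notin V_{\lambda\uplus\mu}$ and invokes the known $\mathcal{S}_n$-orbit-type result to get $\Lambda \unlhd \lambda\uplus\mu$. No gaps; your version just spells out the evaluation of $\spe_{(T,S)}(z)$ a bit more explicitly.
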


\begin{proof}

\begin{enumerate}[leftmargin=11pt]
    \item[i)] Let $(T,S)$ be the generalized bitableau of shape $(\vartheta,\omega)$ which has the filling 

\[ \left(\label{eq:bitableaux}\ytableausetup{boxsize=1.8em}
\begin{ytableau}
        a_1 & a_1 & \none[\cdots] & a_1 \\
         a_2 & a_2 & \none[\cdots]& \none \\
         \none[\vdots] & \none & \none & \none  \\
        \none[\vdots] & \none & \none & \none  \\
         a_h &\none[\cdots]
\end{ytableau}     \; \hspace{1cm} , \hspace{1cm} \;\begin{ytableau}
        a_1 & a_1 & \none[\cdots]& \none[\cdots] & a_1 \\
         a_2 & a_2 & \none[\cdots]& \none[\cdots]& \none[] \\
        \none[\vdots] & \none & \none & \none &\none[] \\
         a_l & \none[\cdots] & \none & \none\\
         \none[]
\end{ytableau} 
\right), \]
i.e., the $i$-th row of both $T$ and $S$ contains only $a_i$'s.
The assumption $a_i\neq 0$ for $i\leqslant \len(\omega)$ ensures that $S$ contains no $0$ entry, and by construction the squares of column entries are pairwise different. Thus, $z \in V_{(\vartheta,\omega)}^c$.
    \item[ii)] By assumption, there is a bitableau $(T,S)$ of shape $(\lambda, \mu)$ such that $\spe_{(T,S)}(z) \neq 0$. Then, according to Lemma~\ref{lemma:spechtofsums}, there is a tableau $U = T \uplus S$ of shape $\lambda \uplus \mu$ such that $\spe_U(z^2)\neq 0$. Therefore $z^2$ does not belong to the $\mathcal{S}_n$-Specht variety $V_{\lambda \uplus \mu}$, and since $z$ and $z^2$ have the same $S_n$-orbit type, (\cite[Prop 1.ii)]{moustrou2021symmetric}) gives
\[
\Lambda = (\Lambda_1,\ldots,\Lambda_m)  \trianglelefteq \lambda \uplus \mu, 
\]
which proves the lemma.
\end{enumerate}

\end{proof}

\begin{lemma}\label{lem:z2}
Let $(\vartheta,\omega) \in \BP_n$ be a bipartition of $n$. Let $m = \max\{\len(\vartheta), \len(\omega)\}$. Consider the point
\[
z = (\underbrace{0,\ldots,0}_{\vartheta_1},\underbrace{a_1,\ldots,a_1}_{\omega_1+\vartheta_2},\underbrace{a_2\ldots,a_2}_{\omega_2+\vartheta_3},\ldots,\underbrace{a_m,\ldots,a_m}_{\omega_m+\vartheta_{m+1}})
\] with $a_i^2 \neq a_j^2$ if $i \neq j$ and $a_i \neq 0$.
Then:
\begin{itemize}
    \item[i)]  $z \notin V_{(\vartheta,\omega)}$.
    \item[ii)] If $(\lambda, \mu) \in \BP_n$ is a bipartition such that $z \notin V_{(\lambda, \mu )}$, then \[ \sum_{j=1}^{k-1} (\lambda_j + \mu_j) + \lambda_{k} \geq   \sum_{j=1}^{k-1} (\vartheta_j + \omega_j) + \vartheta_{k}\]
for any integer $k \geq 1$.
\end{itemize}

\end{lemma}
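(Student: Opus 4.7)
The plan is to handle parts~(i) and~(ii) separately, both via counting arguments based on the ``blocks'' of coordinates of $z$ that share a common squared value. Denote $B_0 = \{i : z_i = 0\}$, which has $\vartheta_1$ elements, and for $j \geq 1$ set $B_j = \{i : z_i = a_j\}$, which has $\omega_j + \vartheta_{j+1}$ elements. Since the $a_j^2$ are pairwise distinct and non-zero, two indices have the same value of $z^2$ if and only if they lie in a common block $B_j$.

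For part~(i), I would exhibit an explicit bitableau $(T_0, S_0)$ of shape $(\vartheta, \omega)$ whose Specht polynomial does not vanish on $z$. The recipe is to place $B_0$ in row~$1$ of $T_0$ and, for each $j \geq 1$, distribute $B_j$ across row~$(j+1)$ of $T_0$ (which has $\vartheta_{j+1}$ boxes) and row~$j$ of $S_0$ (which has $\omega_j$ boxes). Every column of $T_0$ then reads, top to bottom, indices drawn from distinct blocks $B_0, B_1, B_2, \ldots$, and every column of $S_0$ reads indices drawn from distinct blocks $B_1, B_2, \ldots$. Hence the squared entries in each column are pairwise distinct, and $S_0$ contains no index from $B_0$, so $\spe_{(T_0,S_0)}(z) \neq 0$.

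For part~(ii), I would start from any bitableau $(T,S)$ of shape $(\lambda,\mu)$ with $\spe_{(T,S)}(z) \neq 0$ and translate this non-vanishing, via the factorisation
\[
\spe_{(T,S)}(\mathbf{x}) \;=\; \prod_i \Delta_{T_i}(\mathbf{x}^2)\, \prod_j \Delta_{S_j}(\mathbf{x}^2)\, \prod_{k \in S} \mathbf{x}_k,
\]
into two combinatorial conditions: (a)~within every column of $T$ or of $S$ the chosen indices lie in pairwise distinct blocks, and (b)~no index of $B_0$ appears in $S$. Now fix $k \geq 1$ and count the indices lying in $B_0 \cup B_1 \cup \cdots \cup B_{k-1}$ in two ways. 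On the one hand, by construction of $z$ this count is
\[
\vartheta_1 + \sum_{j=1}^{k-1}(\omega_j + \vartheta_{j+1}) \;=\; \sum_{j=1}^{k-1}(\vartheta_j + \omega_j) + \vartheta_k.
\]
On the other hand, by (a) each column of $T$ of height $\lambda'_j$ contributes at most $\min(\lambda'_j, k)$ such indices, summing to $\sum_{j=1}^k \lambda_j$; by (a) and (b), each column of $S$ of height $\mu'_j$ contributes at most $\min(\mu'_j, k-1)$, summing to $\sum_{j=1}^{k-1}\mu_j$. Together these give the upper bound $\sum_{j=1}^{k-1}(\lambda_j + \mu_j) + \lambda_k$, which is exactly the desired inequality.

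The only real conceptual step I anticipate is the passage from $\spe_{(T,S)}(z) \neq 0$ to the asymmetric pair of conditions~(a) and~(b); condition~(b) is the new ingredient compared to the $\mathcal{S}_n$ case treated in Lemma~\ref{lem:z1}, and it is precisely what produces the asymmetry between $\lambda$ and $\mu$ on the right-hand side (only $\lambda_k$ appears, not $\mu_k$). Once this translation is in place, the rest is a routine per-column, per-block counting argument that parallels the $\mathcal{S}_n$-case.
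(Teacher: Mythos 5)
Your proposal is correct and follows essentially the same route as the paper: part (i) uses the identical explicit filling (zeros in the first row of $T$, block $B_j$ split between row $j+1$ of $T$ and row $j$ of $S$), and part (ii) is the same pigeonhole argument, with the paper sorting column entries and bounding by the first $k+1$ rows of $T$ and $k$ rows of $S$ while you count column-by-column via $\sum_j \min(\lambda^\perp_j,k)=\sum_{j=1}^k\lambda_j$ — an equivalent bookkeeping. Your conditions (a) and (b) are exactly the translation of $\spe_{(T,S)}(z)\neq 0$ that the paper uses, so there is no gap.
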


\begin{proof}

\begin{enumerate}[leftmargin=11pt]
    \item[i)] Let $(T,S)$ be the generalized bitableau of shape $(\vartheta,\omega)$ which has the filling \[\left(\label{eq:bitableaux2}
\begin{ytableau}
        0 & 0 & \none[\cdots] & 0 \\
         a_1 & a_1 & \none[\cdots]& \none \\
        \none[\vdots] & \none & \none & \none  \\
         a_h & \none[\cdots] & \none & \none\\
         \none[]
\end{ytableau} \hspace{1cm} , \hspace{1cm} 
\begin{ytableau}
        a_1 & a_1 & \none[\cdots] & a_1 \\
         a_2 & a_2 & \none[\cdots]& \none \\
         \none[\vdots] & \none & \none & \none  \\
        \none[\vdots] & \none & \none & \none  \\
         a_l & \none[\cdots] & \none & \none,
\end{ytableau}    
\right)\]
i.e., $\vartheta_i$ contains only $a_{i-1}$'s and $\omega_i$ contains only $a_i$'s, where $a_0 = 0$.
We observe that no entry in $S$ equals $0$ and the squares of column entries are pairwise different. Thus, we have $z \in V_{(\vartheta,\omega)}^c$.
    \item[ii)] By assumption, there exists a bitableau $(T,S)$ of shape $(\lambda,\mu)$ such that \[0 \neq \spe_{(T,S)}(z) = \spe_T(z^2)\spe_S(z^2) \cdot \prod_{j \in S} z_j.\] Let $(T^*,S^*)$ be the generalized bitableau obtained from $(T,S)$ by replacing $i$ with $z_i$ in any box. This means that the zeros of $z$ are written in $T^*$ and no column in $T^*$ or $S^*$ contains entries with equal squares. Since permutation of the column entries can only change the sign of $\spe_{(T,S)}(z)$, we can assume that the entries in every column in $(T^*,S^*)$ are sorted increasingly by the indices of the $a_i$'s from above to below, and with $a_0=0$. \\
We obtain that all the $0$'s must be written in the first row of $T^*$ which implies $\lambda_1 \geq \vartheta_1$. Now, for an integer $k \geq 1$ the $a_k$'s in $(T^*,S^*)$ must be written in different columns in the generalized bitableau $(T^*,S^*)$. Since the entries in $(T^*,S^*)$ are written with increasing indices in each column from the top to the bottom, we know that the $a_j$'s with $0 \leqslant j \leqslant k$ must be written within the first $k$ rows of $S$ and the first $k+1$-rows in $T$. Thus, by the pigeon hole principle, we have \[ 
\sum_{j=1}^k (\lambda_j + \mu_j) + \lambda_{k+1} \geq   \sum_{j=1}^k (\vartheta_j + \omega_j) + \vartheta_{k+1} .\]
\end{enumerate}

\end{proof}

Now, we can prove Proposition~\ref{prop:varieties and dominance order}:

\begin{proof}[Proof of Proposition~\ref{prop:varieties and dominance order}]
The assumption is equivalent to $V_{(\vartheta,\omega)} ^c \subset  V_{(\lambda,\mu)} ^c$.
We have to prove that $\lambda \uplus \mu \trianglerighteq \vartheta \uplus \omega$ and that $\sum_{j=1}^{k-1} (\lambda_j + \mu_j) + \lambda_{k} \geq   \sum_{j=1}^{k-1} (\vartheta_j + \omega_j) + \vartheta_{k}$ for every integer $k \geq 1$.

For the first claim, consider the point
\[
z = (\underbrace{a_1,\ldots,a_1}_{\vartheta_1 + \omega_1},\underbrace{a_2,\ldots,a_2}_{\vartheta_2 + \omega_2},\ldots,\underbrace{a_m,\ldots,a_m}_{\vartheta_m + \omega_m})
\]
with $a_i^2 \neq a_j^2$ for $i\neq j$, and $a_i \neq 0$ if $i \leq \len(\omega)$.
According to i) in Lemma~\ref{lem:z1}, $z \in  V_{(\vartheta,\omega)} ^c$. By assumption, we then have $z \in  V_{(\lambda,\mu)} ^c$, and ii) in Lemma~\ref{lem:z1} gives 
\[
\lambda \uplus \mu \trianglerighteq \Lambda(z) = \vartheta \uplus \omega.
\]
For the second claim, consider the point
\[
z = (\underbrace{0,\ldots,0}_{\vartheta_1},\underbrace{a_1,\ldots,a_1}_{\omega_1+\vartheta_2},\underbrace{a_2\ldots,a_2}_{\omega_2+\vartheta_3},\ldots,\underbrace{a_m,\ldots,a_m}_{\omega_m+\vartheta_{m+1}})
\]
with $a_i^2 \neq a_j^2$ for $i\neq j$, and $a_i \neq 0$.
According to i) in Lemma~\ref{lem:z2}, $z \in  V_{(\vartheta,\omega)} ^c$. By assumption, we then have $z \in  V_{(\lambda,\mu)} ^c$, and ii) in Lemma~\ref{lem:z2} gives 
 \[ \sum_{j=1}^{k-1} (\lambda_j + \mu_j) + \lambda_{k} \geq   \sum_{j=1}^{k-1} (\vartheta_j + \omega_j) + \vartheta_{k}\]
for any integer $k \geq 1$.
\end{proof}

\section{Orbit types}\label{sec:varieties}

In this section we define orbit types of elements in $\K^n$ with respect to the action of the hyperoctahedral group. Compared with the $\mathcal{S}_n$-orbit types, they allow a finer set decomposition of $\K^n$ since one distinguishes whether coordinates are $0$ or not. This leads to a set partition of the $B_n$-Specht varieties based on the combinatorics of the poset $(\BP_n,\unlhd)$. 

Recall that if $z=(a_1,\ldots,a_n) \in \K^n$, then the $\mathcal{S}_n$-orbit type of $z$ is the unique partition $\Lambda(z) = (\Lambda_1,\ldots,\Lambda_l) \vdash n$ such that $\Stab_{\mathcal{S}_n}(z) \simeq \Z/\Lambda_1\Z \times \cdots \times \Z/\Lambda_l\Z$, or equivalently, there exists $b_1,\ldots,b_l$ pairwise distinct such that $z \in \mathcal{S}_n \cdot (\underbrace{b_1,\ldots,b_1}_{\Lambda_1},\ldots, \underbrace{b_l,\ldots,b_l}_{\Lambda_l})$.

\begin{definition} \label{def:t-cut}
Let $\lambda=(\lambda_1,\ldots,\lambda_m) \vdash n$ be a partition and $t \in \N_0$. Let $j=\min\{i : \lambda_i<t\}$ with the convention that $j= m+1$ if $t=0$. Then the \emph{$t$-cut} of $\lambda$ is the bipartition $(\rho,\sigma)$ defined as $\rho=(t,\cdots,t,\lambda_j,\cdots,\lambda_m)$ and  $\sigma=(\lambda_1-t,\cdots,\lambda_{j-1}-t)$. We denote it by $\cut (\lambda,t)$.
\end{definition}

We have $\cut (\lambda,0) = (\emptyset,\lambda)$ while $\cut (\lambda,t) = (\lambda,\emptyset)$ for any $t \geqslant \lambda_1$.
We observe that if $\cut(\lambda,t)=(\sigma, \rho)$, then $\sigma \uplus \rho = \lambda$.

We are now ready to define the $B_n$-orbit type of a point in $\K^n$ and the notion of $B_n$-orbit set:

\begin{definition} \label{def:orbittypes}
 Let $z \in \K^n$.  The \emph{$B_n$-orbit type} of $z$ is \[\Omega(z) = \cut(\Lambda(z^2),t_z)\] where $t_z$ is the number of $0$ coordinates of $z$. 

 For $(\lambda,\mu) \in \BP_n$, the \emph{$B_n$-orbit set} associated to $(\lambda,\mu)$ is then \[H_{(\lambda,\mu)} =\left\{z \in \K^n :\ \Omega(z) = (\lambda,\mu)\right\}.\]
\end{definition}

We observe that $B_n$-orbits sets might be empty. The non-empty $B_n$-orbits sets correspond to bipartitions $(\lambda,\mu)$ such that $\lambda_1= \cdots=\lambda_{\len(\mu)+1}.$ Moreover, if $(\lambda,\mu)$ is such that $H_{(\lambda,\mu)} \neq \emptyset$, then any point $z \in H_{(\lambda,\mu)}$ is of the following form: let $m=\len(\mu)$. Then there exist non-zero elements $a_1,\ldots,a_l \in \K$, with distinct squares such that \[z = \sigma \cdot (\underbrace{a_1,\ldots,a_1}_{\lambda_1+\mu_1},\ldots,\underbrace{a_{m},\ldots,a_{m}}_{\lambda_1+\mu_m},\underbrace{0,\ldots,0}_{\lambda_1},\underbrace{a_{m+1},\ldots,a_{m+1}}_{\lambda_{m+2}},\ldots,\underbrace{a_l,\ldots,a_l}_{\lambda_{l+1}})\] for some $\sigma \in B_n$. It is straightforward that $l=m$ if $\len(\lambda)\leqslant m$ (which implies $\lambda = \emptyset$) and  $l=\len(\lambda) -1$ if $\len(\lambda)> m$.
With such a point of view, this notion of $B_n$-orbit type is a natural generalization of $\mathcal{S}_n$-orbit type in terms of hyperplane arrangements. While $\mathcal{S}_n$-orbit types correspond with unions of intersections of hyperplanes of type $A_{n-1}$ of the form $x_i-x_j = 0$, $B_n$-orbit types correspond with unions of intersections of hyperplanes of type $B_n$, of the form $x_i\pm x_j = 0$ and $x_i = 0$. 

\begin{example}
We present the orbit types of $\K^3$.  Let $a,b,c \in \K^*$ be such that they have distinct squares.
    \begin{center}
\begin{tabular}{ c|c } 
$z$ & $\Omega(z)$  \\
 \hline
 $(0,0,0)$ &  $((3),\emptyset)$ \\
$(\pm a,0,0)$ &  $((2,1),\emptyset)$ \\
$(\pm a, \pm b,0)$ & $((1,1,1),\emptyset)$  \\
 $(\pm a, \pm a,0)$ & $((1,1),(1))$ \\
$(\pm a,\pm a, \pm a)$ & $(\emptyset,(3))$  \\
$(\pm a,\pm a,\pm b)$ & $(\emptyset,(2,1))$  \\
$(\pm a,\pm b,\pm c)$ & $(\emptyset,(1,1,1))$ 
\end{tabular}
\end{center}
The remaining bipartitions $((2),(1)),((1),(2)),((1),(1,1))$ of $3$ have an empty orbit set.
\end{example}

The following proposition follows from the previous definitions and comments:

\begin{proposition}\label{prop:from Bn to Sn orbit type}
 Let $z \in \K^n$ and $(\lambda,\mu)=\Omega(z)$. Then $\Lambda(z^2)= \lambda \uplus \mu$.
 Moreover, the $B_n$-orbit sets define a set partition of $\K^n$, namely $\K^n = \biguplus_{(\lambda,\mu) \in \BP_n} H_{(\lambda,\mu)}$.
\end{proposition}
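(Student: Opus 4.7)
The plan is to unwind the definitions carefully and then observe that both assertions are essentially bookkeeping consequences of Definition~\ref{def:t-cut} and the fact that a point in $\K^n$ has a well-defined number of zero coordinates and a well-defined $\mathcal{S}_n$-orbit type under the squaring map $z \mapsto z^2$.

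For the first assertion, I start from $(\lambda,\mu) = \Omega(z) = \cut(\Lambda(z^2), t_z)$. The key observation is the one already recorded just after Definition~\ref{def:t-cut}: for any partition $\nu$ and any $t \in \N_0$, if $\cut(\nu,t) = (\rho,\sigma)$ with $j = \min\{i : \nu_i < t\}$, then a direct case split gives $\rho_i + \sigma_i = t + (\nu_i - t) = \nu_i$ for $i < j$ and $\rho_i + \sigma_i = \nu_i + 0 = \nu_i$ for $i \geqslant j$, so that $\rho \uplus \sigma = \nu$. Applying this identity to $\nu = \Lambda(z^2)$ and $t = t_z$ yields $\lambda \uplus \mu = \Lambda(z^2)$, which is the first claim.

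For the second assertion, I need to check coverage and disjointness of the sets $H_{(\lambda,\mu)}$. Coverage is immediate: every $z \in \K^n$ has a well-defined number of zero coordinates $t_z$ and a well-defined $\mathcal{S}_n$-orbit type $\Lambda(z^2)$ of its squared image, so $\Omega(z)$ is a well-defined element of $\BP_n$ (non-negativity of the entries of both parts follows from $\lambda_i \geqslant t$ for $i < j$ in Definition~\ref{def:t-cut}), and thus $z \in H_{\Omega(z)}$. Disjointness is then tautological: if $z \in H_{(\lambda,\mu)} \cap H_{(\lambda',\mu')}$, then $(\lambda,\mu) = \Omega(z) = (\lambda',\mu')$, so the indices collapse. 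This gives the desired set partition $\K^n = \biguplus_{(\lambda,\mu)\in \BP_n} H_{(\lambda,\mu)}$.

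There is no real obstacle here beyond being careful with the indexing in the cut operation; once the identity $\rho \uplus \sigma = \nu$ is isolated, both statements are one line each. The only substantive content worth flagging in the writeup is that some $H_{(\lambda,\mu)}$ may be empty (this is already discussed right after Definition~\ref{def:orbittypes}), but emptiness does not affect the disjoint-union decomposition.
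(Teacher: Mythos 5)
Your proof is correct and matches the paper's intended argument: the paper offers no separate proof beyond remarking that the proposition ``follows from the previous definitions and comments,'' namely the observation after Definition~\ref{def:t-cut} that $\cut(\Lambda(z^2),t_z)$ glues back to $\Lambda(z^2)$, plus the fact that the $H_{(\lambda,\mu)}$ are fibers of the map $z\mapsto\Omega(z)$. Your careful unwinding of the cut identity and the coverage/disjointness check is exactly this bookkeeping, so nothing further is needed.
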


\begin{proposition}\label{prop:not in variety and dominance}
Let $z \in \K^n$, and $(\lambda,\mu)  \in \BP_n$. Then: \begin{enumerate}
    \item $z \not \in V_{\Omega(z)}$,
    \item $z \not \in V_{(\lambda,\mu)} \Rightarrow (\lambda,\mu) \unrhd \Omega(z)$.
\end{enumerate}
\end{proposition}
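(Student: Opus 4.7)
The plan is to establish (1) and (2) separately. Statement (1) will follow from an explicit construction of a bitableau of shape $\Omega(z)$ whose Specht polynomial does not vanish at $z$, while statement (2) will combine the $\mathcal{S}_n$-analogue given by Theorem~\ref{thm:Sn orbit decomposition} with a pigeon-hole argument in the spirit of the proof of Lemma~\ref{lem:z2}(ii).

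For (1), set $(\lambda,\mu)=\Omega(z)$. Since $V_{(\lambda,\mu)}$ is $B_n$-invariant, I may place $z$ in the canonical orbit-set representative given just after Definition~\ref{def:orbittypes}: with $m=\len(\mu)$ (so $\lambda_1=\cdots=\lambda_{m+1}$),
\[z=(\underbrace{a_1,\ldots,a_1}_{\lambda_1+\mu_1},\ldots,\underbrace{a_m,\ldots,a_m}_{\lambda_1+\mu_m},\underbrace{0,\ldots,0}_{\lambda_1},\underbrace{a_{m+1},\ldots,a_{m+1}}_{\lambda_{m+2}},\ldots).\]
I will then build a bitableau $(T,S)$ of shape $(\lambda,\mu)$ as follows: in $S$, place $\mu_i$ of the $a_i$-indices in row $i$; in $T$, place the remaining $\lambda_1$ $a_i$-indices in row $i$ for $i\leq m$, all $\lambda_1$ zero indices in row $m+1$, and the $\lambda_i$ indices of value $a_{i-1}$ in row $i$ for $i\geq m+2$. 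Every column of $T$ then features pairwise distinct squares $a_1^2,\ldots,a_m^2,0,a_{m+1}^2,\ldots$, every column of $S$ features pairwise distinct non-zero squares, and no entry of $S$ corresponds to a zero coordinate. Lemma~\ref{lemma:spechtofsums} immediately yields $\spe_{(T,S)}(z)\neq 0$, so $z\notin V_{\Omega(z)}$.

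For (2), write $(\vartheta,\omega)=\Omega(z)$ and fix a bitableau $(T,S)$ of shape $(\lambda,\mu)$ with $\spe_{(T,S)}(z)\neq 0$. Lemma~\ref{lemma:spechtofsums} gives both $\spe_{T\uplus S}(z^2)\neq 0$ and $z_j\neq 0$ for every $j\in S$. The first condition says $z^2\notin V_{\lambda\uplus\mu}$ (viewed as an $\mathcal{S}_n$-Specht variety), and since $\Lambda(z^2)=\vartheta\uplus\omega$ by Proposition~\ref{prop:from Bn to Sn orbit type}, Theorem~\ref{thm:Sn orbit decomposition} applied to $z^2$ delivers the glueing inequality $\lambda\uplus\mu\unrhd\vartheta\uplus\omega$, which is the first bidominance condition.

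The finer second bidominance condition will come from a pigeon-hole argument modeled on the proof of Lemma~\ref{lem:z2}(ii). Replace every index in $(T,S)$ by the corresponding coordinate of $z$ to obtain a generalized bitableau $(T^*,S^*)$ whose columns have pairwise distinct squares and in which $S^*$ contains no zero. Label the distinct squares occurring in $z$ as $s_0=0,s_1,\ldots,s_l$ with $s_1,\ldots,s_l$ listed in decreasing order of multiplicity, and sort each column of $(T^*,S^*)$ in increasing $s_i$-index from top to bottom; such within-column permutations only affect signs, so $\spe_{(T,S)}(z)$ remains non-zero. Then any entry of $T^*$ with value in $\{s_0,\ldots,s_k\}$ lies in rows $1,\ldots,k+1$ of $T$, while any entry of $S^*$ with value in $\{s_1,\ldots,s_k\}$ lies in rows $1,\ldots,k$ of $S$, so cell-counting gives
\[\sum_{j=1}^{k+1}\lambda_j+\sum_{j=1}^{k}\mu_j\;\geq\;\bigl(\text{multiplicity of }\{s_0,\ldots,s_k\}\text{ in }z\bigr).\]
I expect the main bookkeeping obstacle to be verifying that the right-hand side equals $\sum_{j=1}^{k+1}\vartheta_j+\sum_{j=1}^{k}\omega_j$. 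This identity relies on the equality $\vartheta_1=\cdots=\vartheta_{m+1}$ with $m=\len(\omega)$, precisely the condition ensuring $H_{(\vartheta,\omega)}\neq\emptyset$, and reduces to a short case split on whether $k\leq m$ or $k>m$. Reindexing $k\mapsto k-1$ then converts the resulting inequality into the second bidominance condition, completing the proof.
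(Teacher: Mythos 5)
Your proposal is correct and follows essentially the same route as the paper: reduce via $B_n$-invariance to the canonical orbit representative, get (1) by an explicit row-constant tableau filling, get the glueing inequality from Lemma~\ref{lemma:spechtofsums} together with the $\mathcal{S}_n$-orbit description, and get the refined inequality by the sorted-column pigeonhole argument, using $\vartheta_1=\cdots=\vartheta_{m+1}$ exactly where the paper does. The only difference is presentational: the paper routes these steps through Lemmas~\ref{lem:z1} and~\ref{lem:z2} (after permuting the point into the form required by Lemma~\ref{lem:z2}), while you inline the same arguments directly, and your flagged bookkeeping identity does check out by the $k\leq m$ versus $k>m$ case split you describe.
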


\begin{proof}
Let $(\vartheta,\omega) = \Omega(z)$ and $m = \len(\omega)$. Then there exist $\sigma \in B_n$ and $a_1,\ldots,a_l \in \K^* $ with distinct squares such that $z = \sigma z'$ with \[z' = (\underbrace{a_1,\ldots,a_1}_{\vartheta_1+\omega_1},\ldots,\underbrace{a_{m},\ldots,a_{m}}_{\vartheta_1+\omega_m},\underbrace{0,\ldots,0}_{\vartheta_1},\underbrace{a_{m+1},\ldots,a_{m+1}}_{\vartheta_{m+2}},\ldots,\underbrace{a_l,\ldots,a_l}_{\vartheta_{l+1}}).\] Since the Specht varieties are invariant under the action of $B_n$, we can assume that $z=z'$.
With this shape, we can apply Lemma~\ref{lem:z1} to $z$ which gives immediately (1), and partly (2): if $z\notin V_{(\lambda, \mu)}$, then $\lambda \uplus  \mu \trianglerighteq \vartheta \uplus \omega$.
It remains to prove that if $z \not\in V_{(\lambda, \mu)}$, then $ \sum_{j=1}^{k-1} (\lambda_j + \mu_j) + \lambda_{k} \geq   \sum_{j=1}^{k-1} (\vartheta_j + \omega_j) + \vartheta_{k}$ for any integer $k \geq 1$.
To do so, it is enough to observe that since $\vartheta_1=\cdots=\vartheta_{m+1}$, the point 
\[z'' =  (\underbrace{0,\ldots,0}_{\vartheta_1},\underbrace{a_1,\ldots,a_1}_{\omega_1+\vartheta_2},\underbrace{a_2\ldots,a_2}_{\omega_2+\vartheta_3},\ldots,\underbrace{a_m,\ldots,a_m}_{\omega_m+\vartheta_{m+1}},\underbrace{a_{m+1},\ldots,a_{m+1}}_{\vartheta_{m+2}},\ldots,\underbrace{a_l,\ldots,a_l}_{\vartheta_{l+1}})\]
is in the same orbit, and we can apply Lemma~\ref{lem:z2} to conclude the proof.

\end{proof}

As a consequence of our previous results, we get a decomposition of $B_n$-Specht varieties in terms of orbit sets:

\begin{theorem} \label{thm: specht varieties and orbit types}
$$V_{(\lambda,\mu)} = \left( \bigcup_{(\vartheta,\omega)\in \BP_n, (\vartheta,\omega) \unlhd (\lambda,\mu)} H_{(\vartheta,\omega)} \right)^c = \bigcup_{(\vartheta,\omega) \in \BP_n, (\vartheta,\omega) \not \unlhd (\lambda,\mu)} H_{(\vartheta,\omega)}.$$
\end{theorem}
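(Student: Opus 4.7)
The statement to be proved is really a clean corollary of Proposition~\ref{prop:not in variety and dominance} combined with Theorem~\ref{thm:equivalence} and the fact (Proposition~\ref{prop:from Bn to Sn orbit type}) that the $B_n$-orbit sets partition $\K^n$. My plan is to recast the claim in pointwise form: for every $z \in \K^n$, one should have $z \in V_{(\lambda,\mu)}$ if and only if $\Omega(z) \not\unlhd (\lambda,\mu)$. Once this equivalence is established, the second equality in the theorem follows immediately from the partition $\K^n = \biguplus_{(\vartheta,\omega) \in \BP_n} H_{(\vartheta,\omega)}$, and the first equality is just its set-theoretic complement.

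For the pointwise equivalence I would proceed by a direct two-way implication. First, suppose $z \notin V_{(\lambda,\mu)}$: applying Proposition~\ref{prop:not in variety and dominance}(2) gives $\Omega(z) \unlhd (\lambda,\mu)$ on the nose, so this direction requires no further work. Conversely, assume $\Omega(z) \unlhd (\lambda,\mu)$. By the (already proved) implication \eqref{equivalence_a} $\Rightarrow$ \eqref{equivalence_c} in Theorem~\ref{thm:equivalence}, the bidominance translates into the variety inclusion $V_{(\lambda,\mu)} \subseteq V_{\Omega(z)}$. But Proposition~\ref{prop:not in variety and dominance}(1) tells us $z \notin V_{\Omega(z)}$, so a fortiori $z \notin V_{(\lambda,\mu)}$.

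This shows exactly that $z \in V_{(\lambda,\mu)}$ iff $\Omega(z) \not\unlhd (\lambda,\mu)$, which is the same as saying $z$ belongs to one of the $H_{(\vartheta,\omega)}$ with $(\vartheta,\omega) \not\unlhd (\lambda,\mu)$. Taking the union over all such $z$ yields the second expression for $V_{(\lambda,\mu)}$ in the theorem, and since the $H_{(\vartheta,\omega)}$ partition $\K^n$, this union is exactly the complement of $\bigcup_{(\vartheta,\omega) \unlhd (\lambda,\mu)} H_{(\vartheta,\omega)}$, giving the first expression as well.

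Since all the heavy lifting (the combinatorial analysis of coverings, the ideal-theoretic inclusion, and the two special points used in Lemmas~\ref{lem:z1} and~\ref{lem:z2}) has been carried out in the previous sections, I do not anticipate any real obstacle here; the proof is essentially a two-line bookkeeping argument. The only subtlety worth making explicit is that the orbit type $\Omega(z)$ is well-defined for every $z$ (some bipartitions have empty orbit sets, but every $z$ lies in a non-empty one), so there is no issue invoking Proposition~\ref{prop:not in variety and dominance} at the point $\Omega(z)$.
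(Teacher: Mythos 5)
Your proposal is correct and follows essentially the same route as the paper: the second equality comes from the partition $\K^n = \biguplus_{(\vartheta,\omega)} H_{(\vartheta,\omega)}$, the inclusion ``$z \notin V_{(\lambda,\mu)} \Rightarrow \Omega(z) \unlhd (\lambda,\mu)$'' is Proposition~\ref{prop:not in variety and dominance}(2), and the converse combines Proposition~\ref{prop:not in variety and dominance}(1) with the implication \eqref{equivalence_a}$\Rightarrow$\eqref{equivalence_c} of Theorem~\ref{thm:equivalence}, exactly as in the paper's proof. No gaps; your remark that every point lies in a (necessarily non-empty) orbit set is a harmless extra clarification.
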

\begin{proof}
The collection $\left\{ H_{(\vartheta,\omega)} :(\vartheta,\omega) \in \BP_n  \right\}$ defines a set partition of $\K^n$ by definition, which explains the second equality. \\
In order to prove the first one, we first assume that $z \not \in V_{(\lambda,\mu)}$. By part (2) in Proposition \ref{prop:not in variety and dominance} we obtain that $(\lambda,\mu) \unrhd \Omega (z)$. Thus $z \in \bigcup_{(\vartheta,\omega)\in \BP_n, (\vartheta,\omega) \unlhd (\lambda,\mu)} H_{(\vartheta,\omega)} $.\\
Conversely, let $z \in \bigcup_{(\vartheta,\omega) \unlhd (\lambda,\mu)} H_{(\vartheta,\omega)}.$ 
In other words, $\Omega(z) \unlhd (\lambda,\mu)$.
Then, part (1) in Proposition~\ref{prop:not in variety and dominance} implies $z \not \in V_{\Omega(z)}$. On the other hand, by Theorem~\ref{thm:equivalence}, $V_{(\lambda,\mu)} \subset V_{\Omega(z)} $. Therefore $z \not \in V_{(\lambda,\mu)}$.
\end{proof}
\begin{example} \label{example:orbit types}
We calculate the Specht variety corresponding with the bipartition $((1,1),(2))$ using Theorem~\ref{thm: specht varieties and orbit types}. The bipartitions $(\vartheta, \omega)$ encoding non-empty orbit sets such that $(\vartheta,\omega) \not \unlhd ((1,1),(2))$ are the bipartitions in 
\[\Omega = \{((2,2),\emptyset),((2,1,1),\emptyset),(\emptyset,(4)),((3,1),\emptyset),((4),\emptyset)\} \subset \BP_4.\]
Then,
\begin{align*}
     V_{((1,1),(2))}  & = \bigcup_{(\lambda,\mu) \in \Omega} H_{(\lambda,\mu)}\\
      & = H_{((2,2),\emptyset)} \cup H_{((2,1,1),\emptyset)}  \cup H_{(\emptyset,(4))} \cup H_{((3,1),\emptyset)} \cup H_{((4),\emptyset)},
\end{align*}
which means
\begin{align*}
    V_{((1,1),(2))} & = B_4 \cdot \{(0,0,a,a),(0,0,a,b),(a,a,a,a),(0,0,0,a),(0,0,0,0) : a,b \in \K_{>0}\}.
\end{align*} 
\end{example}

One might look for a more natural orbit-type, only involving the number of zeroes of a point, and the $\mathcal{S}_n$-orbit-type of the remaining non-zero squared coordinates. 
Indeed, our previous decomposition can be reformulated in such a way, and it can be  obtained either using $S_n$-invariance and results of \cite{moustrou2021symmetric}, or as a consequence of our previous results on bipartitions. 
We just briefly describe here the latter approach because such a point of view, even if it gives a natural decomposition, does not give information on inclusions of $B_n$-Specht varieties, which will be needed for our applications in the next section. 

If $z = (a_1,\ldots,a_n) \in \K^n$ is a point, then \[\Stab_{\mathcal{S}_n}(a_1^2,\ldots,a_n^2) \simeq \mathcal{S}_{\Lambda_1}\times\cdots\times \mathcal{S}_{\Lambda_l}\times \mathcal{S}_t\] where $\Lambda_1\geqslant \ldots\geqslant \Lambda_l$ and $t$ is the number of zero coordinates of $z$. We could have defined the orbit type of $z$ as \[\Lambda(z) = (t,(\Lambda_1,\ldots,\Lambda_l)).\] 

Then, there is a bijection $\varphi$ between the set of pairs $(t,\Lambda)$ where $n\geqslant t\geqslant 0$ and $\Lambda \vdash n-t$ and the set of bipartitions $(\lambda,\mu)$ such that $H_{(\lambda,\mu)} \neq \emptyset$ given by \[(t,\Lambda) \mapsto \cut ( \overline{(\Lambda_1,\Lambda_2,\ldots,\Lambda_l,t)},t)\] where $\overline{(\Lambda_1,\Lambda_2,\ldots,\Lambda_l,t)}$ denotes the partition obtained by rearranging $(\Lambda_1,\Lambda_2,\ldots,\Lambda_l,t)$ in non increasing order.

Now, for $t \leqslant 0$ and $\Lambda \vdash n-t$, if we denote by $G_{t,\Lambda} = \{z \in \K^n;\ \Lambda(z) = (t,\Lambda)\}$, we have by construction 
\[
G_{t, \Lambda} = H_{\phi(t, \Lambda)}.
\]
Moreover, $\phi$ preserves the orders in the following sense: for $t$ fixed, $\phi(t, \Lambda) \trianglelefteq \phi(t, \Lambda')$ in our poset of bipartitions if and only if $\Lambda \trianglelefteq \Lambda'$ in the poset of partitions. Also, it is obvious that if $t> \lambda_1$, then $z \in V_{(\lambda,\mu)}$. 

As a consequence, our decomposition in Theorem~\ref{thm: specht varieties and orbit types} becomes in this context
\[
\left(V_{(\lambda,\mu)}\right)^c = \bigcup_{t=1}^{\lambda_1} \bigcup_{\substack{\Lambda, \\  \phi(t,\Lambda) \unlhd (\lambda, \mu)}} G_{t,\Lambda}.
\]

Actually, if one fixes  $0\leqslant t \leqslant \lambda_1$, one can prove that  \[\phi(t,\Lambda) \unlhd (\lambda, \mu) \Leftrightarrow \Lambda  \unlhd \lambda^{(t)} \uplus \mu,\] where $\lambda^{(t)}$ is defined as follows: if $s= \max\{i;\ \lambda_i \geqslant t\}$, then \[\lambda^{(t)} = (\lambda_1,\ldots,\lambda_{s-1},\lambda_s+\lambda_{s+1}-t,\lambda_{s+2},\ldots,\lambda_{l}).\]

Finally, we can reformulate the decomposition as: 
\[\left(V_{(\lambda,\mu)}\right)^c = \bigcup_{t=1}^{\lambda_1} \bigcup_{\Lambda \unlhd \lambda^{(t)} \uplus \mu} G_{(t,\Lambda)}.\]

\section{Applications to \texorpdfstring{$B_n$}{Bn}-invariant ideals}\label{sec:genideals}
\subsection{Specht ideals in $B_n$-invariant ideals}
The main result in the article~\cite{moustrou2021symmetric} studies the $\mathcal{S}_n$-Specht ideals contained in an ideal $I$ which is globally invariant under the action of $\mathcal{S}_n$. We give here the main ideas of this statement, before extending it to the case of $B_n$.

For $P$ a polynomial in an $\mathcal{S}_n$-invariant ideal $I \subset \K[\mathbf{x}_1, \ldots, \mathbf{x}_n]$ of degree $d$, we denote by $\wt(P_d)$ the number of variables appearing in the highest component $P_d$ of $P$. Moreover, for a monomial $m$ of degree $d$ in $P$, the partial degrees of $m$ induce a partition $(k_1, \ldots, k_\ell)$, and under the assumption $\wt(P_d) + d \leq n$, we have in particular $d + \ell \leq n$, and therefore we can define the partition
\[
\mu(m)=(k_1 +1, k_2 + 1, \ldots, k_\ell + 1, \underbrace{1, \ldots, 1}_{n-d-\ell}) 
\]
of $n$.
It is then proved (\cite[Theorem 4]{moustrou2021symmetric}) that for every monomial $m \in \Mon(P_d)$, the ideal $I$ contains every $\spe_T$ for which  $\sh(T) \trianglelefteq \mu(m)^\perp$.

The proof works as follows: up to permutation of the variables, we may assume that 
\[m=\mathbf{x}_1^{k_1}\mathbf{x}_2^{k_2}\cdots \mathbf{x}_\ell^{k_\ell}\]
and since $\wt(P_d)+ d \leq n$, there exists $d = k_1 + \ldots + k_\ell $ many variables in $\{\mathbf{x}_1, \ldots \mathbf{x}_n\}$ that do not appear in $P_d$. 
More precisely, we can take $I_1, \ldots, I_\ell$, disjoint subsets of $\{1,\ldots,n\}$ such that for any $1\leq i \leq \ell$, there are $k_i$ elements in $I_i$, and none of them appears in $P_d$. 
Then, if for $1\leq i \leq \ell$, $J_i = \{i\} \cup I_i$, we can prove
\begin{equation}\label{eq:DeltaSn}
\Delta_{J_1}\cdots\Delta_{J_\ell} = \frac{k}{ k_1!\cdots k_\ell!} \sum_{\sigma \in \Sym_{J_1}\times \cdots \times\Sym_{J_\ell}} \epsilon(\sigma) \sigma (\Delta_{I_1}\cdots\Delta_{I_\ell}P)
\end{equation}
where $\Delta_{I}$ is the Vandermonde polynomial of the ordered set $I$ and $k \neq 0$. 

Now, we want to generalize this result to $B_n$-invariant ideals. First, we need to associate a bipartition to a given monomial:
\begin{definition}\label{def:Gamma(m)}
Let $m$ be a monomial in $\K[\mathbf{x}_1, \ldots, \mathbf{x}_n]$. There exist unique sets $I_1$ and $I_2$ such that we can write $m$ as 
\[
m = \prod_{i\in I_1} \mathbf{x}_i^{2k_i}\prod_{i\in I_2} \mathbf{x}_i^{2r_i} \prod_{i\in I_2} \mathbf{x}_i
\]
and $k_i \neq 0$.
Denote $\ell=\mid I_1 \mid$, $d_1 = \sum_{i\in I_1} k_i$, $s = \mid I_2 \mid$, and $d_2 = \sum_{i\in I_2} r_i$. 
The sets $\{k_i, i \in I_1\}$ and $\{r_i, i \in I_2\}$ respectively induce partitions $(\lambda_1, \ldots, \lambda_\ell)$ of $d_1$ and $(\mu_1, \ldots, \mu_s)$ of $d_2$.
If moreover we assume that $ \ell + s + d_1 + d_2 \leq n$, we can define a bipartition $\Gamma(m)$ of $n$ by
\[
\Gamma(m)=(\tilde{\lambda}, \tilde{\mu})=((\lambda_1 +1, \lambda_2 + 1, \ldots, \lambda_\ell + 1, \underbrace{1, \ldots, 1}_{n-(\ell + s + d_1 + d_2)}),(\mu_1 +1, \mu_2 + 1, \ldots, \mu_s + 1)).
\]
Finally, we define 
\[
\Gamma^*(m) = ( \tilde{\lambda}^\perp,\tilde{\mu}^\perp) = (n-(s+d_1+d_2),\ell,\tilde{\lambda}^\perp_3,\ldots),(s,\tilde{\mu}_2^\perp,\ldots)),
\]
which is a bipartition of $n$ as well.
\end{definition}

With this notion, we get, for $B_n$-invariant ideals:

\begin{theorem}\label{thm:Ideals}
Let $I \subset \K[\mathbf{x}_1,\ldots,\mathbf{x}_n]$ be a $B_n$-invariant ideal, and let $P \in I$.
Assume that $m$ is a monomial in the homogeneous component of highest degree of $P$. Using the notation of Definition~\ref{def:Gamma(m)}, assume that  $\wt(P_d) + d_1 + d_2 \leqslant n$. Then, we have the ideal inclusion
\[
I_{\Gamma^*(m)} \subset I.
\]
\end{theorem}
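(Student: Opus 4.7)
The plan is to mimic, in two stages, the proof of~\cite[Theorem~4]{moustrou2021symmetric}: first a sign-projection using the $\mathcal{S}_2^n$-factor of $B_n$, then a permutation alternation using $\mathcal{S}_n\subset B_n$. After applying a permutation in $\mathcal{S}_n\subset B_n$ I may assume $I_1=\{1,\ldots,\ell\}$ and $I_2=\{\ell+1,\ldots,\ell+s\}$, with $k_1\geq\cdots\geq k_\ell$ and $r_1\geq\cdots\geq r_s$, so that $m=\prod_{i=1}^\ell\mathbf{x}_i^{2k_i}\prod_{i=1}^s\mathbf{x}_{\ell+i}^{2r_i+1}$. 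The hypothesis $\wt(P_d)+d_1+d_2\leq n$ allows me to pick pairwise disjoint subsets $I_i\subset\{1,\ldots,n\}$ ($|I_i|=k_i$) and $L_i\subset\{1,\ldots,n\}$ ($|L_i|=r_i$), all disjoint from the variables appearing in $P_d$; I set $J_i=\{i\}\cup I_i$ and $K_i=\{\ell+i\}\cup L_i$. Placing $J_i$ in the $i$-th column of the first diagram of $\Gamma^*(m)$ and $K_i$ in the $i$-th column of the second, with the remaining $n-\ell-s-d_1-d_2$ variables filling singleton columns of the first diagram (whose Vandermonde factors equal $1$), one checks that
\[Q=\prod_{i=1}^\ell\Delta_{J_i}(\mathbf{x}^2)\prod_{i=1}^s\Delta_{K_i}(\mathbf{x}^2)\prod_{j\in\bigcup_i K_i}\mathbf{x}_j\]
is a Specht polynomial of shape $\Gamma^*(m)$. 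Since $I$ is $B_n$-invariant and the $B_n$-orbit of $Q$ generates $I_{\Gamma^*(m)}$, it suffices to prove $Q\in I$.

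To do so, introduce
\[\widetilde{P}=\prod_{i=1}^\ell\Delta_{I_i}(\mathbf{x}^2)\prod_{i=1}^s\Delta_{L_i}(\mathbf{x}^2)\prod_{j\in\bigcup_i L_i}\mathbf{x}_j\cdot P\in I,\]
set $S=I_2\cup\bigcup_i L_i$ and $G=\prod_{i=1}^\ell\mathcal{S}_{J_i}\times\prod_{i=1}^s\mathcal{S}_{K_i}\subset\mathcal{S}_n\subset B_n$, and successively apply
\[\widetilde{P}_{\mathrm{proj}}=\frac{1}{2^n}\sum_{\tau\in\{\pm 1\}^n}\Bigl(\prod_{j\in S}\tau_j\Bigr)\tau\cdot\widetilde{P}\in I,\qquad R=\sum_{\sigma\in G}\epsilon(\sigma)\,\sigma\cdot\widetilde{P}_{\mathrm{proj}}\in I.\]
A standard character computation shows that $\widetilde{P}_{\mathrm{proj}}$ extracts precisely the monomials of $\widetilde{P}$ whose exponent in $\mathbf{x}_j$ is odd exactly when $j\in S$; at the level of a monomial $m'$ of $P$, this means that $e_j(m')$ must be odd if and only if $j\in I_2$. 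In particular, the designated monomial $m$ passes the projection.

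The key computation is that $R=c\cdot Q$ for some $c\neq 0$. Since the blocks $J_1,\ldots,K_s$ are pairwise disjoint, the alternation over $G$ factors across them, and the contribution of a surviving monomial $m'$ of $P$ is
\[m'_E\cdot\prod_i\mathrm{alt}_{\mathcal{S}_{J_i}}\bigl[\Delta_{I_i}(\mathbf{x}^2)\,m'_{J_i}\bigr]\cdot\prod_i\mathrm{alt}_{\mathcal{S}_{K_i}}\Bigl[\Delta_{L_i}(\mathbf{x}^2)\prod_{j\in L_i}\mathbf{x}_j\cdot m'_{K_i}\Bigr],\]
where $m'_E,m'_{J_i},m'_{K_i}$ are the restrictions of $m'$ to $E=\{1,\ldots,n\}\setminus(\bigcup J_i\cup\bigcup K_i)$, $J_i$ and $K_i$. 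After the projection, each factor in the first product has only even exponents on $J_i$; after pulling out the $\mathcal{S}_{K_i}$-invariant factor $\prod_{j\in K_i}\mathbf{x}_j$ (provided by the odd exponents on $K_i$), the same holds for the second product on $K_i$. The substitution $\mathbf{y}_j=\mathbf{x}_j^2$ therefore reduces each local alternation to the $\mathcal{S}_n$-type alternation analyzed in~\cite{moustrou2021symmetric}. The corresponding degree inequalities $\deg m'_{J_i}\geq 2k_i$ and $\deg m'_{K_i}\geq 2r_i+1$, which are necessary for nonzero contributions, sum to $\deg m'\geq d$. Combined with $\deg m'\leq d$ and the fact that variables in $\bigcup_i I_i\cup\bigcup_i L_i$ do not appear in $P_d$, this forces $m'_E=1$, $m'_{J_i}=\mathbf{x}_i^{2k_i}$, $m'_{K_i}=\mathbf{x}_{\ell+i}^{2r_i+1}$, i.e.\ $m'=m$. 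Each local alternation for $m'=m$ then evaluates to a nonzero scalar multiple of $\Delta_{J_i}(\mathbf{x}^2)$ (resp.\ of $\Delta_{K_i}(\mathbf{x}^2)\prod_{j\in K_i}\mathbf{x}_j$), so that $R=c\cdot Q$ with $c\neq 0$ (the constant also absorbs the coefficient of $m$ in $P$, which is nonzero by assumption). Hence $Q\in I$ and the theorem follows. I expect the main delicate point to be the careful parity/degree bookkeeping that isolates $m$ as the unique contributing monomial; the substitution $\mathbf{y}=\mathbf{x}^2$ then reduces the remaining local computations to a direct application of the $\mathcal{S}_n$-case.
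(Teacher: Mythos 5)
Your proposal is correct and follows essentially the same route as the paper: project out the unwanted sign-parities using the $\{\pm 1\}^n$ factor of $B_n$, multiply by auxiliary Vandermonde factors in the squared variables supported on fresh indices not appearing in $P_d$, and alternate over the product of symmetric groups on the blocks $J_i$, $K_i$ to produce a Specht generator of $I_{\Gamma^*(m)}$, with the substitution $\mathbf{y}=\mathbf{x}^2$ reducing the block computations to the $\mathcal{S}_n$ case. The only cosmetic differences are the order of operations (the paper first reduces $P$ to the form $Q(\mathbf{x}^2)\prod_{i}\mathbf{x}_i$ and then cites the $\mathcal{S}_n$ identity, whereas you multiply first and re-derive the nonvanishing and the uniqueness of the contributing monomial by the divisibility/degree bookkeeping), which does not change the substance of the argument.
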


\begin{proof}
Up to permutation, we may assume that 
\[
m = \prod_{i=1}^\ell \mathbf{x}_i^{2k_i}\prod_{i=\ell+1}^{\ell + s} \mathbf{x}_i^{2r_i} \prod_{i=\ell+1}^{\ell + s} \mathbf{x}_i
\]
and that the coefficient of $m$ in $P$ is $1$.
Let $\epsilon_i \in B_n$ the map changing $\mathbf{x}_i$ in $-\mathbf{x}_i$. Then, the polynomial
\[
\frac{P - \epsilon_i P}{2}
\]
is a polynomial in $I$ whose terms are exactly the terms of $P$ having an odd degree in $\mathbf{x}_i$, and therefore divisible by $\mathbf{x}_i$. 
After applying this transformation for every $i\in \{\ell + 1, \ldots, \ell +s\}$, we may substitute $P$ with a polynomial of the form
\[
\tilde{P}(\mathbf{x})\prod_{i=\ell+1}^{\ell + s} \mathbf{x}_i ,
\] 
containing $m$ in its leading term, and where every term in $\tilde{P}$ has even degree in $\mathbf{x}_i$, for every $i\in \{\ell + 1, \ldots, \ell +s\}$.
Further, for $i\notin \{\ell + 1, \ldots, \ell +s\}$, we can apply the transformation
\[
\frac{\tilde{P}(\mathbf{x}) + \epsilon_i {\tilde{P}(\mathbf{x})}}{2}
\]
to get a polynomial which is still in $I$, but its terms are exactly the terms of $P$ having an even degree in $\mathbf{x}_i$. 
In the end, we may assume that $P$ is of the form:
\[
P ( \mathbf{x}) = Q(\mathbf{x}^2) \prod_{i=\ell+1}^{\ell + s} \mathbf{x}_i ,
\]
where $m$ is still a monomial of the leading term. 

Now we can apply a strategy similar to the one described for $\mathcal{S}_n$-invariant ideals. 
Since $\ell + s + d_1 + d_2 \leq \wt(P_d)+ d_1 + d_2 \leq n$, there exists $d_1 +d_2 = k_1 + \cdots + k_\ell + r_1 + \cdots + r_s $ many variables in $\{\mathbf{x}_1, \ldots \mathbf{x}_n\}$ that do not appear in $P_d$, and we can take $I_1, \ldots, I_\ell$, $I_{\ell+1}, \ldots, I_{\ell+s}$, disjoint subsets of $\{1,\ldots,n\}$ such that for any $1\leq i \leq \ell$, there are $k_i$ elements in $I_i$, for any $\ell + 1\leq i \leq \ell +s$, there are $r_i$ elements in $I_i$, and none of them appears in $P_d$. 
Then, for $1\leq i \leq \ell + s$, denote 
\[
J_i = \{i\} \cup I_i,
\]
and $\tilde{\Delta}_J(\mathbf{x})= \Delta_J(\mathbf{x}^2)$ the Vandermonde polynomial associated with the variables $\mathbf{x}_i^2$ for $i \in J$.
Consider 
\[
R(\mathbf{x})= P(\mathbf{x})\tilde{\Delta}_{I_1}\cdots\tilde{\Delta}_{I_{\ell+s}}\prod_{i \in {I_{\ell+1}}\cup \cdots \cup {I_{\ell+s}}}\mathbf{x}_i
\]
We then have
\[
\begin{aligned}
\sum_{\sigma \in \Sym_{J_1}\times \cdots \times\Sym_{J_{\ell+s}}} \epsilon(\sigma) \sigma (R(\mathbf{x}))&=\sum_{\sigma \in \Sym_{J_1}\times \cdots \times\Sym_{J_{\ell+s}}} \epsilon(\sigma) \sigma (P(\mathbf{x})\tilde{\Delta}_{I_1}\cdots\tilde{\Delta}_{I_{\ell+s}}\prod_{i \in {I_{\ell+1}}\cup \cdots \cup {I_{\ell+s}}}\mathbf{x}_i) 
\\
&= \sum_{\sigma \in \Sym_{J_1}\times \cdots \times\Sym_{J_{\ell+s}}} \epsilon(\sigma) \sigma \left(Q(\mathbf{x}^2)\tilde{\Delta}_{I_1}\cdots\tilde{\Delta}_{I_{\ell+s}}\prod_{i \in {J_{\ell+1}}\cup \cdots \cup {J_{\ell+s}}}\mathbf{x}_i\right)
\\ & = \prod_{i \in {J_{\ell+1}}\cup \cdots \cup {J_{\ell+s}}}\mathbf{x}_i\sum_{\sigma \in \Sym_{J_1}\times \cdots \times\Sym_{J_\ell}} \epsilon(\sigma) \sigma \left(Q(\mathbf{x}^2)\tilde{\Delta}_{I_1}\cdots\tilde{\Delta}_{I_{\ell+s}}\right)
\\ & = k_1! \cdots k_\ell! \cdot r_{\ell+1}! \cdots r_{\ell+s}!  \tilde{\Delta}_{J_1}\cdots\tilde{\Delta}_{J_\ell} \tilde{\Delta}_{J_{\ell+1}}\cdots\tilde{\Delta}_{J_{\ell+s}}\prod_{i \in {J_{\ell+1}}\cup \cdots \cup {J_{\ell+s}}}\mathbf{x}_i
\end{aligned}
\]
where the last equality follows from \eqref{eq:DeltaSn}.
Therefore, $I$ contains the polynomial
\[
\tilde{\Delta}_{J_1}\cdots\tilde{\Delta}_{J_\ell} \tilde{\Delta}_{J_{\ell+1}}\cdots\tilde{\Delta}_{J_{\ell+s}}\prod_{i \in {J_{\ell+1}}\cup \cdots \cup {J_{\ell+s}}}\mathbf{x}_i 
\]
which is one of the generators of $I_{\Gamma^*(m)}$. 
By symmetry, we get the theorem.
\end{proof}

\begin{corollary}\label{cor:Ideals}
Let $I \subset \K[\mathbf{x}_1,\ldots,\mathbf{x}_n]$ be a $B_n$-invariant ideal, and let $P \in I$.
Assume that $m$ is a monomial in the leading term of $P$. Using the notation of Definition~\ref{def:Gamma(m)}, assume that  $\wt(P_d) + d_1 + d_2 \leqslant n$. Then, we have the inclusion of varieties 
\[
V(I) \subset V(I_{\Gamma^*(m)})
\]
and $V(I) \cap H_{(\vartheta,\omega)} = \emptyset$ for any bipartition $(\vartheta,\omega) \in \BP_n$ bidominated by $\Gamma^*(m)$.
\end{corollary}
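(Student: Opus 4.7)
The plan is to deduce this corollary as a direct consequence of Theorem \ref{thm:Ideals} combined with Theorems \ref{thm:equivalence} and \ref{thm: specht varieties and orbit types}, so essentially no new computation is required; the corollary is a geometric restatement of the ideal inclusion.

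First, I would address part (1). By Theorem \ref{thm:Ideals}, the hypotheses guarantee $I_{\Gamma^*(m)} \subset I$. Taking the associated varieties reverses the inclusion, so I obtain
\[
V(I) \;\subset\; V(I_{\Gamma^*(m)}) \;=\; V_{\Gamma^*(m)},
\]
which is exactly the first statement.

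Next, for part (2), I would invoke the orbit-type decomposition of Specht varieties given in Theorem \ref{thm: specht varieties and orbit types}:
\[
V_{\Gamma^*(m)} \;=\; \bigcup_{(\vartheta,\omega)\not\unlhd \Gamma^*(m)} H_{(\vartheta,\omega)}.
\]
Since the $B_n$-orbit sets $H_{(\vartheta,\omega)}$ form a set partition of $\K^n$ (Proposition \ref{prop:from Bn to Sn orbit type}), any bipartition $(\vartheta,\omega)$ with $(\vartheta,\omega) \unlhd \Gamma^*(m)$ satisfies $H_{(\vartheta,\omega)} \cap V_{\Gamma^*(m)} = \emptyset$. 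Combining with the inclusion $V(I) \subset V_{\Gamma^*(m)}$ from part (1) immediately yields $V(I) \cap H_{(\vartheta,\omega)} = \emptyset$, as desired.

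There is no real obstacle here: the work has already been done in Theorem \ref{thm:Ideals} (the Specht ideal inclusion), in Theorem \ref{thm:equivalence} (equivalence of the bidominance order with ideal/variety inclusion), and in Theorem \ref{thm: specht varieties and orbit types} (the orbit-set decomposition). The corollary simply packages these results into the language of varieties and orbit types, which is the form most convenient for computational applications to $B_n$-closed systems.
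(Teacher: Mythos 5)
Your argument is correct and coincides with the paper's own proof: the variety inclusion is deduced by reversing the ideal inclusion from Theorem~\ref{thm:Ideals}, and the emptiness of $V(I)\cap H_{(\vartheta,\omega)}$ follows from the orbit-set decomposition of $V_{\Gamma^*(m)}$ in Theorem~\ref{thm: specht varieties and orbit types} together with the fact that the orbit sets partition $\K^n$. No gaps; the appeal to Theorem~\ref{thm:equivalence} is not even needed.
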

\begin{proof}
The inclusion of varieties follows immediately from the set inclusion in Theorem \ref{thm:Ideals}, while the second claim follows from the set partition of Specht varieties in Theorem \ref{thm: specht varieties and orbit types}.
\end{proof}
We illustrate how this can be applied:
\begin{example}
Let $P = \mathbf{x}_2\mathbf{x}_3(\mathbf{x}_1^2-1) \in \K[\mathbf{x}_1,\ldots,\mathbf{x}_4]$. The polynomial $P$ contains a unique monomial $m = \mathbf{x}_1^2\mathbf{x}_2\mathbf{x}_3$ of highest degree. Using the notation of Definition~\ref{def:Gamma(m)} we obtain $\wt(P_4) = 3, l=1, s=2, d_1 =1,d_2 =0$, $\Gamma (m) = ((2),(1,1))$, and $\Gamma^*(m) = ((1,1),(2))$. Let $I$ denote the ideal that is generated by the $B_n$ orbit of $P$. Then, by Corollary \ref{cor:Ideals} it must be $V(I) \subset V_{((1,1),(2))}$. Thus, we have $$V(I) \subset B_4 \cdot \{(0,0,a,a),(0,0,a,b),(a,a,a,a),(0,0,0,a),(0,0,0,0) : a,b \in \K_{>0}\}.$$
We observe that $V(I) = \{(1,1,1,1),(0,0,0,a),(0,0,0,0) : a \in \K\}$. Thus $V(I)$ contains already points of three of the five possible orbit types.
\end{example}
\subsection{Connections with Representation  Theory} We assume that $\K = \C$, or $\K = \R$ if $G$ is a real reflection group.
Let $G$ be a finite group acting linearly on the polynomial ring $\K[\mathbf{x}_1,\ldots,\mathbf{x}_n]$. The polynomials fixed by this action  form a finitely generated subalgebra $\K[\mathbf{x}_1,\ldots,\mathbf{x}_n]^G$.
Moreover, each finite group admits - up to isomorphism - a finite number of irreducible $\K[G]$-modules and the action on $\K[\mathbf{x}_1,\ldots,\mathbf{x}_n]$ can be decomposed into isotypic components, i.e., we have a decomposition of the form
\begin{equation}\label{eq:iso}\K[\mathbf{x}_1,\ldots,\mathbf{x}_n]=\bigoplus_{\chi} \K[\mathbf{x}_1,\ldots,\mathbf{x}_n]_{\chi},
\end{equation}
where $\chi$ runs over the pairwise non-isomorphic representations and each isotypic component $\K[\mathbf{x}_1,\ldots,\mathbf{x}_n]_{\chi}$ contains only pairwise isomorphic $\K[G]$-submodules. Notice, that with this notion the invariant polynomials $\K[\mathbf{x}_1,\ldots,\mathbf{x}_n]^G$ correspond to the trivial representation. Clearly, $\K[\mathbf{x}_1,\ldots,\mathbf{x}_n]$ has the structure of a $\K[\mathbf{x}_1,\ldots,\mathbf{x}_n]^G$-module. Finally, let $J_{+}\subset \K[\mathbf{x}_1,\ldots,\mathbf{x}_n]$ be the ideal generated by invariant polynomials of positive degree. Then the algebra $\K[\mathbf{x}_1,\ldots,\mathbf{x}_n]_G:=\K[\mathbf{x}_1,\ldots,\mathbf{x}_n] / J_+$ is called the \emph{coinvariant algebra}. Whereas these algebras can be defined and studied for all finite groups, it was shown by Chevalley (\cite[Theorem (B)]{chevalley1955invariants}) and Sephard-Todd (\cite{shep}) that for finite reflection groups, these algebras are very regular and moreover that they charaterize  finite reflection groups by what is now known as Chevalley–Shephard–Todd theorem:

\begin{theorem}[\cite{chevalley1955invariants,shep}] \label{thm:chev1}
Let $G$ be a finite group. Then, the following are equivalent
\begin{enumerate}
    \item $G$ is a group generated by reflections.
    \item The algebra of polynomial invariants $\K[\mathbf{x}_1,\ldots,\mathbf{x}_n]^G$ is a (free) polynomial algebra.
    \item The algebra $\K[\mathbf{x}_1,\ldots,\mathbf{x}_n]$ is a free module over $\K[\mathbf{x}_1,\ldots,\mathbf{x}_n]^G$.
    \item $\K[\mathbf{x}_1,\ldots,\mathbf{x}_n]_G$ affords the regular representation of $G$, i.e.,
\[\K[\mathbf{x}_1,\ldots,\mathbf{x}_n]_G\simeq \bigoplus_{\chi} \dim(\chi) \chi,  \]
where $\chi$ runs over the pairwise non-isomorphic representations of $G$.
\end{enumerate}
\end{theorem}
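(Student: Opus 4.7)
The plan is to prove the cycle $(1) \Rightarrow (2) \Rightarrow (3) \Rightarrow (4) \Rightarrow (1)$. For $(1) \Rightarrow (2)$ (Chevalley's direction), let $J_+$ denote the ideal of $\K[\mathbf{x}_1,\ldots,\mathbf{x}_n]$ generated by positive-degree invariants, and pick a minimal homogeneous invariant generating set $f_1,\ldots,f_r$ of $J_+$. The crucial technical lemma, where the reflection hypothesis enters decisively, asserts that any syzygy $\sum g_i f_i = 0$ of minimal total degree has every $g_i \in J_+$. One proves this by induction on degree, exploiting that if $s \in G$ is a reflection fixing the hyperplane $\{\ell_s = 0\}$, then $p - s \cdot p$ is divisible by $\ell_s$ for every polynomial $p$. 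Combined with the Reynolds operator, this lemma forces algebraic independence of the $f_i$ (hence $r = n$) and shows that they generate all of $\K[\mathbf{x}_1,\ldots,\mathbf{x}_n]^G$ as a $\K$-algebra.

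For $(2) \Rightarrow (3)$: each $p \in \K[\mathbf{x}_1,\ldots,\mathbf{x}_n]$ satisfies the monic polynomial equation $\prod_{g\in G}(T - g \cdot p) = 0$ over $\K[\mathbf{x}_1,\ldots,\mathbf{x}_n]^G$, so $\K[\mathbf{x}_1,\ldots,\mathbf{x}_n]$ is a finitely generated module over $\K[\mathbf{x}_1,\ldots,\mathbf{x}_n]^G$. Since $\K[\mathbf{x}_1,\ldots,\mathbf{x}_n]$ is Cohen--Macaulay and $\K[\mathbf{x}_1,\ldots,\mathbf{x}_n]^G$ is regular by hypothesis, the Auslander--Buchsbaum formula forces projective dimension zero, hence freeness.

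For $(3) \Rightarrow (4)$: writing $\K[\mathbf{x}_1,\ldots,\mathbf{x}_n]^G = \K[f_1,\ldots,f_n]$ with $\deg f_i = d_i$, freeness factorizes Hilbert series to give $\mathrm{Hilb}(\K[\mathbf{x}_1,\ldots,\mathbf{x}_n]_G)(t) = \prod_i (1-t^{d_i})/(1-t)$. Specializing at $t = 1$ yields $\prod_i d_i$, which by Molien's formula $\mathrm{Hilb}(\K[\mathbf{x}_1,\ldots,\mathbf{x}_n]^G)(t) = \frac{1}{|G|}\sum_{g \in G}\det(1 - gt)^{-1}$ evaluates to $|G|$. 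Repeating the same freeness-plus-Molien argument inside each $\chi$-isotypic component (via the twisted Molien identity attached to the character of the irreducible $\chi$) shows that each $\chi$ appears in $\K[\mathbf{x}_1,\ldots,\mathbf{x}_n]_G$ with multiplicity $\dim(\chi)$, which is precisely the regular-representation statement.

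For $(4) \Rightarrow (1)$ (the Shephard--Todd direction): let $H \leq G$ be the subgroup generated by all reflections of $G$; it is automatically normal since conjugation preserves reflections. By the already-established $(1) \Rightarrow (4)$ applied to $H$, we have $\dim_\K \K[\mathbf{x}_1,\ldots,\mathbf{x}_n]_H = |H|$. The inclusion $\K[\mathbf{x}_1,\ldots,\mathbf{x}_n]^G \subseteq \K[\mathbf{x}_1,\ldots,\mathbf{x}_n]^H$ induces a surjection $\K[\mathbf{x}_1,\ldots,\mathbf{x}_n]_G \twoheadrightarrow \K[\mathbf{x}_1,\ldots,\mathbf{x}_n]_H$, giving $|G| \geq |H|$. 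To force equality, I would use that $(1) \Rightarrow (3)$ applied to $H$ makes $\K[\mathbf{x}_1,\ldots,\mathbf{x}_n]$ free of rank $|H|$ over $\K[\mathbf{x}_1,\ldots,\mathbf{x}_n]^H$, which combined with the integral extension $\K[\mathbf{x}_1,\ldots,\mathbf{x}_n]^G \hookrightarrow \K[\mathbf{x}_1,\ldots,\mathbf{x}_n]^H$ and the known rank $|G|$ over $\K[\mathbf{x}_1,\ldots,\mathbf{x}_n]^G$ forces $[G:H] = 1$, hence $H = G$. I expect the main obstacle to be the Chevalley syzygy lemma behind $(1) \Rightarrow (2)$, which is where the reflection hypothesis is substantively used; the other implications reduce to standard Hilbert-series, Molien-type, and commutative-algebra bookkeeping.
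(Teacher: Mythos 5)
The paper does not prove this statement at all: it is the classical Chevalley--Shephard--Todd theorem, imported from \cite{chevalley1955invariants,shep}, so your proposal has to stand on its own. Your chain $(1)\Rightarrow(2)\Rightarrow(3)\Rightarrow(4)$ follows the standard route and is acceptable as a sketch, up to two repairs: in $(1)\Rightarrow(2)$ the key lemma is normally ``if $\sum_i g_if_i=0$ and $f_1$ does not lie in the ideal of $\K[\mathbf{x}_1,\ldots,\mathbf{x}_n]^G$ generated by $f_2,\ldots,f_r$, then $g_1\in J_+$'', and passing from this lemma to algebraic independence needs the additional partial-derivative (Jacobian) step, which you do not mention; and in $(3)\Rightarrow(4)$ you write $\K[\mathbf{x}_1,\ldots,\mathbf{x}_n]^G=\K[f_1,\ldots,f_n]$, i.e.\ you invoke $(2)$, which inside a cyclic proof must first be recovered from $(3)$ (easy: freeness gives faithful flatness, regularity descends, and a positively graded regular $\K$-algebra with degree-zero part $\K$ is polynomial), but this should be said.

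The genuine gap is in $(4)\Rightarrow(1)$. The surjection $\K[\mathbf{x}]_G\twoheadrightarrow\K[\mathbf{x}]_H$ only gives $|G|\geq|H|$, which is trivial from $H\leq G$, and your rank comparison is vacuous: for \emph{any} subgroup $H\leq G$ one has $\operatorname{rank}_{\K[\mathbf{x}]^G}\K[\mathbf{x}]=|G|$ and $\operatorname{rank}_{\K[\mathbf{x}]^H}\K[\mathbf{x}]=|H|$ (these are degrees of fraction-field extensions, valid for every faithful action, with or without hypothesis $(4)$), and they are reconciled by the identity $|G|=[G{:}H]\,|H|$, so nothing forces $[G{:}H]=1$. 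As written, the argument never uses $(4)$ in an essential way and would ``prove'' that every finite linear group is a reflection group. The standard repair is: first show $(4)\Rightarrow(3)\Rightarrow(2)$ for $G$ (by graded Nakayama, $\dim_\K\K[\mathbf{x}]_G$ bounds the number of module generators and always dominates the generic rank $|G|$; equality makes the kernel of $(\K[\mathbf{x}]^G)^{\oplus|G|}\to\K[\mathbf{x}]$ a torsion submodule of a free module, hence zero, so $\K[\mathbf{x}]$ is free and $\K[\mathbf{x}]^G$ is polynomial with degrees $d_1,\ldots,d_n$, $\prod_i d_i=|G|$); then compare with the degrees $e_1,\ldots,e_n$ of the reflection subgroup $H$: the Laurent expansion of the Molien series at $t=1$ gives $\sum_i(d_i-1)=\#\{\text{reflections in }G\}=\#\{\text{reflections in }H\}=\sum_i(e_i-1)$, while expressing the basic $G$-invariants as polynomials in the basic $H$-invariants and using algebraic independence gives $d_i\geq e_i$ after reordering; hence $d_i=e_i$, so $|G|=\prod_i d_i=\prod_i e_i=|H|$ and $G=H$. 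Some counting input of this kind (degrees versus number of reflections) is indispensable, and it is exactly what your sketch is missing.
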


Let $(\lambda,\mu) \in \BP_n$ be a bipartition, denote by $\K[\mathbf{x}_1,\ldots,\mathbf{x}_n]_{(\lambda,\mu)}$ the isotypic component corresponding to $(\lambda,\mu)$, and by  $I_{(\lambda,\mu)}$ the associated Specht ideal. Notice that by Theorem \ref{thm:chev1}, $\K[\mathbf{x}_1,\ldots,\mathbf{x}_n]_{(\lambda,\mu)}$ is also a finitely generated $\K[\mathbf{x}_1,\ldots,\mathbf{x}_n]^{G}$ module, generated by $s_{\lambda,\mu}^2$ many elements, where   $s_{\lambda,\mu}$ denotes the dimension of the corresponding irreducible representation. It follows from (\cite[Theorem 1 (2)]{morita1998higher}) that $s_{\lambda,\mu}$ is in fact equal to the number of standard bitableaux of shape ${\lambda,\mu}$. 
We note the following proposition:

\begin{proposition} \label{prop:B irred reps}
 Let $d$ be minimal with \[ \mathcal{V} := \K[\mathbf{x}_1,\ldots,\mathbf{x}_n]_{(\lambda,\mu)} \cap \K[\mathbf{x}_1,\ldots,\mathbf{x}_n]_{\leq d} \neq \emptyset. \] Then, the multiplicity of an irreducible representation of type $(\lambda,\mu)$ in $\mathcal{V}$ is $1$. This unique irreducible representation is given by the $B_n$-Specht polynomials of shape $(\lambda,\mu)$. Moreover, $\K[\mathbf{x}_1,\ldots,\mathbf{x}_n]_{(\lambda,\mu)}$ is contained in the ideal generated by this unique irreducible representation, i.e., \[\K[\mathbf{x}_1,\ldots,\mathbf{x}_n]_{(\lambda,\mu)}\subset I_{(\lambda,\mu)}. \]
\end{proposition}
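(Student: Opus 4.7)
The plan is to use the theory of higher Specht polynomials for the hyperoctahedral group developed in~\cite{morita1998higher}, which provides an explicit homogeneous basis of each isotypic component of the coinvariant algebra in which every basis element factors through a classical $B_n$-Specht polynomial.

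I would start by observing that every $B_n$-Specht polynomial of shape $(\lambda,\mu)$ has the same degree
\[
d_0 \;=\; 2\sum_{j} \binom{\lambda^\perp_j}{2} + 2\sum_{j} \binom{\mu^\perp_j}{2} + \mid\mu\mid,
\]
and that, as recalled in Section~\ref{sec:Bn}, these polynomials span an irreducible $B_n$-module of type $(\lambda,\mu)$ sitting inside $\K[\mathbf{x}_1,\ldots,\mathbf{x}_n]_{(\lambda,\mu)}$. This immediately gives $d\leqslant d_0$ and shows that $\mathcal{V}$ contains at least one copy of the irreducible of type $(\lambda,\mu)$.

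For the multiplicity-one claim, I would first reduce to the coinvariant algebra. Projection onto $\K[\mathbf{x}_1,\ldots,\mathbf{x}_n]_{(\lambda,\mu)}$ commutes with the grading, so any element of $\mathcal{V}$ decomposes into homogeneous pieces each still lying in the isotypic component; the minimality of $d$ forces all lower-degree pieces to vanish, so $\mathcal{V}$ coincides with the degree-$d$ homogeneous part of $\K[\mathbf{x}_1,\ldots,\mathbf{x}_n]_{(\lambda,\mu)}$. Theorem~\ref{thm:chev1} then yields a graded isomorphism of $\K[\mathbf{x}_1,\ldots,\mathbf{x}_n]^{B_n}$-modules $\K[\mathbf{x}_1,\ldots,\mathbf{x}_n]_{(\lambda,\mu)}\simeq \K[\mathbf{x}_1,\ldots,\mathbf{x}_n]^{B_n}\otimes (\K[\mathbf{x}_1,\ldots,\mathbf{x}_n]_{B_n})_{(\lambda,\mu)}$, and since the invariant ring starts in degree~$0$, the minimum-degree behavior of the isotypic component matches that of the corresponding isotypic component of the coinvariant algebra. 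The higher Specht construction of~\cite{morita1998higher} produces an explicit homogeneous basis of $(\K[\mathbf{x}_1,\ldots,\mathbf{x}_n]_{B_n})_{(\lambda,\mu)}$ indexed by pairs of standard bitableaux of shape $(\lambda,\mu)$, in which exactly $s_{(\lambda,\mu)}$ basis elements attain the minimum degree~$d_0$, and these are precisely the classes of the Specht polynomials $\spe_{(T,S)}$. This simultaneously gives $d=d_0$, assertion~(a), and assertion~(b).

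For the final inclusion $\K[\mathbf{x}_1,\ldots,\mathbf{x}_n]_{(\lambda,\mu)}\subset I_{(\lambda,\mu)}$, I would exploit the additional structural feature of this basis: each higher Specht polynomial is a $B_n$-Specht polynomial $\spe_{(T,S)}$ multiplied by an explicit auxiliary polynomial. Lifting back through the CST isomorphism, every element of $\K[\mathbf{x}_1,\ldots,\mathbf{x}_n]_{(\lambda,\mu)}$ is a $\K[\mathbf{x}_1,\ldots,\mathbf{x}_n]$-linear combination of polynomials of the form $\spe_{(T,S)}$, which is exactly the ideal inclusion. The main obstacle is bookkeeping: translating the conventions of~\cite{morita1998higher}, who work in the wider setting of the complex reflection group $G(r,1,n)$, into the $B_n$-Specht polynomial conventions of Section~\ref{sec:Bn}, and in particular verifying that the minimal-degree basis elements in their construction agree up to nonzero scalar with the $\spe_{(T,S)}$ rather than with some twisted or conjugate version. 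Once this identification is in place, the multiplicity statement, the identification of the minimum-degree copy, and the ideal containment all follow at once.
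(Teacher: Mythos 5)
Your route is genuinely different from the paper's, and in principle it could be made to work, but as written it has a real gap: the two structural claims about the basis of \cite{morita1998higher} that carry all the weight are asserted rather than proved. Concretely, you need (a) that in the coinvariant algebra the minimal-degree graded piece of the $(\lambda,\mu)$-isotypic component consists of exactly one copy of the irreducible, spanned by the classical $B_n$-Specht polynomials (this requires knowing that a unique index bitableau attains the minimal charge and that the corresponding higher Specht polynomials reduce, up to nonzero scalar, to the $\spe_{(T,S)}$), and (b) that \emph{every} higher Specht polynomial of shape $(\lambda,\mu)$ is divisible by a classical Specht polynomial of the same shape, so that the freeness isomorphism of Theorem~\ref{thm:chev1} lifts to the ideal inclusion $\K[\mathbf{x}_1,\ldots,\mathbf{x}_n]_{(\lambda,\mu)}\subset I_{(\lambda,\mu)}$. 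Claim (b) is essentially the content of the proposition itself, and you defer exactly this identification to ``bookkeeping''; it is plausible (it should follow from the column antisymmetrization and the sign-character factor in the $G(r,1,n)$ construction), but until it is verified against the precise conventions of \cite{morita1998higher}, the argument outsources its core step to an unverified citation rather than proving it.

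By contrast, the paper proves the statement directly and without any external basis: for an arbitrary $B_n$-equivariant embedding $\phi$ of the Specht module, one shows $\phi(Q)$ is divisible by $Q=\spe_{(T,S)}$ by first using the sign changes $\mathbf{x}_i\mapsto-\mathbf{x}_i$ (forcing $\phi(Q)=\tilde{P}(\mathbf{x}^2)\prod_{i\in S}\mathbf{x}_i$) and then the column transpositions of $T$ and $S$ (forcing antisymmetry in the squared column variables, hence divisibility by the column Vandermonde factors). This single divisibility statement immediately gives the ideal inclusion for every copy, and multiplicity one in minimal degree falls out for free, since a polynomial of degree $d=\deg Q$ divisible by $Q$ is a scalar multiple of $Q$. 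If you want to keep your approach, you would need to either reprove such a divisibility fact for the higher Specht polynomials (at which point you have essentially reproduced the paper's argument) or quote from \cite{morita1998higher} a precise statement that delivers (a) and (b); note also that your Chevalley-based reduction to the coinvariant algebra, while correct, is extra machinery the elementary argument does not need.
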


\begin{proof}
Since $\K[\mathbf{x}_1,\ldots,\mathbf{x}_n]_{(\lambda,\mu)}$ is a direct sum of irreducible $B_n$-modules isomorphic to the standard $B_n$-Specht module, $W_{(\lambda, \mu)}$, it is enough to show that for every Specht polynomial $Q= \spe_{(T,S)}$ with $\sh (T,S) = (\lambda, \mu)$, and every $B_n$-isomorphism $\phi$, the polynomial
\[
P=\phi(Q)
\]
is divisible by $Q$. 
First, for every $i\in \{1, \ldots, n\}$, if $\epsilon_i$ is the map replacing $\mathbf{x}_i$ with $-\mathbf{x}_i$, since $\phi$ respects the action of $B_n$, we must have 
\[
\epsilon_i P = \begin{cases}
- P & \text{ if } i \in S
\\
P & \text{ if } i \notin S
\end{cases},
\]
which implies that $P$ is of the form
\[
P(\mathbf{x}) =  \tilde{P}(\mathbf{x}^2)\prod_{i\in S}\mathbf{x}_i. 
\]
Then, for every $\tau$ switching two elements in a same column of $T$ or $S$, we must have $\tau \tilde{P} = - \tilde{P}$, so that $P$ is divisible by $Q$.
\end{proof}

\begin{remark}
We remark that the statement about multiplicity $1$ of an irreducible representation in Proposition \ref{prop:B irred reps} does not apply in general. Consider the real reflection group $D_n \subset B_n$ which is generated by all permutations and those maps that switch an even number of signs. Then the $B_n$-irreducible representation of type $(\lambda,\mu)$ and $(\mu,\lambda)$ remain $D_n$-irreducible if $\lambda \neq \mu$, but are $D_n$-isomorphic. By Theorem~\ref{thm:equivalence} we have 
\[ I_{((2),(1,1))} \not \subset I_{((1,1),(2))} \quad \mbox{and} \quad I_{((1,1),(2))} \not \subset I_{((2),(1,1))}.\] Thus, no polynomial in the $B_n$-orbit of $(\mathbf{x}_3^2-\mathbf{x}_4^2)\mathbf{x}_3\mathbf{x}_4$ divides the polynomial $(\mathbf{x}_1^2-\mathbf{x}_2^2)\mathbf{x}_3\mathbf{x}_4$ although \[ \langle \spe_{(T,S)} : \sh (T,S) = ((2),(1,1)) \rangle_\K \simeq_{D_n} \langle \spe_{(T,S)} : \sh (T,S) = ((1,1),(2)) \rangle_\K. \] The $D_n$-irreducible representation $((2),(1,1))$ occurs for the first time in $\K[\mathbf{x}_1,\ldots,\mathbf{x}_4]_{\leq 4}$ but with multiplicity $2$.
\end{remark}

Combining Corollary \ref{cor:Ideals} with Proposition \ref{prop:B irred reps} we obtain the following:

\begin{theorem}
Let $I\subset\K[\mathbf{x}_1,\ldots,\mathbf{x}_n]$ be a $B_n$-invariant ideal satisfying the conditions of Corollary~\ref{cor:Ideals} and let $(\lambda,\mu)=\Gamma^*(m)$. Consider the the associated  coordinate ring
$$R_I=\K[\mathbf{x}_1,\ldots,\mathbf{x}_n]/I.$$
Then, viewed as a $\K[B_n]$-module, $R_I$ does not contain any irreducible $\K[B_n]$-submodule which is isomophic
to $W_{\lambda,\mu}$. 
Moreover, consider $$\tilde{I}=I\cap \K[\mathbf{x}_1,\ldots,\mathbf{x}_n]^{B_n}, \text{ and } R_{\tilde{I}}=\K[\mathbf{x}_1,\ldots,\mathbf{x}_n]^{B_n}/\tilde{I}$$  the corresponding quotient. Then, $R_I$ is a finite $R_{\tilde{I}}$ module  of rank bounded by $\sum_{(\vartheta,\omega)\not\unlhd(\lambda,\mu)} s_{\vartheta,\omega}^2$.
\end{theorem}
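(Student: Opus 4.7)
The argument combines the Specht-ideal inclusions established earlier with structural results from the invariant theory of reflection groups. For the first assertion: by Corollary~\ref{cor:Ideals} the hypothesis gives $I_{(\lambda,\mu)}\subset I$, and Proposition~\ref{prop:Specht Ideals inclusion} extends this to $I_{(\vartheta,\omega)}\subset I$ for every $(\vartheta,\omega)\unlhd(\lambda,\mu)$. Proposition~\ref{prop:B irred reps} then strengthens each such inclusion to $\K[\mathbf{x}_1,\ldots,\mathbf{x}_n]_{(\vartheta,\omega)}\subset I_{(\vartheta,\omega)}\subset I$. In particular the entire isotypic component of type $(\lambda,\mu)$ lies in $I$, so its image in $R_I$ is zero; hence $R_I$ contains no submodule isomorphic to $W_{(\lambda,\mu)}$.

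For the second assertion, the next step is to invoke the Chevalley--Shephard--Todd Theorem (Theorem~\ref{thm:chev1}): since $B_n$ is a reflection group, $\K[\mathbf{x}_1,\ldots,\mathbf{x}_n]$ is a free $\K[\mathbf{x}_1,\ldots,\mathbf{x}_n]^{B_n}$-module whose fiber $\K[\mathbf{x}_1,\ldots,\mathbf{x}_n]_{B_n}$ affords the regular representation. Combined with the isotypic decomposition~\eqref{eq:iso}, this forces each isotypic component $\K[\mathbf{x}_1,\ldots,\mathbf{x}_n]_{(\vartheta,\omega)}$ to be a free $\K[\mathbf{x}_1,\ldots,\mathbf{x}_n]^{B_n}$-module of rank $s_{\vartheta,\omega}^{2}$. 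Because $I$ is $B_n$-invariant, it respects this decomposition, and the analysis carried out for the first assertion shows that the summands indexed by $(\vartheta,\omega)\unlhd(\lambda,\mu)$ are entirely contained in $I$. Consequently
\[
R_I \;=\; \bigoplus_{(\vartheta,\omega)\not\unlhd(\lambda,\mu)} \K[\mathbf{x}_1,\ldots,\mathbf{x}_n]_{(\vartheta,\omega)}\big/\bigl(I\cap\K[\mathbf{x}_1,\ldots,\mathbf{x}_n]_{(\vartheta,\omega)}\bigr),
\]
and each summand is a quotient of a free $\K[\mathbf{x}_1,\ldots,\mathbf{x}_n]^{B_n}$-module of rank $s_{\vartheta,\omega}^{2}$, hence can be generated by at most $s_{\vartheta,\omega}^{2}$ elements.

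Summing over the surviving indices, $R_I$ admits a generating set of size at most $\sum_{(\vartheta,\omega)\not\unlhd(\lambda,\mu)} s_{\vartheta,\omega}^{2}$ over $\K[\mathbf{x}_1,\ldots,\mathbf{x}_n]^{B_n}$. Since $\tilde I\subset I$, the $\K[\mathbf{x}_1,\ldots,\mathbf{x}_n]^{B_n}$-action on $R_I$ factors through $R_{\tilde I}$, and the same set generates $R_I$ as an $R_{\tilde I}$-module, yielding both finiteness and the claimed rank bound. The only delicate point will be fixing the reading of the word \emph{rank}: I interpret it as the minimum number of $R_{\tilde I}$-generators, which is exactly what the above construction controls. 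Performing the bookkeeping upstairs over the free $\K[\mathbf{x}_1,\ldots,\mathbf{x}_n]^{B_n}$-module structure before descending to $R_{\tilde I}$ side-steps the fact that the quotient $R_{\tilde I}$-module $R_I$ is typically not free.
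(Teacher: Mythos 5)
Your proposal is correct and follows essentially the same route as the paper: Chevalley--Shephard--Todd freeness of $\K[\mathbf{x}_1,\ldots,\mathbf{x}_n]$ over the invariants together with a decomposition adapted to the isotypic components, combined with Proposition~\ref{prop:B irred reps} and the Specht-ideal inclusions to show that all components indexed by bipartitions bidominated by $(\lambda,\mu)$ die in $R_I$, whence the vanishing of the $W_{(\lambda,\mu)}$-isotypic part and the generator count $\sum_{(\vartheta,\omega)\not\unlhd(\lambda,\mu)} s_{\vartheta,\omega}^2$. The only cosmetic difference is that the paper fixes an explicit free basis respecting the decomposition (the higher Specht polynomials) and kills the relevant basis vectors, while you work directly with the isotypic decomposition of $R_I$; the content is the same.
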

\begin{proof}
 By Theorem \ref{thm:chev1} (3) we have that $\K[\mathbf{x}_1,\ldots,\mathbf{x}_n]$ is a free $\K[\mathbf{x}_1,\ldots,\mathbf{x}_n]^{B_n}$ module, i.e., it is possible to find a basis   $\mathcal{B}=\{b_1,\ldots,b_{m}\}$, with $m=|B_n|=2^{n}\cdot n!$, such that every $f\in\K[\mathbf{x}_1,\ldots,\mathbf{x}_n]$ can be uniquely represented with respect to $\mathcal{B}$ by  coefficients in $\K[\mathbf{x}_1,\ldots,\mathbf{x}_n]^{B_n}$. Now it follows directly that for any basis $\mathcal{B}$ we obtain a generating  set for $R_{I}$ over $R_{\tilde{I}}$ by  the elements $\phi_{I}(b_i)\in R_{I}$, where $\phi_I$ denotes the canonical homomorphism. Thus,  $\R_I$ is a finite module over $\R_{\tilde{I}}$ - not necessarily free. To establish the announced statements, we first remark that   we can assume that $\mathcal{B}$ respects the decomposition in \eqref{eq:iso}, i.e., that $\mathcal{B}$ can be partitioned in  such a way that for every $(\vartheta,\omega) \in \BP_n$ there exists a unique subset $\mathcal{B}_{(\vartheta,\omega)}\subset\mathcal{B}$ consisting of  $s_{\vartheta,\omega}^2$ many elements which generate the free module $\K[\mathbf{x}_1,\ldots,\mathbf{x}_n]_{(\vartheta,\omega)}$. Indeed, for example  the so called \emph{higher Specht polynomials} explicitly constructed in \cite{morita1998higher} constitute such a basis. Since Proposition  ~\ref{prop:B irred reps} and Corollary~\ref{cor:Ideals} yield that  $\K[\mathbf{x}_1,\ldots,\mathbf{x}_n]_{(\vartheta,\omega)}\subset I_{(\vartheta,\omega)}\subset I$ for all bipartitions bidominated by $(\lambda,\mu)$ we immediately see that $\phi_I(\mathcal{B}_{(\vartheta,\omega)})=\{0\}$ for all such bipartitions. 
Therefore, already the set \[\phi_{I}\left(\bigcup_{(\vartheta,\omega) \not \unlhd (\lambda,\mu)}\mathcal{B}_{(\vartheta,\omega)}\right)\] 
generates $R_{I}$, which directly yields the announced bound on the rank. Furthermore, since the action on $R/\tilde{I}$ is trivial we can conclude that the only irreducible representations which appear in $R/I$ are the ones  stemming from this basis and the associated  list of partitions, i.e., $R_I$ does not contain any irreducible $\K[B_n]$-submodule which is isomophic
to $W_{\vartheta,\omega}$, for bipartitions $(\vartheta,\omega)$ which are bidominated by $(\lambda,\mu)$. This completes the proof.  
\end{proof}

\section{Conclusion and open questions} \label{sec:ciao}

We initiated  in this article the investigation of a class of polynomial ideals which are naturally linked to the action of a group on a polynomial ring. Our results provide an analogue of the relation of the combinatorics of integer partitions and $\mathcal{S}_n$-Specht ideals to bipartitions and $B_n$-Specht ideals. The present work shows that it is indeed possible to derive an analogous connection between combinatorics and algebra for the case of the hyperoctahedral group as was observed in the case of the symmetric group. Both groups are finite reflection groups, and they thus share important similarities  from a view point of invariant theory and representation theory. Our results here lead to the natural question, if similar relations between integer (bi)-partitions and ideals can be derived for other (pseudo)-reflection groups. Indeed, in \cite{morita1998higher} a similar basis of the coinvariant algebra is provided for complex reflection groups of type $G(r,p,n)$, where $r,p,n \in \Z_{\geq 1}$ and $p \mid n$. We recover $G(1,1,n) \simeq \mathcal{S}_n$, $G(2,1,n) \simeq B_n$, and $G(2,2,n) \simeq D_n$. It seems plausible to envision similar results to the ones presented here in these cases as well. More precisely, that there is a partial order on $r$-multipartitions, which are linked  to  the irreducible representations of the complex reflection group $G(r,1,n)$, which transfers to  the inclusion of the $G(r,1,n)$-Specht ideals and their corresponding varieties. 
Furthermore, it remains to investigate if the $B_n$-Specht ideals also have similarly nice algebraic properties as their  $S_n$ counter parts. Indeed, it is known that the $\mathcal{S}_n$-Specht ideals are radical (see \cite[Theorem 1.1]{murai2021note} and \cite[Proposition 4]{woo2005ideals}). Both proofs rely on the understanding of the $\mathcal{S}_n$-Specht varieties in terms of orbit sets, i.e., Theorem \ref{thm: specht varieties and orbit types}, and crucially depend on the property that any $\mathcal{S}_n$-orbit set is non-empty which is not true for $B_n$-Specht varieties. Nevertheless, computational evidence for small number of variables motivates the conjecture, similar to the $S_n$ situation (\cite{lien2021symmetric,woo2005ideals,murai2021note,OW}).
\begin{conjecture}
The $B_n$-Specht ideals are radical. Moreover, for a bipartition $(\lambda,\mu) \in \BP_n$ the $B_n$-Specht polynomials $\{\spe_{(T,S)} : (T,S) \mbox{ is a bitableau of shape } (\vartheta,\omega) \unlhd (\lambda,\mu)\}$ form a universal Gröbner basis of $I_{(\lambda,\mu)}$.
\end{conjecture}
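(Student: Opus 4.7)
The plan is to address the two halves of the conjecture separately, with the radicality claim leveraging the orbit-set decomposition from Theorem~\ref{thm: specht varieties and orbit types}, and the universal Gröbner basis claim relying on a careful description of the leading terms of the $B_n$-Specht polynomials combined with a dimension count.

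For radicality, the first step is to identify the irreducible components of $V_{(\lambda,\mu)}$. By Theorem~\ref{thm: specht varieties and orbit types}, $V_{(\lambda,\mu)}$ is a union of non-empty orbit sets $H_{(\vartheta,\omega)}$ with $(\vartheta,\omega) \not\unlhd (\lambda,\mu)$, and the Zariski closure $\overline{H_{(\vartheta,\omega)}}$ of each such set is a finite union of coordinate subspaces cut out by equations of the form $\mathbf{x}_i \pm \mathbf{x}_j = 0$ and $\mathbf{x}_k = 0$, i.e., intersections of hyperplanes from the type $B$ arrangement. Since a union of linear subspaces is reduced, $V_{(\lambda,\mu)}$ is reduced as a set, and it is enough to show that $I_{(\lambda,\mu)}$ coincides with the vanishing ideal of this union rather than being strictly contained in it. For this I would combine Proposition~\ref{prop:Specht Ideals inclusion} with an inductive argument on $n$, intersecting $I_{(\lambda,\mu)}$ with hypersurfaces of the form $\mathbf{x}_n = 0$ or $\mathbf{x}_n = \mathbf{x}_{n-1}$ and relating the resulting quotients to $B_{n-1}$-Specht ideals attached to smaller bipartitions obtained from $(\lambda,\mu)$ by removing a corner box.

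The hard part in the radicality claim will be the possible emptiness of some orbit sets $H_{(\vartheta,\omega)}$, which breaks the direct analogue of the arguments of Woo \cite{woo2005ideals} and Murai--Ohsugi--Yanagawa \cite{murai2021note}. A promising way around this is to use the alternative $(t,\Lambda)$-decomposition described at the end of Section~\ref{sec:varieties}: the sets $G_{(t,\Lambda)}$ are always non-empty and index the components cleanly, and inclusions in the poset of bipartitions translate to inclusions in a product of dominance orders. Once the components are indexed this way, the inductive step $\mathbf{x}_n = 0$ naturally decreases $t$ by one and $\mathbf{x}_n = \mathbf{x}_{n-1}$ modifies $\Lambda$, paralleling the $\mathcal{S}_n$ induction in \cite{moustrou2021symmetric}. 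The delicate verification is that the resulting ideal obtained after restriction agrees with the $B_{n-1}$-Specht ideal of the expected shape, which requires controlling the signs coming from the factor $\prod_{k \in S} \mathbf{x}_k$ in the Specht polynomials.

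For the universal Gröbner basis claim, my plan is to first describe the leading term of $\spe_{(T,S)}$ under an arbitrary monomial order $\prec$. Since switching signs preserves the ideal up to sign, and permuting columns and within-column entries only affects $\spe_{(T,S)}$ by $\pm 1$, one can reduce to a situation where the leading monomial of $\spe_{(T,S)}$ is a product of even powers of the top entries of each column of $T\uplus S$, times a squarefree piece $\prod_{j\in S_{\mathrm{top}}} \mathbf{x}_j$ recording which of these top entries lie in $S$. I would then combine this with the corresponding $\mathcal{S}_n$ result for tableaux of shape $\lambda \uplus \mu$ applied in the doubled variables $\mathbf{x}^2$, via Lemma~\ref{lemma:spechtofsums}, to show that the family of leading monomials of $\{\spe_{(T,S)} : \sh(T,S) \unlhd (\lambda,\mu)\}$ generates the monomial ideal $\ini_\prec (I_{(\lambda,\mu)})$. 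The main obstacle will be an explicit straightening law on $B_n$-bitableaux showing that every monomial outside the proposed standard set is reducible modulo the leading terms; existing straightening rules for $B_n$-Specht modules in \cite{musili1993representations} together with the $\mathcal{S}_n$ straightening on $\mathbf{x}^2$ should provide the skeleton, but one must check that the argument remains order-independent so that the Gröbner basis claim is indeed universal.
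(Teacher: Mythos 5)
There is a fundamental mismatch here: the statement you are asked to prove is stated in the paper as an open \emph{conjecture} (Section~\ref{sec:ciao}), supported only by computational evidence in small numbers of variables; the paper offers no proof, and indeed explicitly explains why the known $\mathcal{S}_n$-arguments of \cite{woo2005ideals,murai2021note} break down, namely because some $B_n$-orbit sets $H_{(\vartheta,\omega)}$ are empty. Your text is a research plan rather than a proof: every load-bearing step is deferred (``I would combine\dots'', ``the delicate verification is\dots'', ``the main obstacle will be\dots''), and the steps you defer are exactly the open difficulties. For radicality, reducing to the claim $I_{(\lambda,\mu)} = I(V_{(\lambda,\mu)})$ is a legitimate strategy (and would indeed imply radicality even over a non-algebraically-closed $\K$), but the inductive restriction to $\mathbf{x}_n=0$ and $\mathbf{x}_n=\mathbf{x}_{n-1}$ and the identification of the resulting ideals with $B_{n-1}$-Specht ideals of predictable shapes is precisely where the $\mathcal{S}_n$ proofs required the full strength of the orbit-set nonemptiness; your suggestion to reindex by the pairs $(t,\Lambda)$ removes the emptiness issue at the level of set decompositions but does not by itself produce the needed statement about ideals, and you give no argument for it.

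For the universal Gr\"obner basis claim the gap is even more severe. Your reduction of the leading monomial of $\spe_{(T,S)}$ to ``even powers of the top entries of each column times a squarefree factor'' is not justified for an \emph{arbitrary} monomial order: under general orders the leading monomial of a product of Vandermonde factors in $\mathbf{x}^2$ times $\prod_{k\in S}\mathbf{x}_k$ depends on the order in a way that is exactly the content of the universality claim, and no $\mathcal{S}_n$ analogue you could cite settles it (the corresponding $\mathcal{S}_n$ statement is itself only conjectural or partially known in the literature the paper cites). Likewise, the straightening law you invoke from \cite{musili1993representations} concerns the module structure of the span of Specht polynomials, not membership of leading terms in $\ini_\prec(I_{(\lambda,\mu)})$ for every $\prec$; bridging that gap is the whole problem. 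In short, there is no proof in the paper to compare against, and your proposal, while it identifies reasonable lines of attack, leaves the essential steps unproved, so it does not establish the statement.
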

Finally, Yanagawa \cite{yanagawa2021specht} classified the partitions for which the associated $S_n$-Specht ideals are Cohen-Macaulay and it would be interesting to derive a similar characterization of bipartitions $(\lambda,\mu)$ for which the corresponding  $B_n$-Specht ideals have this property. 
\section*{Acknowledgements}
The authors would like to thank two anonymous reviewers for their careful reading of the manuscript, insightful suggestions and comments. 
\printbibliography
\end{document}